\documentclass[12pt,a4paper]{amsart}
\usepackage[utf8]{inputenc}
\usepackage[T1]{fontenc}
\usepackage[foot]{amsaddr}
\usepackage{amsmath}
\usepackage{amssymb}
\usepackage{amsfonts}
\usepackage{amsthm}
\usepackage[all]{xy}
\usepackage{tikz}
\usepackage{hyperref}
\usepackage[shortlabels]{enumitem}
\usepackage{multicol}
\usepackage[enableskew]{youngtab}
\usepackage{ytableau}
\usepackage{multicol}
\usepackage{bbold}
\usepackage{quiver}
\usepackage{geometry}
\usepackage{tabularray}
\usepackage{geometry}
\usepackage{apptools}
\usepackage{xcolor}

\theoremstyle{plain}
\newtheorem*{atw*}{Theorem}
\newtheorem{atw}{Theorem}[section]
\newtheorem*{lemma*}{Lemma}
\newtheorem{lemma}[atw]{Lemma}
\newtheorem{pro}[atw]{Proposition}
\newtheorem*{pro*}{Proposition}
\newtheorem{cor}[atw]{Corollary}
\newtheorem*{cor*}{Corollary}

\theoremstyle{definition}
\newtheorem{adf}[atw]{Definition}
\newtheorem*{adf*}{Definition}

\theoremstyle{remark}
\newtheorem{rem}[atw]{Remark}
\newtheorem*{rem*}{Remark}
\newtheorem{ex}[atw]{Example}
\newtheorem*{ex*}{Example}

\def\H{\operatorname{Hilb}}
\newcommand{\Hilb}[1]{\H_{{#1}}}

\newcommand{\Rep}{\operatorname{Rep}}
\newcommand{\qq}{\mathfrak q}
\newcommand{\ch}{\operatorname{ch}}
\newcommand{\eu}{\operatorname{eu}}
\newcommand{\td}{\operatorname{td}}
\newcommand{\ZZ}{\mathbb{Z}}
\newcommand{\NN}{\mathbb{N}}
\newcommand{\QQ}{\mathbb{Q}}

\newcommand{\CC}{\mathbb{C}}
\newcommand{\Aa}{\mathbb{A}}
\newcommand{\TT}{\mathbb{T}}

\newcommand{\Tt}{{\mathbb{T}_y}}
\newcommand{\HH}{\mathbb{H}}
\newcommand{\Pcoh}[2]{{P^{#2}_{#1}}}
\newcommand{\PKT}[2]{{\mathcal{P}^{#2}_{#1}}}
\def\O{\mathcal{O}}
\def\L{\mathcal{L}}
\def\V{\mathcal{V}}
\def\Q{\mathcal{Q}}

\newcommand{\xtto}[1]{\stackrel{#1}{\longrightarrow}}

\newcommand{\mono}{\hookrightarrow}
\newcommand{\onto}{\twoheadrightarrow}

\newcommand{\rank}{\operatorname{rank}}
\newcommand{\supp}{\operatorname{supp}}
\newcommand{\coh}{\operatorname{H}}
\newcommand{\Pic}{\operatorname{Pic}}
\DeclareMathOperator{\KTh}{K}
\DeclareMathOperator{\Hom}{Hom}

\geometry{margin=1in}

\title{Nakajima's creation operators and the Kirwan map}
\author{Jakub Koncki}
\address{Institute of Mathematics, Polish Academy of Sciences, Poland}
\address{Institute of Mathematics, University of Warsaw, Poland}
\email{j.koncki@mimuw.edu.pl}

\author{Magdalena Zielenkiewicz}
\address{Institute of Mathematics, University of Warsaw, Poland}
\email[Corresponding author]{magdaz@mimuw.edu.pl}

\begin{document}
\ytableausetup{smalltableaux}
\begin{abstract}
	 	We consider the Hilbert scheme of points in the affine complex plane. We find explicit formulas for the Nakajima's creation operators and their $\KTh$-theoretic counterparts in terms of the Kirwan map. We obtain a description of the action of Nakajima's creation operators on the Chern classes of the tautological bundle. 
\end{abstract}
\maketitle

\section{Introduction}
The Hilbert scheme of $n$ points in the complex plane $\Hilb{n} := \Hilb{n}(\Aa_{\CC}^2)$ is a quasi-projective variety parametrizing zero-dimensional subschemes of degree $n$ in the complex affine plane $\Aa_{\CC}^2$. It is a well-studied object, and a lot is known about its geometry. Many results on the Hilbert scheme of points in the plane have non-obvious applications elsewhere, from geometry \cite{Boissiere2, NY} to algebra and representation theory \cite{GorS1, Wei, BG}, mathematical physics \cite{Carl, Gorsky} and combinatorics \cite{Haiman2, Haiman}. While it has been intensively studied ever since the 1960's, it is still an active research topic \cite{OR, Pandha, Przez} and there remain open questions to be addressed. \\

Many connections between the Hilbert scheme of points in the plane and other branches of mathematics have been unearthed by the Nakajima--Grojnowski \cite{Nak1, Gro} milestone result which endowed the sum of all the rational homology groups with a structure of a representation of the Heisenberg algebra. Such a structure can be obtained by constructing certain operators on homology, which satisfy the commutation relations of the Heisenberg algebra. In Nakajima's construction such operators, called creation and annihilation operators, are given by correspondences defined by subvarieties in the nested Hilbert scheme. Originally defined as operators in homology, one can extend them to equivariant cohomology using a similar construction, see \cite{Evain}. The situation is more complicated for $\KTh$-theory, as pointed out already by Nakajima (see \cite[Question 8.35]{Nak1}). There are, however, generalizations of the Nakajima--Grojnowski result to $\KTh$-theory.
In \cite{FT}, the actions of the shuffle and the Ding-Iohara algebras are constructed on the equivariant $\KTh$-theory. In turn, in \cite{SV}, a subalgebra of the convolution algebra in the equivariant $\KTh$-theory of $\Hilb{n}$ is studied and is identified with the elliptic Hall algebra. \\

Most of this paper is devoted to $\KTh$-theory of the Hilbert scheme of points in the plane, and the final chapters present implications of the $\KTh$-theoretical results to cohomology. While equivariant cohomology of the Hilbert scheme of points has been extensively studied (see e.g. \cite{ES,Lehn,Evain,Cheah2, GS}), there remain questions which do not have a satisfactory answer. One such question, stated in \cite[Question 9.6]{Nak1}, is how do the creation operators act on the Chern classes of the tautological bundle on the Hilbert scheme. We present an answer to this question. A similar problem is considered in \cite{Lehn}. There Chern classes of the tautological bundle are expressed in Nakajima's basis. Results presented here and in \cite{Lehn} are connected, yet not directly comparable, see Remark \ref{rem:Lehn} for a detailed discussion. \\

Our result is an expression for the action of the Nakajima's operators $\qq_i$ on the characteristic classes of the tautological bundle $\V_n$ in terms of the Kirwan map
$$\kappa: \ZZ[x_0, \dots, x_{n-1}]^{\Sigma_{n}} \to \coh^{\ast}(\Hilb{n})\,.$$
Here $\Sigma_{n}$ denotes the symmetric group and the map $\kappa$ sends variables $x_i$ to the Chern roots of the tautological bundle $\V_n$. Chern classes of $\V_n$ are images of the elementary symmetric polynomials.
We prove a formula for their images under $\qq_1$, expressed in the Chern classes of the tautological
bundle $\V_{n+1}$ as well as the power-sum polynomials of its Chern roots $\Pcoh{k}{n+1}$

$$\qq_1(c_k(\V_n))=\sum_{m=0}^k (-1)^m(m+1)\cdot c_{k-m}(\V_{n+1})\cdot \Pcoh{m}{n+1}\,. $$
It turns out that the power sum basis of the symmetric polynomial ring is better suited for our formulas. Our main result is presented below.
\begin{atw*}[\ref{tw:qm}]
	Let $\lambda=(\lambda_1,\dots,\lambda_l)$ be a sequence of nonnegative integers and $m$ a positive integer. For a subset $A\subseteq\{1,\dots,l\}$, let $\lambda_A$ be the sequence obtained from $\lambda$ by removing indices corresponding to elements of $A$ and
	$l(A):=\sum_{i\in A}\lambda_i\,.$ Then
	\begin{align*}
		\qq_m(\Pcoh{\lambda}{n})=(-1)^{m+1}\cdot\sum_{A\subseteq\{1,\dots,l\}}(-m)^{|A|}\cdot(l(A)+m)\cdot\Pcoh{{\lambda_A}}{n+m}\cdot\Pcoh{l(A)+m-1}{n+m}\,.
	\end{align*}
In particular for a nonegative integer $k\ge 0$ we have
\begin{align*}
	\qq_m(\Pcoh{k}{n})=(-1)^{m+1}\cdot m\cdot\left(\Pcoh{k}{n+m}\cdot\Pcoh{m-1}{n+m}-(m+k)\cdot\Pcoh{k+m-1}{n+m}\right)\,.
\end{align*}
\end{atw*}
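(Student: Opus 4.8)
The strategy is to deduce the formula from the explicit description of $\qq_m$ in terms of the Kirwan map obtained earlier, by evaluating it on the product of power sums $\Pcoh{\lambda}{n}=\kappa\!\left(\prod_{i=1}^{l}p_{\lambda_i}\right)$, where $p_{\lambda_i}$ is the $\lambda_i$-th power sum in the Chern roots $x_0,\dots,x_{n-1}$ of $\V_n$. Concretely, I would package that description as a single generating-function identity. Introducing one auxiliary variable $u$ recording the Chern roots of the length-$m$ subscheme created by $\qq_m$ (concentrated at a point, hence all equal to $u$), and letting $\rho$ be the $\coh^{\ast}(\Hilb{n+m})$-linear map sending $u^{r}$ to $\Pcoh{r}{n+m}$, I claim
\[
\qq_m(\kappa(f))=(-1)^{m+1}\,\rho\!\left[\frac{\partial}{\partial u}\Big(u^{m}\cdot f[X_{n+m}-m\,u]\Big)\right],
\]
where $X_{n+m}$ denotes the $n+m$ Chern roots of $\V_{n+m}$ and $f[X_{n+m}-m\,u]$ is the plethystic substitution rewriting the symmetric function $f$ of the $n$ old roots in the new roots with the $m$ created ones subtracted off. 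On power sums this substitution reads $p_j\mapsto \Pcoh{j}{n+m}-m\,u^{j}$, which is exactly the assertion that the $m$ new roots all equal $u$.

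Granting this, the proof of Theorem \ref{tw:qm} is a direct expansion. Applying the substitution factor by factor gives
\[
\prod_{i=1}^{l}\big(\Pcoh{\lambda_i}{n+m}-m\,u^{\lambda_i}\big)=\sum_{A\subseteq\{1,\dots,l\}}(-m)^{|A|}\,u^{\,l(A)}\prod_{i\notin A}\Pcoh{\lambda_i}{n+m}\,,
\]
since choosing, for each index, either the $\Pcoh{\lambda_i}{n+m}$-term or the $-m\,u^{\lambda_i}$-term is recorded by the subset $A$ of indices contributing the latter, with $l(A)=\sum_{i\in A}\lambda_i$. Multiplying by $u^{m}$, differentiating, and using $\frac{\partial}{\partial u}u^{\,l(A)+m}=(l(A)+m)\,u^{\,l(A)+m-1}$, then applying $\rho$ (which sends $u^{\,l(A)+m-1}$ to $\Pcoh{l(A)+m-1}{n+m}$ and treats the factors $\Pcoh{\lambda_i}{n+m}$ as coefficients) yields precisely the claimed expression for $\qq_m(\Pcoh{\lambda}{n})$. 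The special case $\lambda=(k)$, $l=1$ is immediate, taking $A\in\{\emptyset,\{1\}\}$ and reading $\Pcoh{\emptyset}{n+m}$ as the empty product $1$.

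The real content, and the step I expect to be hardest, is establishing the generating-function identity with the correct normalization: the sign $(-1)^{m+1}$, the coefficient $-m$ in the plethystic substitution, and above all the composite rule $u^{j}\mapsto (-1)^{m+1}(j+m)\,\Pcoh{j+m-1}{n+m}$, that is, the weight $(j+m)$ together with the index shift by $m-1$. Geometrically this weight is the push-forward along the correspondence defining $\qq_m$ of the tautological class against the fundamental class of the punctual Hilbert scheme of length $m$ at a point, and computing this integral — where the created roots degenerate to the common value $u$ — is the technical crux. I would pin down the normalization by two checks that double as partial proofs. First, specializing to $m=1$ and $f=e_k=c_k(\V_n)$ must reproduce the introduction's formula: writing $\sum_k c_k(\V_n)t^k=\prod_i(1+x_it)$ one computes $\frac{\partial}{\partial u}\big(u\prod_i(1+x_it)/(1+ut)\big)=\prod_i(1+x_it)\cdot(1+ut)^{-2}$, and since $\rho[(1+ut)^{-2}]=\sum_j(-1)^j(j+1)\Pcoh{j}{n+1}t^j$, the coefficient of $t^k$ is exactly $\sum_{m=0}^{k}(-1)^m(m+1)c_{k-m}(\V_{n+1})\Pcoh{m}{n+1}$. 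Second, if the direct push-forward proves unwieldy, general $m$ can be reached from $m=1$ through the Lehn-type commutation of $\qq_m$ with cup product by $c_1(\V)$, reducing everything to the $\qq_1$ computation just verified. The remaining point requiring care is to justify that the formal operations — plethystic substitution, differentiation in $u$, and the realization $\rho$ — are compatible with the genuine cohomology operation, in particular that $\rho$ is well defined independently of the chosen presentation.
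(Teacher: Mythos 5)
Your generating-function reformulation is a clean repackaging of the answer, and your two computations that are actually carried out are correct: the expansion of $u^m\prod_i(\Pcoh{\lambda_i}{n+m}-m\,u^{\lambda_i})$ after $\partial/\partial u$ does reproduce the stated formula for $\qq_m(\Pcoh{\lambda}{n})$, and your $m=1$, $f=e_k$ check agrees with Proposition \ref{tw:q1}. But this only shows that your master identity is \emph{equivalent} to Theorem \ref{tw:qm}; it does not prove it. The entire content of the theorem for $m\ge 2$ is concentrated in the step you defer: either a direct push-forward along the correspondence $Q_m^n\subset\Hilb{n}\times\Hilb{n+m}$ (which is genuinely hard, since $\Hilb{n,n+m}$ is singular for $m\ge 2$ and the paper deliberately avoids it), or the ``Lehn-type commutation'' reduction to $\qq_1$. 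The latter is exactly the paper's route, and it is not a one-line reduction: one needs (i) an explicit formula for the action of the auxiliary operator $\rho=-\pi_*(c_1(\Q_n)\cdot p^*(-))$ on the power-sum basis (Proposition \ref{pro:rho}, itself a consequence of the push-forward computation $\pi_*(c_1(\Q_n)^m)=(m+1)\Pcoh{m}{n+1}$), and (ii) a nontrivial verification that the claimed closed formula is stable under the recursion $\qq_{m+1}=\tfrac{1}{m}(\rho\circ\qq_m-\qq_m\circ\rho)$ of Theorem \ref{tw:Evain}; in the paper this occupies the proof of Theorem \ref{tw:qm} together with the combinatorial identity of Lemma \ref{lem:qm}, which makes the cross terms indexed by pairs of disjoint subsets $A,B$ collapse to the single-subset sum with weight $(-m-1)^{|C|}$. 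None of this is present in your proposal, so the inductive step is a genuine gap rather than a routine verification.

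Two smaller points. First, your heuristic that the $m$ created Chern roots ``all equal $u$'' is suggestive but is not a proof of the substitution rule $p_j\mapsto\Pcoh{j}{n+m}-m\,u^j$ together with the weight $u^j\mapsto(j+m)\Pcoh{j+m-1}{n+m}$ and the sign $(-1)^{m+1}$; these normalizations are precisely what the induction establishes. Second, your realization map $\rho$ collides notationally with the paper's auxiliary operator $\rho$, which is the very operator you would need to invoke to close the argument; keeping them apart would already make clear that the commutator computation still has to be done.
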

To calculate images of the Chern classes one needs to change basis in the symmetric polynomial ring.
For the operator $\qq_1$ we obtain an equivariant version (with respect to the one dimensional coordinate torus) of the above formulas.

Moreover, we prove their K-theoretical analogue. Let $\qq^{\KTh}_{1,m}$ be the creation operators in the Ding-Iohara algebra, or the elliptic Hall algebra (see Section \ref{s:Nakoperators}).
\begin{pro*}[\ref{pro:qK}]
	The following holds in nonequivariant K-theory
		$$\qq^{\KTh}_{1,m}(\PKT{k}{n})=\PKT{k}{n+1}\cdot\big(m\PKT{m}{n+1}-(m-1)\PKT{m-1}{n+1}\big)-(k+m)\PKT{k+m}{n+1}+(k+m-1)\PKT{k+m-1}{n+1}\,.$$ 
\end{pro*}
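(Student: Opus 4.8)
The plan is to prove Proposition~\ref{pro:qK} geometrically, by writing $\qq^{\KTh}_{1,m}$ as a twisted correspondence on the nested Hilbert scheme and then reducing the whole identity, through the projection formula, to one universal pushforward formula for powers of the tautological line bundle. Let $\NHilb{n}{n+1}$ denote the nested Hilbert scheme of pairs $I'\subset I$ of colength one, with the two projections $p\colon\NHilb{n}{n+1}\to\Hilb{n}$ and $q\colon\NHilb{n}{n+1}\to\Hilb{n+1}$, and let $\L$ be the tautological line bundle whose fibre is the length-one quotient. According to the definition of the creation operators in Section~\ref{s:Nakoperators}, and up to the normalization fixed there, $\qq^{\KTh}_{1,m}(\alpha)=q_*\!\big(p^*\alpha\cdot[\L^{\otimes m}]\big)$. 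The first ingredient is the K-theoretic tautological sequence $q^*\V_{n+1}=p^*\V_n+[\L]$; since the power-sum class $\PKT{k}{n}$ is the $k$-th power sum of the Chern roots of $\V_n$ (equivalently the Adams operation $\psi^k(\V_n)$), which is additive and sends a line bundle to its $k$-th tensor power, this yields
\begin{align*}
	p^*\PKT{k}{n}=q^*\PKT{k}{n+1}-[\L^{\otimes k}].
\end{align*}

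Substituting this into the definition and applying the projection formula to pull $q^*\PKT{k}{n+1}$ through $q_*$ gives
\begin{align*}
	\qq^{\KTh}_{1,m}(\PKT{k}{n})=\PKT{k}{n+1}\cdot q_*\!\big([\L^{\otimes m}]\big)-q_*\!\big([\L^{\otimes(k+m)}]\big).
\end{align*}
Hence the entire statement reduces to a single computation of $q_*\!\big([\L^{\otimes p}]\big)$ for $p\ge 1$. I claim that
\begin{align*}
	q_*\!\big([\L^{\otimes p}]\big)=p\,\PKT{p}{n+1}-(p-1)\,\PKT{p-1}{n+1}.
\end{align*}
Granting this, taking $p=m$ and $p=k+m$ and substituting back reproduces the asserted formula verbatim: the first term becomes $\PKT{k}{n+1}\big(m\PKT{m}{n+1}-(m-1)\PKT{m-1}{n+1}\big)$ and the second becomes $(k+m)\PKT{k+m}{n+1}-(k+m-1)\PKT{k+m-1}{n+1}$, which enters with a minus sign.

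To establish the pushforward formula I would compute $q_*\!\big([\L^{\otimes p}]\big)$ by $\TT$-localization on $\NHilb{n}{n+1}$ and then specialize to nonequivariant K-theory. The $\TT$-fixed points lying over a partition $\mu\vdash n+1$ are indexed by the removable boxes of $\mu$; at each such box $\L$ restricts to the corresponding torus weight, and the contribution is governed by the known Euler classes of the nested scheme. The formula then amounts to matching the full box-sum restriction $\sum_{\square\in\mu}\big(p\,w(\square)^p-(p-1)\,w(\square)^{p-1}\big)$ of the right-hand side against the sum over removable boxes weighted by Euler classes. Alternatively, the same pushforward can be read off from the explicit description of $q_*$ through the K-theoretic Kirwan map used for Theorem~\ref{tw:qm}, where pushing forward along $q$ corresponds to adjoining the extra Chern root of $\L$ and extracting the appropriate residue.

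The step I expect to be the main obstacle is precisely this pushforward identity for $q_*\!\big([\L^{\otimes p}]\big)$: one must pin down the exact normalization of the twist defining $\qq^{\KTh}_{1,m}$ in the elliptic Hall / Ding--Iohara presentation and then match it against the power-sum classes, where the integer coefficients $p$ and $p-1$ emerge only after the Euler-class cancellation (respectively, the symmetrization and residue) has been carried out. Everything else — the tautological sequence, the additivity of the power-sum classes, and the projection formula — is formal bookkeeping.
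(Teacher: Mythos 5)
Your reduction is exactly the paper's proof of Proposition \ref{pro:qK}: the tautological sequence gives $p^*\PKT{k}{n}=\pi^*\PKT{k}{n+1}-[\Q_n^{k}]$ (Proposition \ref{pro:q1}), the projection formula then yields $\qq^{\KTh}_{1,m}(\PKT{k}{n})=\PKT{k}{n+1}\cdot\pi_*[\Q_n^{m}]-\pi_*[\Q_n^{k+m}]$, and the statement follows by substituting the pushforward formula $\pi_*[\Q_n^{p}]=p\,\PKT{p}{n+1}-(p-1)\,\PKT{p-1}{n+1}$. Up to that point the argument is correct and identical in structure; the paper simply cites this last formula as Corollary \ref{cor:pushnoneq}.

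Where you diverge is in how you propose to establish that pushforward formula, and there your sketch has a genuine gap. You suggest verifying $\pi_*[\Q^{\otimes p}]=p\,\PKT{p}{}-(p-1)\,\PKT{p-1}{}$ for all $p$ by $\TT$-localization, matching the restriction of the right-hand side at each fixed partition against the sum over removable boxes weighted by Euler-class ratios. This cannot work as stated, because the identity is \emph{not} the shadow of a $\TT$-equivariant one: the paper records that already for $p=2$ the $\TT$-equivariant pushforward contains an extra term $(1-q^{-1})(1-t^{-1})[\Lambda^2\V]$, which vanishes only after forgetting the torus action. So a box-by-box comparison of $\sum_{\square}\big(p\,w(\square)^p-(p-1)\,w(\square)^{p-1}\big)$ with the localization sum will simply fail in $\ZZ[q^{\pm},t^{\pm}]$, and there is no honest ``specialize to nonequivariant'' step available at the level of individual fixed points (the Euler classes degenerate under $q\to 1$). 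The paper avoids this by doing localization only for $p=1$ (Proposition \ref{pro:m1}, which already requires the combinatorial analysis of the appendix) and then bootstrapping to all $p$ via Adams--Riemann--Roch (Theorem \ref{tw:ARR}) in the $\Tt$-equivariant theory, where the relative tangent bundle of $\pi$ has the clean form of Lemma \ref{lem:Adams1} and the Bott classes are computable; the nonequivariant Corollary \ref{cor:pushnoneq} is then obtained by forgetting the $\Tt$-action from Theorem \ref{tw:push}. To close your argument you should either cite Corollary \ref{cor:pushnoneq} directly or reproduce that Adams--Riemann--Roch step; the direct all-$p$ localization you outline would need a substantially more delicate summation argument than the one you describe.
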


We first prove theorem \ref{tw:qm} for the operator $\qq_1$ (Proposition \ref{tw:q1}). For higher $\qq_i$,  \cite[Theorem 34]{Evain}, \cite[Proposition 3.12]{Lehn} gives a recursive formula using the auxiliary operator~$\rho$.
We prove an explicit formula describing the action of $\rho$ on the power sum basis in Proposition \ref{pro:rho}. This allows to extend our result on $\qq_1$ to all the operators~$\qq_m$. \\
	
The tautological bundle $\Q_n$ on the nested Hilbert scheme $\Hilb{n,n+1}$ plays a crucial role in our proofs. The operator $\qq_1$ is fully determined by pushforwards of certain classes of this bundle. We study these classes in the torus equivariant $\KTh$-theory. We use equivariant Grothendieck--Riemann--Roch theorem \cite{EGGRR} to transport our results to the torus equivariant cohomology. As a corollary we obtain formulas in the nonequivariant theories.
\begin{atw*}[\ref{tw:push} and \ref{tw:pushH}]
	Let $\pi: \Hilb{n,n+1}(\Aa_{\CC}^2) \to \Hilb{n+1}(\Aa_{\CC}^2)$ be the projection. For an integer $m$  We have
	$$\pi_*[\Q^{m}_n]=
	\frac{1-t^{-m}}{1-t^{-1}}\cdot \PKT{m}{}
	-\frac{1-t^{-(m-1)}}{1-t^{-1}}\cdot\frac{1}{t}\cdot\PKT{m-1}{}\in \KTh_{\Tt}(\Hilb{n+1}(\Aa_{\CC}^2))\,.$$
	 Moreover
	$$\pi_*\left(c_1(\Q_n)^m\right)=\sum_{k=0}^{m}a_{k,m}\cdot t^{m-k}\cdot \Pcoh{k}{n+1} \in \coh_{\Tt}^{2m}(\Hilb{n+1}(\Aa_{\CC}^2)) \,, $$
	where the $a_{k,m}$ are the coefficients of the polynomial
	$$\sum_{k=0}^{m}a_{k,m}\cdot x^k=x^m(x+1)-(x-1)^mx\,. $$
\end{atw*}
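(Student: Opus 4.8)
The plan is to establish the $\KTh$-theoretic formula first and then transport it to cohomology by equivariant Grothendieck--Riemann--Roch, exactly as indicated in the introduction. The structural backbone is the short exact sequence
\[
0 \to \Q_n \to \pi^*\V_{n+1} \to p^*\V_n \to 0
\]
on $\NHilb{n}{n+1}$, where $p\colon \NHilb{n}{n+1}\to\Hilb{n}$ is the other projection and the fibre of $\Q_n$ over $(Z_n\subset Z_{n+1})$ is $I_{Z_n}/I_{Z_{n+1}}$. Since $\Q_n$ is a line bundle, this sequence yields $\pi^*\Pcoh{k}{n+1}=p^*\Pcoh{k}{n}+c_1(\Q_n)^k$ in cohomology and the analogous additive relation for the power sums $\PKT{k}{}$ in $\KTh$-theory; these relations pin down the shape of the pushforward and will be used to recognise the final answer.

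First I would compute $\pi_*[\Q_n^m]$ by equivariant localisation for the full torus $\TT$, and only afterwards restrict the equivariant parameters to $\Tt$. The $\TT$-fixed points of $\Hilb{n+1}$ are the monomial ideals indexed by partitions $\mu\vdash n+1$, and the fixed points of $\NHilb{n}{n+1}$ lying over $\mu$ are the pairs $(\mu\setminus\square,\mu)$ with $\square$ a removable corner of $\mu$. The key geometric input is that the fibre $\pi^{-1}(\mu)$ is the projectivisation $\mathbb{P}(\mathrm{soc}\,\mathcal{O}_{Z_\mu})$ of the socle, whose torus-fixed points are exactly the corners, that under this identification $\Q_n$ restricts to $\mathcal{O}(-1)$, and that $\Q_n|_{(\mu\setminus\square,\mu)}$ has weight the character $\chi(\square)$ of the corner monomial. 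Consequently the localisation contribution of $\mu$ is a sum over corners $\square$ of $\chi(\square)^m$ weighted by the inverse equivariant Euler class of the fibre directions, i.e. a weighted projective-space integral of $\mathcal{O}(-m)$.

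The heart of the argument is the evaluation of this corner sum. Writing $w_r=\chi(\square_r)$ for the corner characters, the fibrewise integral is the Lagrange-interpolation sum $\sum_r w_r^{m}\big/\prod_{r'\neq r}(1-w_r/w_{r'})$, which collapses to a complete-homogeneous symmetric expression in the $w_r$; the geometric-series factors $\tfrac{1-t^{-m}}{1-t^{-1}}$ and $\tfrac{1-t^{-(m-1)}}{1-t^{-2}}$ are precisely the shape produced by such projective-space pushforwards of line-bundle powers. The remaining, and genuinely delicate, step is to show that this expression, naturally written in terms of the \emph{corner} characters, coincides with the claimed combination of $\PKT{m}{}$ and $\PKT{m-1}{}$, which are sums over \emph{all} boxes of $\mu$. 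I expect this to be the main obstacle: it amounts to a telescoping identity along the rows and columns of $\mu$ that converts corner data into full box sums (the factor $\tfrac{1-t^{-m}}{1-t^{-1}}=1+t^{-1}+\dots+t^{-(m-1)}$ being exactly a geometric series along a column), together with the cancellation of all dependence on the $\Tq$-equivariant parameter, which is what makes the answer lie in $\KTh_{\Tt}$. The additive relations of the first paragraph give an independent check and help organise this bookkeeping.

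Finally, the cohomological statement \ref{tw:pushH} would be obtained by feeding the $\KTh$-theoretic identity into equivariant Grothendieck--Riemann--Roch, $\ch(\pi_*[\Q_n^m])=\pi_*\big(\ch[\Q_n^m]\cdot\td(T_\pi)\big)$, and extracting the component computing $\pi_*(c_1(\Q_n)^m)$; equivalently one runs the same localisation directly in equivariant cohomology. Here the fibre is again a projective space, $\ch[\Q_n^m]=e^{m c_1(\Q_n)}$, and the relative Todd class of $\mathbb{P}(\mathrm{soc})$ supplies the universal denominators; carrying out the projective-space integral turns these into the single polynomial $x^m(x+1)-(x-1)^m x$, whose coefficients are the asserted $a_{k,m}$, with the power of $t$ tracking the codimension shift $t^{m-k}$. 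Once the $\KTh$-theoretic identity is in hand this last step is essentially bookkeeping, so the combinatorial identity of the previous paragraph remains the crucial point.
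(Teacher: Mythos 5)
Your outline has the right general scaffolding (localization plus equivariant GRR), but the step you yourself flag as ``the main obstacle'' --- evaluating the corner sum and converting it into the box sums $\PKT{m}{}$, $\PKT{m-1}{}$ --- is exactly the content of the theorem, and your proposed mechanism for it does not work as stated. The map $\pi$ is not a projective bundle (the fibre over a generic point of $\Hilb{n+1}$ is a single point, while over a monomial ideal it is $\mathbb{P}^{\#C(\mu)-1}$), so the localization denominators are \emph{not} the Lagrange-interpolation factors $\prod_{r'\neq r}(1-w_r/w_{r'})$ of a fibrewise projective-space integral. The actual Euler-class ratio $r_{\mu,c}$ is a product over all boxes in the row and the column of the corner $c$, involving their arm and leg lengths (this is worked out in the paper's appendix), and taming it is already nontrivial for $m=1$. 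Moreover, the hoped-for disappearance of the $q$-parameter is not mere bookkeeping: the paper remarks that the genuine $\TT$-equivariant pushforward of $\Q^2$ contains an extra term $(1-q^{-1})(1-t^{-1})[\Lambda^2\V]$ and does not generalize easily, so the clean formula is intrinsically a $\Tt$-statement and a ``compute over $\TT$ first'' strategy must first confront a strictly more complicated answer.

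The paper's route is different and sidesteps the corner-sum combinatorics for $m\ge 2$ entirely. It performs the localization computation only for $m=1$ (Proposition \ref{pro:m1}, itself a nontrivial induction on stripping the first column of the partition, using limits as $q\to 0$ and $q\to\infty$), and then obtains all powers at once from the Adams--Riemann--Roch theorem: since $\Q^m=\psi^m(\Q)$, and since over the one-dimensional torus $\Tt$ the virtual relative tangent bundle at a fixed point is $\CC_t-\CC_{(k+1)t}$, the Bott class $\theta^m(T_\pi^*)$ has the closed form $\theta^m(t^{-1})/\theta^m((t\Q)^{-1})$; combining this with the identity of Lemma \ref{lem:Adams2} and $\psi^m(\PKT{1}{})=\PKT{m}{}$ gives Theorem \ref{tw:push} directly. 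For the cohomological statement, GRR only shows that $\pi_*(c_1(\Q_n)^m)$ is some $n$-independent combination $\sum_k a_{k,m}t^{m-k}\Pcoh{k}{n+1}$; the coefficients are then pinned down not by a Todd-class expansion but by restricting to the one-column partition $\lambda=(n+1)$ for every $n$, which forces $W_m(n)=n^m(n+1)-(n-1)^m n$. To repair your proposal you would need either to supply the missing telescoping identity for general $m$ over $\Tt$ (with the correct arm/leg denominators), or to adopt an ARR-type bootstrap from the $m=1$ case.
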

\begin{cor*}
	For an integer $m\ge 1$ we have
	\begin{align*}
		\pi_*[\Q^{m}_n]&=m\cdot\PKT{m}{}-(m-1)\cdot\PKT{m-1}{}\in \KTh(\Hilb{n+1}(\Aa_{\CC}^2))\,,\\
		\pi_*\left(c_1(\Q_n)^m\right)&=(m+1)\cdot \Pcoh{m}{n+1} \in \coh^{2m}(\Hilb{n+1}(\Aa_{\CC}^2)) \,.
	\end{align*}
\end{cor*}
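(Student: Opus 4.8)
The plan is to derive both identities by specializing the equivariant formulas of Theorems~\ref{tw:push} and~\ref{tw:pushH} along the forgetful map to the nonequivariant theory. The essential point is that the two theories specialize along \emph{different} substitutions: passing from $\KTh_{\Tt}$ to $\KTh$ is realized by the augmentation of the representation ring $R(\Tt)=\ZZ[t,t^{-1}]\to\ZZ$ sending $t\mapsto 1$, whereas passing from $\coh_{\Tt}^{\ast}$ to $\coh^{\ast}$ is realized by the restriction $\coh_{\Tt}^{\ast}(\mathrm{pt})=\ZZ[t]\to\ZZ$ killing the degree-two parameter, $t\mapsto 0$. Under either map the equivariant power-sum classes $\PKT{m}{}$ and $\Pcoh{m}{n+1}$ restrict to their nonequivariant counterparts, so the only remaining task is to evaluate the scalar coefficients.

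For the K-theoretic identity I would first clear the removable singularities in the coefficients of Theorem~\ref{tw:push}. Writing each quotient as a finite geometric sum and then setting $t=1$ yields $\frac{1-t^{-m}}{1-t^{-1}}\big|_{t=1}=m$ for the coefficient of $\PKT{m}{}$ and, likewise, $m-1$ for the coefficient of $\PKT{m-1}{}$; equivalently one may apply l'Hôpital's rule to each $0/0$ expression. Substituting these values into the equivariant formula gives $\pi_*[\Q^{m}_n]=m\cdot\PKT{m}{}-(m-1)\cdot\PKT{m-1}{}$ in $\KTh(\Hilb{n+1}(\Aa_{\CC}^2))$.

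For the cohomological identity I would instead set $t=0$ in the formula of Theorem~\ref{tw:pushH}. Since $\Pcoh{k}{n+1}$ lives in cohomological degree $2k$ and the factor $t^{m-k}$ contributes degree $2(m-k)$, every summand with $k<m$ is divisible by $t$ and therefore vanishes in the nonequivariant limit; only the top term $k=m$ survives. It remains to identify the leading coefficient $a_{m,m}$ from the generating identity $\sum_{k}a_{k,m}x^{k}=x^{m}(x+1)-(x-1)^{m}x$: the coefficient of $x^{m}$ on the right-hand side equals $1-\binom{m}{m-1}(-1)=m+1$, so $\pi_*\!\left(c_1(\Q_n)^m\right)=(m+1)\cdot\Pcoh{m}{n+1}$ in $\coh^{2m}(\Hilb{n+1}(\Aa_{\CC}^2))$. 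The only real subtlety, and the one point I would treat carefully, is that the two nonequivariant limits are genuinely different---multiplicative ($t=1$) for $\KTh$ and additive ($t=0$) for $\coh$---so that in cohomology the answer comes from the single highest-degree term, while in $\KTh$ it comes from collapsing the entire coefficient function at $t=1$.
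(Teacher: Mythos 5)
Your proposal is correct and follows exactly the route the paper intends: both corollaries are stated as immediate specializations of Theorems \ref{tw:push} and \ref{tw:pushH}, obtained by the augmentation $t\mapsto 1$ in $\KTh$-theory (collapsing the geometric-sum coefficients to $m$ and $m-1$) and by $t\mapsto 0$ in cohomology (leaving only the top term with $a_{m,m}=m+1$). Your identification of the two different nonequivariant limits and your coefficient computations are accurate, so nothing further is needed.
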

~\\

The standard action of the two-dimensional torus $\TT = (\CC^{\ast})^2$ on $\CC^2$ induces an action on the Hilbert scheme $\Hilb{n}$. The fixed point set of this action is finite, which allows for the use of localization techniques in the study of  invariants such as equivariant cohomology or $\KTh$-theory. Moreover, since the fixed points of the action are indexed by partitions, such computations may be used to approach conjectures in combinatorics, as in \cite{Haiman}, or give new descriptions of known symmetric functions such as Jack polynomials as in \cite{Nak2}. 
The torus action is not very well-behaved from a certain point of view, for example it does not yield a GKM-variety structure, but it is often powerful enough to allow for explicit computations of algebraic invariants such as rational cohomology or $\KTh$-theory classes, which is the approach we use in this paper. This approach has been applied by many authors; see e.g. \cite{Boissiere, Evain} for cohomological computations, \cite{SV,FT} for $\KTh$-theory and \cite{Smirnov} for elliptic cohomology. \\

The structure of this paper is as follows. Sections \ref{s:partitions}--\ref{s:Nakoperators} provide background. Section \ref{s:partitions} introduces notation for partitions, Young diagrams and symmetric functions, and defines some combinatorial constructions which we use later. In Section \ref{s:Ktheory} we recall the basics of equivariant $\KTh$-theory for torus actions. Section \ref{prel:hilb} provides some introductory information on the Hilbert scheme of points in the plane, including the basics of its equivariant cohomology and $\KTh$-theory. In Section \ref{s:nested} we give a combinatorial description of the tangent space to the nested Hilbert scheme of points in the plane, and explain how to compute the push-forward in equivariant $\KTh$-theory using localization. Section \ref{s:Nakoperators} introduces the creation operators and their $\KTh$-theoretic analogues. These analogues are defined using a line bundle $\Q$ on the nested Hilbert scheme, and one of the key technical results is a computation of its push-forward along the projection to $\Hilb{n+1}(\Aa^2_{\CC})$. This is done in Theorem \ref{tw:push}. The core of this paper are Sections \ref{s:Kwyniki}--\ref{s:Nakwyniki}. In Section \ref{s:Kwyniki} we compute the equivariant and nonequivariant $\KTh$-theoretic push-forward of the line bundle $\Q$. In Section \ref{s:Adams} we use Adams operators to compute pushforwards of its tensor powers.
Section \ref{s:Hwyniki} presents the cohomological equivalents of the results obtained in Sections \ref{s:Kwyniki} and \ref{s:Adams}.
Section \ref{s:Nakwyniki} puts together the results of the preceding sections to provide a description of the action of the  creation operators on the Chern classes of the tautological bundle on~$\Hilb{n}(\Aa^2_{\CC})$. \\

\paragraph{\bf Acknowledgements:} MZ is supported by NCN grant SONATA 2020/39/D/ST1/00132. JK is supported by NCN grant SONATINA 2023/48/C/ST1/00002. Both authors are grateful to Joachim Jelisiejew and Andrzej Weber for helpful comments. JK wants to thank J\"org Sch\"urmann and Andrzej Weber for sugesting consideration of Adams operators.

\section{Partitions, Young diagrams and symmetric functions}\label{s:partitions}
Let $n \geq 1$ be an integer. A \emph{partition of $n$ of length $l$} is a non-increasing sequence of positive integers $\lambda = (\lambda_1,\lambda_2, \dots,\lambda_l)$, such that $\sum_{i=1}^l \lambda_i = n$. We write $\lambda \dashv n$ to denote that $\lambda$ is a partition of $n$. We represent each partition by a Young diagram with $n$ boxes and $l$ columns of lengths $\lambda_1,\lambda_2, \dots,\lambda_l$. We think of the boxes in the Young diagram as pairs of natural numbers, and draw the Young diagrams accordingly, e.g.

\begin{center}
	\begin{tabular}{ c c c }
		$\lambda = (3,1)$ & corresponds to  & $\tiny{\yng(1,1,2)}\,.$
	\end{tabular}
\end{center}
The numbering of boxes is such that the bottom left corner is labelled $(0,0)$, the top box in the first column is $(0,2)$ and the unique box in the second column is $(1,0)$. The motivation for this convention is that we want to identify boxes in the Young diagram with monomials in two variables $x,y$, so that the box $(i,j)$ corresponds to $x^iy^j \in \ZZ[x,y]$. 

To each box $\bullet = (i,j)$ in the diagram of $\lambda$ we assign its \emph{arm length} $a_{\lambda,\bullet}$, the number of boxes in the diagram to the right of it, and its \emph{leg length} $b_{\lambda,\bullet}$, the number of boxes above, as depicted below. We call boxes for which $a_{\lambda,\bullet} = b_{\lambda,\bullet}=0$ the \emph{corners} of the diagram. We denote the set of corners of the Young diagram corresponding to $\lambda$ by $C(\lambda)$.

\begin{center}
	\begin{tabular}{ c c }
		\ytableausetup{centertableaux}
		\ytableaushort
		{\none b, \none b ,\none \bullet a , \none}
		* {2,3,3,4} & \begin{tabular}{c}
			$a_{\lambda,\bullet}=1$ \\
			$b_{\lambda,\bullet}=2$
		\end{tabular}
	\end{tabular}
\end{center}

\begin{rem} The arm and leg lengths of a box $\bullet$ are typically denoted $a(\bullet), l(\bullet)$ in the literature, and the partition $\lambda$ is omitted in the notation. We choose to keep $\lambda$, as in many inductive arguments we consider the same box in different diagrams. We also use $b$ for denoting the leg length, so that we can keep $i,j,k,l,m,n$ as the standard choices for denoting natural numbers. 
\end{rem}

For arguments involving inductive procedures with Young diagrams, we often use the following constructions: adding or removing a box and removing the first column of the diagram. We use the following notation.
\begin{adf} \label{df:operation}
	Let $\lambda=(\lambda_1, \lambda_2, \dots, \lambda_l)$ be a nonempty partition. 
	\begin{enumerate}
		\item We denote by $\lambda[1]$ the set of all Young diagrams which are obtained from $\lambda$ by adding one box in such a way that the result is a Young diagram. 
		\item We denote by $\tilde{\lambda}$ the partition obtained from $\lambda$ by removing the first column, i.e. $$\tilde{\lambda}=(\lambda_2,\dots,\lambda_l)\,.$$
	\end{enumerate}
\end{adf}

Let $\Sigma_{n}$ be the permutation group of $n$ elements. When we work with symmetric polynomials in $n+1$ variables, we index the variables from $0$, so we consider elements of the ring $\ZZ[x_0,x_1,\dots, x_{n}]^{\Sigma_{n+1}}$. We denote by $e_i$ the $i$'th elementary symmetric polynomial 
\[e_i(x_0,\dots,x_{n}) = \sum_{0 \leq j_1 < \dots < j_i \leq n} x_{j_1}\dots x_{j_i},\]
and by $p_i$ the $i$'th power-sum polynomial 
\[p_i(x_0,\dots,x_{n}) = \sum_{j=0}^{n} x_j^i\,.\]
In particular
for $i=0$ we have
\[p_0(x_0,\dots,x_{n}) = \sum_{j=0}^{n} x_j^0=\sum_{j=0}^{n} 1=n+1\,.\]
For a partition $\lambda = (\lambda_1, \dots, \lambda_l)$, we set
$$e_{\lambda} := \prod_{i=1}^{l} e_{\lambda_i} \,, \qquad p_{\lambda} := \prod_{i=1}^{l} p_{\lambda_i} \,.$$ 

Finally, we assume that \emph{monomials} have coefficient $1$, so that $x^2y^3$ is a monomial but $3x^2y^3$ in not.

\section{Equivariant K-theory} \label{s:Ktheory}
Let $\TT \simeq (\CC^*)^{\rank \TT}$ be an algebraic torus. We call elements of the free abelian group
$\Hom(\TT,\CC^*)$ characters.
Let $\Rep(\TT)$ denote the representation ring of the torus $\TT$. Every finitely dimensional $\TT$-representation decomposes as a direct sum of one-dimensional representations, therefore
\begin{align} \label{eq:1}
	\Rep(\TT) \simeq \ZZ[\Hom(\TT,\CC^*)]\,.
\end{align}

Let $X$ be a quasiprojective complex $\TT$-variety. The algebraic $\KTh$-theory of $X$, denoted $\KTh(X)$, is
 the Grothendieck group of isomorphism classes of algebraic vector bundles on~$X$. The equivariant algebraic $\KTh$-theory of~$X$, denoted  $\KTh_{\TT}(X)$, is
the Grothendieck group of isomorphism classes of $\TT$-equivariant algebraic vector bundles on $X$. We do not distinguish in the notation between a bundle and its $\KTh$-theory class, when it is clear from the context. 

It follows from \eqref{eq:1} that the equivariant K-theory of a point is isomorphic to the ring of Laurent polynomials
$$\KTh_\TT(pt) \simeq \Rep(\TT) \simeq \ZZ[\Hom(\TT,\CC^*)] \simeq \ZZ[t_1^\pm,\dots,t_{\rank \TT}^\pm]\,,$$
where $t_1,\dots,t_{\rank \TT}$ are the coordinate characters. The equivariant $\KTh$-theory of an arbitrary $\TT$-variety is a $\KTh_\TT(pt)$-module. 
\begin{rem}
	In this paper we also work with torus equivariant cohomology $\coh^{\ast}_\TT$. Let $t_i$ be a coordinate character pf $\TT$ and $\CC_{t_i}$ the corresponding $\TT$-representation. For simplicity we use $t_i$ to also refer to the class of this representation in K-theory and its first Chern class in cohomology. With this convention, we have
	$$ \KTh_\TT(pt) \simeq \ZZ[t_1^\pm,\dots,t_{\rank \TT}^\pm]\,, \qquad \coh^{\ast}_\TT(pt) \simeq \ZZ[t_1,\dots,t_{\rank \TT}]\,.$$
\end{rem}

Let $S\subset \KTh_\TT(pt)$ be the multiplicative system consisting of all the nonzero elements. We denote the localized K-theory ring of $X$ by $S^{-1}\KTh_\TT(X)$. The localization theorem \cite[Theorem 2.1]{Tho} implies that the restriction map induces an isomorphism
$$S^{-1}\KTh_\TT(X) \simeq S^{-1}\KTh_\TT(X^\TT)\,.$$
Let $X$ be a smooth projective $\TT$-variety such that the fixed point set $X^\TT$ is finite.
Then the equivariant K-theory of $X$ is a free $\KTh_\TT(pt)$-module.
In this case we have an inclusion of rings 
$$\KTh_\TT(X) \subset S^{-1}\KTh_\TT(X) \simeq S^{-1}\KTh_\TT(X^\TT)\,.$$
Therefore, restriction to the fixed point set induces the inclusion
$$\KTh_\TT(X)  \mono \KTh_\TT(X^\TT)\simeq \bigoplus_{x\in X^\TT}\ZZ[t_1^\pm,\dots,t_{\rank \TT}^\pm]\,.$$

\begin{rem}\label{rem:BB}
	In the above reasoning one may weaken the assumption that $X$ is projective. It is enough to assume that  there exists a one parameter subgroup of $\TT$ such that the Bia{\l}ynicki-Birula cells (see \cite{B-B1}) cover $X$. The proof is an induction on Bia{\l}ynicki-Birula skeleta, similar to \cite[Section 9]{FRW}.
\end{rem}

For an equivariant complex vector bundle $\mathcal{E}$ over a $\TT$-variety $X$ we define its $\KTh$-theoretic Euler class as
$$\eu(\mathcal{E}):=\sum_{k=0}^{{\rm rk}(\mathcal{E})}(-1)^k[\Lambda^k \mathcal{E}^*] \in \KTh_\TT(X)\,.$$
It is a multiplicative class such that for $\TT$-equivariant line bundles $\L$ we have 
$$\eu(\L):=1-\L^*\,. $$
The Lefschetz-Riemann-Roch formula (LRR for short) allows to compute push-forwards in equivariant $\KTh$-theory using restriction to the fixed point set. Let us recall it here.
\begin{atw}[{\cite[Theorem 3.5]{Tho} and \cite[Theorem 5.11.7]{CG}}] \label{tw:LRR}
	Let $X$ and $Y$ be smooth $\TT$-varieties. Suppose that the fixed point sets $X^\TT$ and $Y^\TT$ are finite. Let $p:X \to Y$ be an equivariant proper morphism.
	For a fixed point $y\in Y^\TT$ and an arbitrary class $\alpha \in S^{-1}K_\TT(X)$ we have
	$$
	\frac{(p_*\alpha)_{|y}}{\eu(T_yY)}=
	\sum_{x\in p^{-1}(y)\cap X^\TT}\frac{\alpha_{|x}}{\eu(T_xX)} \in S^{-1}K_\TT(pt)\,.
	$$
\end{atw}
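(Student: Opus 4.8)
The plan is to deduce this from the localization isomorphism recalled above together with the functoriality of pushforward in equivariant $\KTh$-theory. The core idea is that after inverting $S$ everything reduces to the fixed loci, where the pushforward is entirely controlled by the (inverted) Euler classes of the normal bundles, i.e. of the tangent spaces at isolated fixed points. First I would reduce to the localized setting: by the assumptions $X^\TT$ and $Y^\TT$ are finite, so the restriction maps $S^{-1}\KTh_\TT(X)\xrightarrow{\sim}S^{-1}\KTh_\TT(X^\TT)$ and $S^{-1}\KTh_\TT(Y)\xrightarrow{\sim}S^{-1}\KTh_\TT(Y^\TT)$ are isomorphisms by \cite[Theorem 2.1]{Tho}. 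Since both sides of the claimed identity live in $S^{-1}\KTh_\TT(pt)$ and the formula is to be checked at each $y\in Y^\TT$ separately, it suffices to understand $(p_*\alpha)_{|y}$ after localization.

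Next I would use the self-intersection (excess normal bundle) computation for a single isolated fixed point. For a fixed point $x\in X^\TT$ the inclusion $\iota_x:\{x\}\hookrightarrow X$ is a regular embedding with normal bundle $T_xX$, and the fundamental local fact is that the composition $\iota_x^*\circ(\iota_x)_*$ acts on $\KTh_\TT(pt)$ as multiplication by the $\KTh$-theoretic Euler class $\eu(T_xX)=\sum_k(-1)^k[\Lambda^k(T_xX)^*]$. After localization $\eu(T_xX)$ becomes invertible in $S^{-1}\KTh_\TT(pt)$ precisely because $x$ is isolated (no character in $T_xX$ is trivial), and hence the localized classes $\tfrac{1}{\eu(T_xX)}[\iota_x]_*(1_x)$ form the idempotent decomposition of the identity dual to restriction. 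Concretely, writing $\alpha=\sum_{x\in X^\TT}\tfrac{\alpha_{|x}}{\eu(T_xX)}\,[\iota_x]_*(1)$ in $S^{-1}\KTh_\TT(X)$ gives the localization expansion of $\alpha$.

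Then I would apply $p_*$ and use functoriality $p_*\circ(\iota_x)_*=(\iota_{p(x)})_*$, noting that $p$ maps fixed points to fixed points, so $p(x)\in Y^\TT$. This yields
\begin{align*}
p_*\alpha=\sum_{x\in X^\TT}\frac{\alpha_{|x}}{\eu(T_xX)}\,(\iota_{p(x)})_*(1)
=\sum_{y\in Y^\TT}\Bigl(\sum_{x\in p^{-1}(y)\cap X^\TT}\frac{\alpha_{|x}}{\eu(T_xX)}\Bigr)(\iota_y)_*(1)\,.
\end{align*}
Restricting to a fixed point $y\in Y^\TT$ and using the analogous self-intersection identity $(\iota_y)_*(1)_{|y}=\eu(T_yY)$ on $Y$, I obtain $(p_*\alpha)_{|y}=\eu(T_yY)\cdot\sum_{x\in p^{-1}(y)\cap X^\TT}\tfrac{\alpha_{|x}}{\eu(T_xX)}$, which is the desired formula after dividing by the invertible class $\eu(T_yY)$.

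The main obstacle I anticipate is justifying the self-intersection formula $\iota_x^*(\iota_x)_*=\eu(T_xX)$ and, more delicately, the claim that the localized classes $\tfrac{1}{\eu(T_xX)}(\iota_x)_*(1)$ genuinely resolve the identity so that the expansion of $\alpha$ above is correct; this is where the properness of $p$ and smoothness of $X,Y$ enter, ensuring $p_*$ is defined and commutes with the localization isomorphisms. Since these facts are exactly the content of the cited references \cite[Theorem 3.5]{Tho} and \cite[Theorem 5.11.7]{CG}, I would invoke them for the technical core rather than reprove the excess-intersection computation from scratch, and simply assemble the statement via the bookkeeping above.
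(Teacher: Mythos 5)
The paper does not prove this statement at all: it is imported verbatim as background from the cited sources (Thomason, Theorem 3.5, and Chriss--Ginzburg, Theorem 5.11.7), so there is no in-paper argument to compare yours against. Your sketch is the standard derivation of the localization/LRR formula and it is correct as far as it goes: the localization isomorphism for $X$ and $Y$, the self-intersection identity $\iota_x^*(\iota_x)_*=\eu(T_xX)$ coming from the Koszul resolution, invertibility of $\eu(T_xX)$ in $S^{-1}\KTh_\TT(pt)$ because an isolated fixed point has no trivial tangent character, the expansion of $\alpha$ over fixed points, functoriality $p_*\circ(\iota_x)_*=(\iota_{p(x)})_*$, and the final restriction to $y$. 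The one step you flag as delicate --- that the classes $\eu(T_xX)^{-1}(\iota_x)_*(1)$ resolve the identity --- is settled by the observation that $\bigl((\iota_x)_*(1)\bigr)_{|x'}=0$ for $x'\neq x$ (the derived restriction of a class supported at $x$ to a disjoint point vanishes), combined with injectivity of restriction to $X^\TT$ after localization; spelling that out would close the only real gap in your write-up. Deferring the self-intersection formula itself to the cited references is reasonable and consistent with how the paper treats the result.
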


	 Let $m$ be a natural number and $X$ a $\TT$-variety. The $m_{th}$ Adams operator $\psi^m$ is a map
	 $$
	 \psi^m: \KTh_{\TT}(X) \to \KTh_{\TT}(X)\,,
	 $$
	 which was introduced in the non equivariant setting in \cite{Adams},
	 see \cite{Koeck} for equivariant version and \cite{RR} for a survey. Let us recall its basic properties
	 \begin{pro}
	 	\begin{itemize}
	 		\item  The Adams operator $\psi^m$ is a ring homomorphism.
	 		\item  For an equivariant line bundle $\L \in \Pic^\TT(X)$ we have $$\psi^m(\L)=\L^{m}\,.$$
	 		\item The Adams operator is a natural transformation with respect to pullbacks, i.e. for any $\TT$-equivariant map $f:Y\to X$ we have
	 		$$f^*\circ \psi^m =\psi^m \circ f^*\,.$$
	 	\end{itemize}	  
	 \end{pro}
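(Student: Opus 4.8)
The plan is to derive all three properties from the standard $\lambda$-ring structure on equivariant $\KTh$-theory, in which the operations $\lambda^i(\mathcal{E}) := [\Lambda^i \mathcal{E}]$ are given by exterior powers. First I would recall the generating-function definition of the Adams operators. Writing $\lambda_s(\mathcal{E}) := \sum_{i \geq 0} \lambda^i(\mathcal{E})\, s^i$, the exterior powers satisfy $\lambda_s(\mathcal{E} \oplus \mathcal{F}) = \lambda_s(\mathcal{E}) \cdot \lambda_s(\mathcal{F})$, so $\lambda_s$ is a homomorphism from the additive group of $\KTh_\TT(X)$ to the multiplicative group of power series with constant term $1$. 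The Adams operators are then defined by
$$\sum_{m \geq 1} \psi^m(\mathcal{E})\, s^m = -s \frac{d}{ds} \log \lambda_{-s}(\mathcal{E})\,,$$
which exhibits each $\psi^m$ as a fixed universal (Newton) polynomial in $\lambda^1, \dots, \lambda^m$.

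The second and third properties then fall out almost immediately. For a line bundle $\L \in \Pic^\TT(X)$ all higher exterior powers vanish, so $\lambda_{-s}(\L) = 1 - \L s$; computing $-s \frac{d}{ds} \log(1 - \L s) = \sum_{m \geq 1} \L^m s^m$ and comparing coefficients gives $\psi^m(\L) = \L^m$, which is the second property. For the third property, naturality of exterior powers, $f^*(\Lambda^i \mathcal{E}) = \Lambda^i(f^* \mathcal{E})$, shows that each $\lambda^i$ commutes with $f^*$; since $\psi^m$ is a fixed polynomial in the $\lambda^i$ and $f^*$ is a ring homomorphism, it follows that $f^* \circ \psi^m = \psi^m \circ f^*$.

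The first property splits into additivity and multiplicativity. Additivity is forced by the definition: applying $\log$ to the relation $\lambda_{-s}(\mathcal{E} \oplus \mathcal{F}) = \lambda_{-s}(\mathcal{E}) \lambda_{-s}(\mathcal{F})$ turns the product into a sum, hence $\psi^m(\mathcal{E} \oplus \mathcal{F}) = \psi^m(\mathcal{E}) + \psi^m(\mathcal{F})$, and together with $\psi^m(0)=0$ this yields a well-defined additive map on the Grothendieck group. Multiplicativity is the only genuine content. By the splitting principle there is, for any pair of equivariant bundles, a proper equivariant morphism $\pi$ with $\pi^*$ injective on $\KTh_\TT$ along which both bundles decompose as sums of equivariant line bundles; since $\psi^m$ is natural it suffices to verify multiplicativity after pulling back, i.e.\ on sums of line bundles. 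There, additivity reduces the check to line bundles $\L, \L'$, where $\psi^m(\L \otimes \L') = (\L \otimes \L')^m = \L^m \otimes (\L')^m = \psi^m(\L) \cdot \psi^m(\L')$ by the second property. Combined with $\psi^m(\O_X) = \O_X$, this shows $\psi^m$ is a ring homomorphism.

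The hard part is justifying the splitting principle in the $\TT$-equivariant setting; I would either invoke the equivariant $\lambda$-ring formalism of \cite{Koeck}, or---exploiting the specific situation of this paper, where $X$ has isolated fixed points---pass through the injection $\KTh_\TT(X) \hookrightarrow \KTh_\TT(X^\TT) = \bigoplus_{x} \Rep(\TT)$. Over each fixed point every equivariant bundle restricts to a direct sum of one-dimensional characters, so multiplicativity, and indeed all three properties, can be checked on characters, where the verification is completely elementary.
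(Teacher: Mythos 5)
Your argument is correct, but note that the paper does not actually prove this proposition: it is stated as a recollection of standard facts, with the burden deferred entirely to the cited literature (\cite{Adams}, \cite{Koeck}, \cite{RR}). Your proof is the standard one via the $\lambda$-ring structure: defining $\psi^m$ through $-s\frac{d}{ds}\log\lambda_{-s}$ makes the line-bundle formula, additivity, and naturality immediate, and isolates multiplicativity as the only point needing the equivariant splitting principle. Both of your proposed ways to handle that last step are sound, with one caveat on the second: the injection $\KTh_\TT(X)\hookrightarrow\KTh_\TT(X^\TT)$ is available only for the special varieties of this paper (those covered by Bia{\l}ynicki--Birula cells with finite fixed locus), whereas the proposition is stated for an arbitrary $\TT$-variety $X$; for the general statement you must invoke the equivariant splitting principle or the $\lambda$-ring formalism of \cite{Koeck}. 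Also observe that your fixed-point reduction is not circular, since you establish naturality of $\psi^m$ under pullbacks (from naturality of exterior powers) before using it to restrict to $X^\TT$. In short, the proof is complete and more self-contained than what the paper offers, at the cost of a page of standard $\lambda$-ring bookkeeping that the authors chose to outsource.
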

 	The Adams operators usually do not commute with the pushforwards, yet there is a Riemann-Roch type formula.
 	\begin{adf} [{\cite[Chapter I \textsection 6]{RR}}]
 		The Bott's cannibalistic class $\theta^m$ is a multiplicative characteristic class of vector bundles such that
 		$$\theta^m(\L)=\frac{1-\L^{m}}{1-\L} \in \KTh_\TT(X)\,,$$
 		for any $\TT$-equivariant line bundle $\L$.
 	\end{adf}
 	\begin{atw} [Adams-Riemann-Roch {\cite[Theorem 4.5]{Koeck}, \cite[Theorem 7.6]{RR}}] \label{tw:ARR}
 		Let $X$ and $Y$ be smooth $\TT$-varieties and $\pi:X \to Y$ be an equivariant proper morphism. Denote by $T_\pi=TX-f^*TY$ the virtual tangent bundle to $\pi$.
 		For a class $\alpha \in K_\TT(X)$ we have
 		$$
 		\pi_*\left(\frac{\psi^m(\alpha)}{\theta^m(T^*_\pi)}\right)=\psi^m(\pi_*(\alpha))\,.
 		$$
 	\end{atw}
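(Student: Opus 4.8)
The plan is to reduce the identity to two basic cases by factoring $\pi$ and then to drive both cases with a single computation on line bundles. First I would factor $\pi$ equivariantly as a closed immersion $i\colon X\mono Y\times\mathbb{P}^N$ followed by the projection $p\colon Y\times\mathbb{P}^N\to Y$, which is possible for smooth quasiprojective $\TT$-varieties. Since the virtual tangent bundle is additive along the composition, $T_\pi=T_i+i^*T_p$, and since $\theta^m$ is multiplicative and commutes with pullback, a short computation using the projection formula shows that the two separate identities for $i$ and for $p$ assemble into the identity for $\pi$. Thus it suffices to treat a closed immersion and a (trivial) projective bundle projection separately.

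The engine of both cases is the behavior of $\psi^m$ on the $\KTh$-theoretic Euler class. For a $\TT$-equivariant line bundle $\L$ one computes directly
\[
\psi^m(\eu(\L))=\psi^m(1-\L^*)=1-\L^{*m}=(1-\L^*)\big(1+\L^*+\dots+\L^{*(m-1)}\big)=\eu(\L)\cdot\theta^m(\L^*),
\]
and the splitting principle upgrades this to $\psi^m(\eu(\mathcal{E}))=\eu(\mathcal{E})\cdot\theta^m(\mathcal{E}^*)$ for an arbitrary equivariant bundle $\mathcal{E}$. In other words, passing $\psi^m$ through an Euler class produces exactly the cannibalistic class of the dual as a correction, which is precisely the shape of the correction appearing in the theorem.

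For the closed immersion I would use the normal bundle $N$, for which $T_i=-N$ and hence $\theta^m(T_i^*)^{-1}=\theta^m(N^*)$, so the claim reads $i_*\big(\psi^m(\alpha)\cdot\theta^m(N^*)\big)=\psi^m(i_*\alpha)$. Deformation to the normal cone reduces this to the zero section $s$ of the total space of $N$, where the Koszul resolution gives $s_*\mathcal{O}=\eu(N)$; combining the line-bundle computation above with the self-intersection identity $s^*s_*\beta=\eu(N)\cdot\beta$ and the projection formula yields the immersion case, with the factor $\theta^m(N^*)$ emerging exactly from the Euler-class twist. For the projection $p$ I would use the projective bundle formula to reduce to the generators given by powers of $\mathcal{O}(1)$ over $\KTh_\TT(Y)$, on which $p_*$ and $\psi^m$ are computed explicitly and the relative tangent bundle $T_p$ supplies the $\theta^m$ correction.

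The hard part will be the closed-immersion case: setting up deformation to the normal cone equivariantly and tracking the Euler and cannibalistic classes through the self-intersection and projection formulas without sign errors. By contrast, verifying multiplicativity of both sides under the factorization and carrying out the explicit $\mathbb{P}^N$ computation for the projection are routine, and the equivariant refinement (following \cite{Koeck}) mainly requires checking that every construction used is $\TT$-equivariant.
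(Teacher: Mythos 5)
This statement is not proved in the paper at all: Theorem \ref{tw:ARR} is imported as a black box from the cited references (K\"ock's equivariant Adams--Riemann--Roch and the Fulton--Lang/SGA~6 treatment), so there is no internal proof to compare against. Your outline is, in substance, the standard d\'evissage proof from exactly those sources: factor $\pi$ as a regular closed immersion into $Y\times\mathbb{P}(V)$ followed by the projection, check multiplicativity of the statement under composition via $T_{p\circ i}=T_i+i^*T_p$ and the projection formula, handle the immersion by deformation to the normal cone together with the Koszul identity $\psi^m(\eu(\mathcal{E}))=\eu(\mathcal{E})\cdot\theta^m(\mathcal{E}^*)$, and handle the projection by the projective bundle formula. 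That key identity is correct with the paper's conventions $\eu(\L)=1-\L^*$ and $\theta^m(\mathcal{M})=\frac{1-\mathcal{M}^m}{1-\mathcal{M}}$, and the immersion case does reduce to the zero section exactly as you say. Two points you gloss over deserve a sentence each in a real write-up: first, the division by $\theta^m(T_\pi^*)$ only makes sense after inverting $m$ (non-equivariantly) or passing to the localized equivariant $\KTh$-theory, since $\theta^m$ of a trivial summand equals $m$ --- this is precisely the content of the remark following the theorem in the paper; second, the equivariant factorization requires embedding $X$ into $Y\times\mathbb{P}(V)$ for a $\TT$-representation $V$, which needs a Sumihiro-type equivariant embedding theorem rather than being automatic. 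With those caveats your plan is a faithful reconstruction of the proof in \cite[Theorem 4.5]{Koeck} and \cite[Theorem 7.6]{RR}.
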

  	\begin{rem}
 	The above equality holds in the localised equivariant K-theory of $Y$ with rational coefficients. Localisation is necessary to make sense of dividing by the Bott class.
	 \end{rem}

\section{Hilbert scheme of points in a complex plane}\label{prel:hilb}
In this section we recall some standard facts about the Hilbert scheme of points in the plane. For reference, see \cite{MS} or \cite{Nak1}.
\subsection{Definition and torus action}
Let $\Hilb{n} := \Hilb{n}(\Aa_{\CC}^2)$ be the Hilbert scheme of $n$ points in the complex plane. It is a quasi-projective variety parametrising zero-dimensional subschemes of $\Aa_{\CC}^2$ of length $n$. Set-theoretically, $\Hilb{n}$ can be identified with the set of ideals $I \lhd \CC[x,y]$ such that the $\CC$-vector space $\CC[x,y]/I$ has dimension $n$, i.e.,
$$\Hilb{n}:=\{I \lhd \CC[x,y]|\, \dim \CC[x,y]/I=n\}\,.$$
The Hilbert scheme comes with a universal flat family
$$F_n \subset \Hilb{n} \times \CC^2\,,$$
whose fibre over a subscheme $Z\in \Hilb{n}$ is $Z$, treated as a subscheme in $\CC^2$. The push-forward of the sheaf of regular functions on $F_n$ along the projection $p: F_n \to H_n$ defines (a sheaf of sections of) a rank $n$ vector bundle on $\Hilb{n}$
$$\V_n =p_{\ast} \mathcal{O}_{F_n} \in \operatorname{Vect}(\Hilb{n})\,.$$
 We call it the \emph{universal rank $n$ vector bundle on $\Hilb{n}$}. The fibre of the universal bundle $\V_n$ over $I$ is the $n$-dimensional vector space $\CC[x,y]/I$.

Let $\TT=(\CC^{\ast})^2$ be the algebraic torus. The standard $\TT$--action on $\CC^2$ given by
$$(t_1, t_2)\cdot (x,y) := (t_1x, t_2y)$$
induces an action on polynomials $f \in \CC[x,y]$, via
$$(t_1,t_2)\cdot f(x,y):=f(t_1^{-1}x,t_2^{-1}y)\,.$$
It induces a well-defined action on ideals $I \lhd \CC[x,y]$. The fixed points of this action are monomial ideals, i.e. ideals generated by monomials contained in $\Hilb{n}$. To each such an ideal one can assign a Young diagram of a certain partition $\lambda$, in the following way. Let $I$ be a corank $n$ monomial ideal in $\CC[x,y]$. Then there are exactly $n$ monomials that lie outside of $I$ (recall that we assume all monomials to be monic). They constitute a $\CC$-basis of the vector space $\CC[x,y]/I$. The Young diagram corresponding to $I$ consists of boxes $(i,j)$ for which $x^iy^j \notin I$, e.g.

\begin{center}
\begin{tabular}{ c c c }
$I = \langle y^3, xy, x^2 \rangle$ & corresponds to  &\ytableausetup{mathmode, 
 boxsize=1.2em}
\begin{ytableau}
y^2  \\
y \\
1 & x 
\end{ytableau}
\end{tabular}.
\end{center}

\begin{rem} It is commonplace not to distinguish in the notation between the partition $\lambda$ and the associated Young diagram. We extend this convention also to the monomial ideals, so that the monomial ideal corresponding to the partition $\lambda$ is also denoted by $\lambda$ when this does not lead to confusion. When we want to distinguish between the three objects, $\Delta_{\lambda}$ stands for the Young diagram associated to $\lambda$ and $I_{\lambda}$ denotes the monomial ideal associated to $\lambda$.
\end{rem}

 The Hilbert scheme $\Hilb{n}$ is a smooth irreducible quasi-projective variety of dimension~$2n$, \cite{Fogarty}. The tangent space at the point $I \in \Hilb{n}$ can be described as the vector space of $\CC[x,y]$-module homomorphisms from $I$ to the quotient $\CC[x,y]/I$:
\[T_{I}\Hilb{n} = \Hom_{\CC[x,y]}(I, \CC[x,y]/I).\]
At a fixed point $I_{\lambda}$, the torus action induced on the tangent space
$$T_{\lambda}\Hilb{n} := T_{I_{\lambda}} \Hilb{n}$$
equips this $2n$-dimensional vector space with a structure of a representation of $\TT$. Its weights can be read from the corresponding Young diagram. Denote by $q, t$ the coordinate characters of the torus $\TT$.
Then
$$\Rep(\TT) = \ZZ[q^{\pm 1},t^{\pm 1}]\,.$$
Each pair of integers $(k,l)$ determines a representation of $\TT$, given by the character
$$(t_1,t_2) \mapsto t_1^k t_2^l\,,$$
and we call $(k,l)$ the \emph{weight} of this one-dimensional representation. Each box in the diagram represents two weights $(a+1,-b)$ and $(-a,b+1)$, where $a = a_{\lambda}(\bullet), b=b_{\lambda}(\bullet)$ for a box $\bullet$, see \cite[Proposition 5.8]{Nak1} and Example \ref{table-weights-H4}. 

\begin{rem}
Note that with the above convention each box $(i,j)$ in the Young diagram is naturally assigned a weight which is equal to $(i,j)$, since we have an identification between boxes in the diagram and monomials.
\end{rem}

\subsection{Equivariant cohomology and K-theory} \label{s:Eqcoh}
Consider a one-parameter subgroup $\sigma:\CC^*\to \TT$ given by
$$\sigma(t)=(t,t^N)\,,$$
for a sufficiently big integer $N$. The fixed point sets $\Hilb{n}^\TT$ and $\Hilb{n}^\sigma$
 coincide. Moreover, the Bia{\l}ynicki-Birula cells cover the Hilbert scheme, see \cite{ESBBdec} or \cite[Lemma 1.2.4]{YiNing}. Therefore,
for nonequivariant cohomology and $\KTh$-theory we have the following isomorphisms of $\ZZ$-modules
		$$\KTh(\Hilb{n})\simeq \coh^{\ast}(\Hilb{n};\ZZ)\simeq \bigoplus_{\lambda\dashv n} \ZZ\,.$$
Analogous result holds in the equivariant case.  Denote by $q, t$ the coordinate characters of the torus $\TT$.  Then
$$ \KTh_{\TT}(pt) \simeq \ZZ[q^{\pm 1},t^{\pm 1}] \,, \qquad \coh^{\ast}_{\TT}(pt) \simeq\ZZ[q,t]\,. $$
Bia{\l}ynicki-Birula decomposition implies that $\KTh_\TT(\Hilb{n})\simeq \bigoplus_{\lambda\dashv n} \ZZ[q^\pm,t^\pm]$ as $\KTh_{\TT}(pt)$-modules and $ \coh^{\ast}_\TT(\Hilb{n};\ZZ)\simeq \bigoplus_{\lambda\dashv n} \ZZ[q,t]$ as $\coh_{\TT}^{\ast}(pt)$-modules. In particular $\KTh_\TT(\Hilb{n})$ is a free $\KTh_\TT(pt)$-module and $\coh^{\ast}_\TT(\Hilb{n})$ is a free $\coh^{\ast}_\TT(pt)$-module.

Let $i_{\lambda}: \{I_{\lambda}\} \to \Hilb{n}$ be the inclusion of the fixed point $I_{\lambda}$ into the Hilbert scheme. Given a class $\mathcal{E} \in \KTh_{\TT}(\Hilb{n})$, its \emph{restriction to the fixed point $I_{\lambda}$} is its pullback along this inclusion, i.e.
$$i_{\lambda}^{\ast}\mathcal{E} \in \ZZ[q^{\pm 1}, t^{\pm 1}]\,.$$
We denote it by $\mathcal{E}(\lambda)$ or $\mathcal{E}_{|\lambda}$. 
 The localization theorems (see Section \ref{s:Ktheory}) and the Białynicki-Birula theorem (see Remark \ref{rem:BB}) imply the following.
\begin{pro}\label{pro:localization1}
	Restriction to the fixed point variety induces an inclusion of rings
	\begin{align*}
		\KTh_\TT(\Hilb{n}) &\mono \KTh_\TT(\Hilb{n}^\TT) \simeq \bigoplus_{\lambda\dashv n} \ZZ[q^\pm,t^\pm]\,, \\
		\coh^{\ast}_\TT(\Hilb{n}) &\mono  \coh^{\ast}_\TT(\Hilb{n}^\TT) \simeq \bigoplus_{\lambda\dashv n} \ZZ[q,t]\,.
	\end{align*}
\end{pro}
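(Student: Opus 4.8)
The plan is to reduce the statement to the localization machinery recalled in Section \ref{s:Ktheory} together with the freeness of $\KTh_\TT(\Hilb{n})$ established in the preceding paragraph. The only genuine subtlety is that $\Hilb{n}$ is quasi-projective rather than projective, so the clean inclusion $\KTh_\TT(X)\subset S^{-1}\KTh_\TT(X)\simeq S^{-1}\KTh_\TT(X^\TT)$ stated in Section \ref{s:Ktheory} for smooth projective $X$ is not directly available; this is precisely where Remark \ref{rem:BB} enters.

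First I would record the structural inputs: $\Hilb{n}$ is smooth \cite{Fogarty}, its $\TT$-fixed locus is the finite set $\{I_\lambda : \lambda\dashv n\}$, and the Białynicki--Birula cells of the one-parameter subgroup $\sigma$ cover it, as recalled just before the statement. By Remark \ref{rem:BB} these hypotheses suffice to run the same induction on Białynicki--Birula skeleta used in the projective case, yielding both that $\KTh_\TT(\Hilb{n})$ is a free $\KTh_\TT(pt)$-module and that the composite
$$\KTh_\TT(\Hilb{n})\longrightarrow S^{-1}\KTh_\TT(\Hilb{n})\xrightarrow{\ \sim\ } S^{-1}\KTh_\TT(\Hilb{n}^\TT)$$
is injective, the second arrow being the Thomason localization isomorphism \cite[Theorem 2.1]{Tho}. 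Injectivity of the first arrow is formal: since $\KTh_\TT(pt)\simeq\ZZ[q^\pm,t^\pm]$ is an integral domain and $S$ is the set of its nonzero elements, a free module over it is torsion-free, so it injects into its localization.

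Next I would observe that the restriction map is not merely defined after localization: it is the sum of the honest pullbacks $i_\lambda^*:\KTh_\TT(\Hilb{n})\to\KTh_\TT(\{I_\lambda\})\simeq\ZZ[q^\pm,t^\pm]$, so it already lands in the non-localized module $\KTh_\TT(\Hilb{n}^\TT)\simeq\bigoplus_{\lambda\dashv n}\ZZ[q^\pm,t^\pm]$. The displayed composite factors through this map, so its injectivity forces the restriction map $\KTh_\TT(\Hilb{n})\to\KTh_\TT(\Hilb{n}^\TT)$ itself to be injective. Finally, each $i_\lambda^*$ is a ring homomorphism, being a pullback, so the combined map respects products and the monomorphism is an inclusion of rings, as claimed.

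The cohomological statement is entirely parallel, replacing Thomason localization by the Atiyah--Bott/Borel localization theorem in equivariant cohomology and $\KTh_\TT(pt)$ by $\coh^*_\TT(pt)\simeq\ZZ[q,t]$; freeness of $\coh^*_\TT(\Hilb{n})$ and the covering by Białynicki--Birula cells give injectivity of $\coh^*_\TT(\Hilb{n})\to\coh^*_\TT(\Hilb{n}^\TT)$ by the same argument. The main obstacle I anticipate is not any single computation but the careful invocation of Remark \ref{rem:BB} to replace projectivity: one must be sure the skeleton induction genuinely produces freeness in the non-compact setting, since freeness is exactly what upgrades the localized isomorphism to an honest inclusion.
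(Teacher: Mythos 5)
Your proposal is correct and follows exactly the route the paper intends: the paper's justification is the one-line remark that the localization theorems of Section \ref{s:Ktheory} together with the Bia{\l}ynicki--Birula covering (Remark \ref{rem:BB}) imply the statement, and your write-up simply spells out that argument (freeness from the BB skeleta, torsion-freeness giving injectivity into the localization, and the factorization of the restriction map through the non-localized fixed-point ring). No discrepancies to report.
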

 As a direct consequence, in order to prove equality of two classes in $\KTh_\TT(\Hilb{n})$ or $\coh^{\ast}_\TT(\Hilb{n})$ it is enough to check that their restrictions to each fixed point $I_\lambda$ are equal.

\subsection{Kirwan map} \label{s:Kirwan}
The Kirwan map in the equivariant cohomology is a morphism of $\coh^{\ast}_\TT(pt)$ modules
\begin{align}\label{eq:KirwanH}
	\kappa^{\coh}: \ZZ[q, t] [x_0, \dots, x_{n-1}]^{\Sigma_{n}} \to \coh^{\ast}_{\TT}(\Hilb{n})
\end{align}
 sending the variables $x_i$ to the Chern roots of the tautological bundle $\V_n$. Analogously, the Kirwan map in the equivariant $\KTh$-theory is a morphism of $\KTh_\TT(pt)$-modules
 \begin{align}\label{eq:KirwanK}
 	\kappa^{\KTh}: \ZZ[q^{\pm 1}, t^{\pm 1}][x_0^{\pm 1}, \dots, x_{n-1}^{\pm 1}]^{\Sigma_{n}} \to \KTh_{\TT}(\Hilb{n})\,, \underline{}
 \end{align}
 sending the variables $x_i$ to the $\KTh$-theoretic Chern roots of the tautological bundle $\V_n$.
 \begin{ex}
 	We have
 	$$\kappa^{\KTh} (x_0 + x_1 +\dots + x_{n-1})= \V_n \in \KTh_{\TT}(\Hilb{n})\,.$$
 	More generally, the $k$'th elementary symmetric polynomial is mapped to the class of the $k$'th exterior power of $\V_n$, i.e.
 	$$\kappa^{\KTh}\left( e_k(x_0,\dots,x_{n-1})\right)= \Lambda^k\V_n \in \KTh_{\TT}(\Hilb{n})\,.$$
 \end{ex}
 \begin{adf} \label{df:Powersum}
 	Let $k>0$ be a positive integer.
 	\begin{itemize}
 		\item Let $\PKT{k}{n} \in \KTh_{\TT}(\Hilb{n})$ be the image under $\kappa^{\KTh}$ of the $k$'th power-sum polynomial $p_k(x_0,\dots,x_{n-1})$, i.e.,
 		$$\PKT{k}{n}=\kappa^{\KTh}(x_0^k+x_1^k+\dots+ x_{n-1}^k) \in \KTh_\TT(\Hilb{n}) \,.$$
 		We set $\PKT{0}{n}=n\in \KTh_{\TT}(\Hilb{n})$.
 		\item Let $\Pcoh{k}{n} \in \coh^{2k}_{\TT}(\Hilb{n})$ be the image under $\kappa^{\coh}$ of the $k$'th power-sum polynomial. We set $\Pcoh{0}{n}=n\in \coh^*_{\TT}(\Hilb{n})$.
 		\item For a sequence of non-negative integers $\lambda=(\lambda_1,\dots,\lambda_l)$ we set
 		$$\Pcoh{\lambda}{n}=\prod^{l}_{k=1} \Pcoh{\lambda_k}{n} \in \coh^{*}_{\TT}(\Hilb{n})\,.$$
 		For the empty sequence $\lambda = \varnothing$ let $\Pcoh{\varnothing}{n}=1$.
 		\item We use the same notation $\PKT{k}{n}, \Pcoh{k}{n}$ for the corresponding elements of $\KTh(\Hilb{n})$ and $\coh^*(\Hilb{n})$.
 	\end{itemize}
 \end{adf}
	The nonequivariant Kirwan map is surjective, cf. \cite{ES}. This result can be generalized to equivariant cohomology using graded Nakayama lemma, see Remark \ref{rem:Nakayama} below. It is a folklore knowledge that an analogous result holds in the K-theoretical setting, i.e. the map \eqref{eq:KirwanK} is surjective, see e.g. \cite[Paragraph 3.2]{Smirnov}. As $\Hilb{n}$ is a Nakajima quiver variety, in particular a symplectic reduction, this may be deduced from a much more general fact \cite{QuiverKirwan}. 
	In this paper we do not use the surjectivity of mentioned maps.
	
\begin{rem} \label{rem:Nakayama}
	Surjectivity of the map $\kappa^{\coh}$ may be deduced from the surjectivity of its nonequivariant counterpart.
	Let $R$ be the $\TT$-equivariant cohomology ring of a point, $R_+\subset R$ its ideal of elements in positive degrees, and $M$ the equivariant cohomology of the Hilbert scheme, i.e. 
	$$R=\coh^{\ast}_{\TT}(pt)\simeq\ZZ[q,t]\,, \qquad  R_+= \bigoplus_{k=1}^\infty \coh^{k}_{\TT}(pt)\,, \qquad M=\coh^{\ast}_{\TT}(\Hilb{n})\,.$$
	The image of the Kirwan map $\kappa^{\coh}$ is an $R$-submodule of $M$, denote it by $N$. Surjectivity of the nonequivariant Kirwan map and the Białynicki-Birula decomposition (see Section \ref{s:Eqcoh}) imply that
	$$M=N+R_+M\,.$$
	By the graded Nakayama lemma, it follows that $M=N$, proving the surjectivity of $\kappa^{\coh}$.
\end{rem}

The bundle $\V$ has fibre $\CC[x,y]/I_{\lambda}$ at the point $I_{\lambda}$. Therefore, the restriction of $[\V]$ to $I_{\lambda}$ is equal to the sum of monomials corresponding to the boxes of $\lambda$.
More generally, consider a polynomial
$$W\in \ZZ[q^{\pm 1}, t^{\pm 1}][x_0^{\pm 1}, \dots, x_{n-1}^{\pm 1}]^{\Sigma_n}\,.$$
To compute the restriction $\kappa^{\KTh}(W)_{|\lambda}$ one substitutes the monomials corresponding to boxes in the Young diagram of $\lambda$ as the variables $x_i$.
\begin{ex}
	For $\lambda=(2,1)$, i.e.
	$$\Delta_\lambda = \begin{ytableau}
		y  \\
		1 & x 
	\end{ytableau}$$
	we have 
	\begin{align*}
		\V(\lambda) &= 1 + x + y\,, & \PKT{k}{}(\lambda) &= 1 + x^k + y^k\,, & \Lambda^2\V(\lambda) &= xy + x + y\,. 
	\end{align*}
\end{ex}
For the empty partition $\lambda=\varnothing$ we set
$\V(\varnothing)= \Lambda^k\V(\varnothing) = \PKT{k}{}(\varnothing)=0$.
\\

Adams operators have a very simple description in terms of the Kirwan map. We present it in the following proposition, whose proof is straightforward.
\begin{pro} \label{pro:AdamsKirwan}
	Let $W \in \ZZ[q^{\pm 1}, t^{\pm 1}][x_0^{\pm 1}, \dots, x_{n-1}^{\pm 1}]^{\Sigma_n}$ be a symmetric polynomial. Then
	$$
	\psi^m(\kappa^{\KTh}(W(q,t,x_0, \dots, x_{n-1})))=
	\kappa^{\KTh}(W(q^m,t^m,x^m_0, \dots, x^m_{n-1})).
	$$
	In particular $\psi^m(\PKT{k}{})=\PKT{km}{} \,.$
\end{pro}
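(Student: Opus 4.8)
The plan is to reduce the statement to a computation with honest line bundles, where the three listed properties of $\psi^m$ (ring homomorphism, $\psi^m(\L)=\L^m$ on line bundles, naturality under pullback) apply verbatim. The Kirwan map is characterized by sending $x_i$ to the $\KTh$-theoretic Chern roots of $\V_n$, and the obstacle is that these ``Chern roots'' are not genuine classes on $\Hilb{n}$; I would bypass this via the splitting principle.

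Concretely, I would introduce the full flag bundle $p:\mathrm{Fl}(\V_n)\to\Hilb{n}$ of the tautological bundle, so that $p^*\V_n$ splits as a sum of $\TT$-equivariant line bundles $\L_0\oplus\dots\oplus\L_{n-1}$. For a flag bundle the pullback $p^*:\KTh_\TT(\Hilb{n})\to\KTh_\TT(\mathrm{Fl}(\V_n))$ is a split injection of $\KTh_\TT(pt)$-modules (the iterated projective bundle formula), so it suffices to verify the identity after applying $p^*$. Consistently with the example $\kappa^{\KTh}(e_k)=\Lambda^k\V_n$, the class $x_i$ pulls back to $[\L_i]$, whence $p^*\kappa^{\KTh}(W(x_0,\dots,x_{n-1}))=W([\L_0],\dots,[\L_{n-1}])$ is an honest polynomial expression in line-bundle classes.

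Now I would push $\psi^m$ through. By naturality $p^*\circ\psi^m=\psi^m\circ p^*$, so $p^*\psi^m(\kappa^{\KTh}(W))=\psi^m(W([\L_0],\dots,[\L_{n-1}]))$. Since $\psi^m$ is a ring homomorphism and sends each line-bundle class to its $m$-th power, $\psi^m(W([\L_0],\dots))=W([\L_0]^m,\dots,[\L_{n-1}]^m)=p^*\kappa^{\KTh}(W(x_0^m,\dots,x_{n-1}^m))$, which is the pullback of the right-hand side; injectivity of $p^*$ closes the argument. The ``in particular'' case is then immediate by taking $W=p_k$, since $p_k(x_0^m,\dots,x_{n-1}^m)=p_{km}(x_0,\dots,x_{n-1})$.

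The one genuinely delicate point, which I would treat most carefully, is the role of the equivariant parameters $q,t$ entering as coefficients of $W$. As classes of equivariant line bundles over a point they satisfy $\psi^m(q)=q^m$ and $\psi^m(t)=t^m$, so $\psi^m$ does \emph{not} fix them; the clean formula as stated is exactly right when $W$ has integer coefficients (as for the power sums $p_k$ used in the sequel), and for general $W$ the substitution on the right must be understood to raise $q,t$ as well. An alternative route that makes this transparent, and which I would mention, is to verify the identity after restriction to each fixed point $I_\lambda$ (legitimate by Proposition \ref{pro:localization1}): since $i_\lambda^*$ commutes with $\psi^m$ by naturality and $\kappa^{\KTh}(W)(\lambda)$ is obtained by substituting the monomials $q^it^j$ of the boxes of $\lambda$, everything reduces to the evident identity $\psi^m(q^at^b)=q^{am}t^{bm}=(q^at^b)^m$ on $\KTh_\TT(pt)=\ZZ[q^\pm,t^\pm]$.
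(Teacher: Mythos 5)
Your argument is correct. Note that the paper itself offers no proof here: it declares the statement ``straightforward'' and moves on, so there is nothing to compare line by line. Of your two routes, the fixed-point one is surely what the authors had in mind — the paper repeatedly computes $\kappa^{\KTh}(W)_{|\lambda}$ by substituting the box monomials $q^it^j$ for the $x_i$, restriction to $\Hilb{n}^\TT$ is injective by Proposition \ref{pro:localization1}, and $\psi^m$ commutes with $i_\lambda^*$ by naturality, so everything collapses to $\psi^m(q^at^b)=(q^at^b)^m$ on $\KTh_\TT(pt)$. The splitting-principle route is a legitimate alternative that buys independence from the torus action and from finiteness of the fixed locus, at the cost of invoking the equivariant projective bundle formula; either suffices. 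Your caveat about the coefficients is a genuine and correct observation: since $\psi^m(q)=q^m$ and $\psi^m(t)=t^m$, the displayed identity as literally stated holds only when the coefficients of $W$ lie in $\ZZ$, and for general $W\in\ZZ[q^{\pm1},t^{\pm1}][x_0^{\pm1},\dots,x_{n-1}^{\pm1}]^{\Sigma_n}$ the right-hand side must be read as also replacing $q,t$ by $q^m,t^m$. This imprecision is harmless for the paper, which only ever applies the proposition to the integer-coefficient power sums $p_k$ (giving $\psi^m(\PKT{k}{})=\PKT{km}{}$) and to $\pi_*[\Q]=\PKT{1}{}$ in the proof of Theorem \ref{tw:push}, but it is worth recording.
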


\section{Nested Hilbert Scheme}\label{s:nested}
\subsection{Definition and torus action}
\emph{The nested Hilbert scheme} $\Hilb{n,n+i}$ is a scheme parametrizing pairs of subschemes of $\Aa_\CC^2$ contained in one another, i.e.,
\[\Hilb{n,n+i}:=\{ (I,J) : I \supseteq J\} \subset \Hilb{n} \times \Hilb{n+i}\,. \]
It is not smooth unless $i=1$, and for $i=1$, the nested Hilbert scheme $\Hilb{n,n+1}$ is a smooth irreducible variety of dimension $2n+2$, see \cite{Cheah}.

The diagonal action of $\TT$ on the product $\Hilb{n} \times \Hilb{n+1}$ restricts to an action on $\Hilb{n, n+1}$. The fixed points of this action are indexed by pairs of Young diagrams contained in one another, which means that the diagram $\Delta_J$ is obtained from $\Delta_I$ by adding one box. We represent such pairs of diagrams by shading a box in the bigger diagram, representing the added box.

\begin{center}
	\begin{tabular}{ c c c }
		\ytableausetup{smalltableaux}
		\ydiagram[*(white)]
		{1,1}
		*[*(gray)]{0,1+1}& represents the pair &
		$(I,J) = ( \langle y^2, x \rangle, \langle y^2, xy, x^2 \rangle )\,.$
		
	\end{tabular}
\end{center}
Therefore, a fixed point in $\Hilb{n,n+1}^\TT$ is uniquely determined by a partition $\lambda$ of $n+1$ and a choice of a corner box $c\in C(\lambda)$.

\subsection{Restriction to a one-dimensional subtorus} \label{s:subtorus}
Consider a one-dimensional coordinate subtorus of $\TT$:
$$
\Tt=\{(1,t_2)\in \TT\}\,.
$$
A reasoning analogous to the one in Section 1.2 of \cite{Evain2} implies the following description of the fixed point set $\Hilb{n,n+1}^\Tt$.
\begin{pro}
	The fixed point set $\Hilb{n,n+1}^\Tt$ is a disjoint union of affine spaces. Each affine space contains exactly one fixed point of the big torus $\TT$.
\end{pro}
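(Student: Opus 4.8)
The plan is to make the $\Tt$-fixed points completely explicit and read the affine-space structure directly off the resulting coordinates, following the strategy of \cite{Evain2}. Since $\Tt=\{(1,t_2)\}$ scales only $y$, an ideal $I\lhd\CC[x,y]$ is $\Tt$-fixed exactly when it is homogeneous for the grading of $\CC[x,y]=\bigoplus_{j\ge 0}\CC[x]\,y^{j}$ by $y$-degree. Writing $I=\bigoplus_{j\ge 0}I_j\,y^{j}$ with $I_j\lhd\CC[x]$, the ideal axioms become $xI_j\subseteq I_j$ (automatic) and $I_j\subseteq I_{j+1}$ (multiplication by $y$). As $\CC[x]$ is a principal ideal domain, $I_j=(g_j)$ for a unique monic $g_j$, the inclusion $I_j\subseteq I_{j+1}$ becomes the divisibility $g_{j+1}\mid g_j$, and finite colength forces $g_j=1$ for $j\gg 0$ with $\sum_j\deg g_j=n$. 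Thus $\mu_j:=\deg g_j$ is a partition of $n$, the basic discrete invariant of a fixed ideal.

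First I would dispose of the non-nested scheme to fix notation. Setting $h_k:=g_{k-1}/g_k$, a monic polynomial of degree $\mu_{k-1}-\mu_k$, the chain is recovered from $g_j=\prod_{k>j}h_k$, and conversely the $h_k$ are arbitrary monic polynomials of the prescribed degrees subject to no further relation. Hence the locus of $\Tt$-fixed ideals with invariant $\mu$ is a product of affine spaces $\prod_k\Aa^{\mu_{k-1}-\mu_k}\cong\Aa^{\mu_0}$. The partition $\mu$ is the vector of ranks of the $\Tt$-weight spaces of the tautological bundle restricted to the fixed locus, hence locally constant, so these loci are precisely the connected components of $\Hilb{n}^{\Tt}$.

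For the nested scheme a fixed point is a pair $(I,J)$ of $y$-homogeneous ideals with $J\subseteq I$ and $\dim I/J=1$; by homogeneity $I/J$ lies in a single $y$-degree $d$, so the generators of $J$ agree with those of $I$ except that $g'_d=g_d\cdot(x-c)$ for some $c\in\Aa^1$. The chain condition for $J$ then collapses to the single requirement $(x-c)\mid h_d$, and $\deg h_d\ge 1$ is exactly the statement that the added box is a corner. The discrete invariant is now the pair consisting of the partition of $J$ together with the degree $d$, which is the same datum as a partition of $n+1$ with a choice of corner box; as above it is locally constant, so it labels the connected components and already matches the indexing of $\Hilb{n,n+1}^{\TT}$. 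The $\Tt$-fixed locus is moreover smooth, being the fixed locus of a reductive group acting on the smooth $\Hilb{n,n+1}$, so the reduced description below is the scheme-theoretic one.

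The one real obstacle is to see that a component survives as an affine space despite the coupling constraint $h_d(c)=0$, since fibring naively over the space of $I$'s (or of $J$'s) yields a finite incidence condition rather than a visible affine structure. I would linearise it by the substitution $h_d=(x-c)\,\tilde h_d$: the incidence variety $\{(h_d,c):h_d\text{ monic of degree }e,\ h_d(c)=0\}$ is isomorphic to $\Aa^{e}$ through $(\tilde h_d,c)\mapsto\big((x-c)\tilde h_d,\,c\big)$, the inverse being the regular map $\tilde h_d=h_d/(x-c)$; when $d=0$ the parameter $c$ is simply free. With the remaining $h_k$ unconstrained, the component becomes a product of affine spaces. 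Finally, the residual torus $\Tq$ acts diagonally on these coordinates (the coefficient of $x^{e-i}$ in each monic factor, and $c$ itself, carry strictly positive weights), so the origin---the monomial pair with all $h_k=x^{\deg h_k}$ and $c=0$---is the unique $\Tq$-fixed point of the component, hence its unique $\TT$-fixed point, and every point flows to it as the parameter tends to $0$.
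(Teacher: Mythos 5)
Your proof is correct and takes the same route the paper delegates to \cite{Evain2}: decomposing a $\Tt$-fixed ideal by $y$-degree into a divisibility chain of monic polynomials $g_j\in\CC[x]$, identifying each discrete stratum with a product of affine spaces of monic factors $h_k$, and locating the unique $\TT$-fixed point as the origin via the positive $\Tq$-weights on the coefficients. The only step beyond the non-nested case, the linearisation of the incidence condition $h_d(c)=0$ by the substitution $h_d=(x-c)\tilde h_d$, is handled correctly (the inverse is regular because polynomial division by $x-c$ has coefficients polynomial in $c$), so the argument is complete.
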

\begin{cor}
	The restriction map
	$$\KTh_\Tt(\Hilb{n,n+1}^\Tt) \xtto{\simeq} \KTh_\Tt(\Hilb{n,n+1}^\TT)$$
	is an isomorphism.
\end{cor}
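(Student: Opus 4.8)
The plan is to reduce the statement to a cell-by-cell computation using the decomposition of $\Hilb{n,n+1}^\Tt$ supplied by the preceding proposition. First I would write
$$\Hilb{n,n+1}^\Tt=\bigsqcup_{j} A_j\,,$$
where each $A_j\cong\Aa^{d_j}$ is an affine space containing a single $\TT$-fixed point $p_j$; consequently $\Hilb{n,n+1}^\TT=\{p_j\}_j$ is exactly the set of these distinguished points. Since equivariant $\KTh$-theory sends a disjoint union to the direct sum of the $\KTh$-theories of its components, and since the restriction map to $\Hilb{n,n+1}^\TT$ is compatible with this decomposition, it suffices to prove that for each $j$ the restriction
$$\KTh_\Tt(A_j)\to\KTh_\Tt(p_j)$$
along the inclusion $p_j\hookrightarrow A_j$ is an isomorphism.

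The key point is that $A_j$ lies inside the $\Tt$-fixed locus, so $\Tt$ acts trivially on $A_j$. For a trivial torus action every equivariant vector bundle decomposes into its isotypic components, which yields a natural identification $\KTh_\Tt(A_j)\cong\Rep(\Tt)\otimes_\ZZ\KTh(A_j)$, and likewise for the point $p_j$. Under these identifications the restriction map becomes $\mathrm{id}_{\Rep(\Tt)}\otimes i^*$, where $i^*\colon\KTh(A_j)\to\KTh(p_j)$ is the nonequivariant restriction. By the homotopy invariance ($\Aa^1$-invariance) of algebraic $\KTh$-theory one has $\KTh(A_j)=\KTh(\Aa^{d_j})\cong\ZZ=\KTh(p_j)$, with the isomorphism induced precisely by restriction to $p_j$. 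Tensoring with $\Rep(\Tt)$ shows that $\KTh_\Tt(A_j)\to\KTh_\Tt(p_j)$ is an isomorphism, and summing over $j$ gives the claim.

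The main thing to be careful about is the per-cell isomorphism, i.e.\ the reduction to ordinary homotopy invariance. This is clean precisely because the subtorus $\Tt$ acts trivially on each cell, so there are no equivariant corrections to the contraction of $A_j\cong\Aa^{d_j}$ onto its $\TT$-fixed point; in particular no projectivity hypothesis or localisation theorem is needed here, in contrast to the general setup of Section~\ref{s:Ktheory}. The residual torus $\TT/\Tt$ still acts on $\Hilb{n,n+1}^\Tt$ and its fixed points are the $p_j$, but this plays no role in the $\Tt$-equivariant statement and becomes relevant only when one later promotes these computations to the full torus $\TT$.
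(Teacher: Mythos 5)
Your argument is correct and is exactly the (implicit) argument the paper intends: the corollary is stated without proof as an immediate consequence of the preceding proposition, and the intended justification is precisely your reduction to the components $A_j\cong\Aa^{d_j}$ with trivial $\Tt$-action, the identification $\KTh_\Tt(A_j)\cong\Rep(\Tt)\otimes\KTh(A_j)$, and homotopy invariance of algebraic $\KTh$-theory. Nothing is missing.
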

The Bia{\l}ynicki-Birula decomposition and the localization theorem imply the following result.
\begin{pro}
	The equivariant $\KTh$-theory of the nested Hilbert scheme $\KTh_\Tt(\Hilb{n,n+1})$ is a free $\KTh_\Tt(pt)$-module.
	The restriction to the fixed point set induces an inclusion of rings
	$$\KTh_\Tt(\Hilb{n,n+1}) \mono \KTh_\Tt(\Hilb{n,n+1}^\Tt)\,. $$
\end{pro}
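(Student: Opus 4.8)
The plan is to prove the two assertions in turn: first that $\KTh_\Tt(\Hilb{n,n+1})$ is a free $\KTh_\Tt(pt)$-module, by an induction on Białynicki--Birula skeleta, and then that the restriction to the fixed locus is an injective ring homomorphism, as a formal consequence of freeness and Thomason's localization theorem. The argument is the one outlined in Remark \ref{rem:BB}, the only new feature being that the equivariant torus is now the one-dimensional subtorus $\Tt$, whose fixed locus $\Hilb{n,n+1}^\Tt$ is the positive-dimensional disjoint union of affine spaces described in the preceding Proposition.

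For the freeness I would fix a generic one-parameter subgroup $\sigma(s)=(s,s^N)$ of $\TT$ with $N\gg0$, so that $\Hilb{n,n+1}^\sigma=\Hilb{n,n+1}^\TT$ is finite. Although $\sigma$ is a subgroup of the full torus $\TT$ rather than of $\Tt$, it commutes with $\Tt$ since $\TT$ is abelian, so its attracting cells are $\Tt$-invariant and the induction below computes the $\Tt$-equivariant theory. I would first check that these cells cover $\Hilb{n,n+1}$: for a nested pair $(I,J)$ the limits $\lim_{s\to0}\sigma(s)\cdot I$ and $\lim_{s\to0}\sigma(s)\cdot J$ exist in $\Hilb{n}$ and $\Hilb{n+1}$ by the non-nested statement (\cite{ESBBdec}, \cite[Lemma 1.2.4]{YiNing}), and since the nesting condition $I\supseteq J$ is closed, the limiting pair again lies in $\Hilb{n,n+1}$. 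By the Białynicki--Birula theorem each cell is $\TT$-equivariantly, hence $\Tt$-equivariantly, isomorphic to the attracting part of the tangent space at its center, a linear $\Tt$-representation on an affine space; homotopy invariance of equivariant $\KTh$-theory then gives $\KTh_\Tt(C)\simeq\KTh_\Tt(pt)$ for each such cell $C$.

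Ordering the cells compatibly with the closure order yields a filtration $\varnothing=X_{-1}\subset X_0\subset\dots\subset X_r=\Hilb{n,n+1}$ by closed subvarieties whose successive differences $X_i\setminus X_{i-1}$ are single cells. I would then run the induction exactly as in \cite[Section 9]{FRW}: the localization long exact sequence of each pair $(X_i,X_{i-1})$ in $\Tt$-equivariant $\KTh$-theory, combined with the rank-one computation above, forces the connecting homomorphisms to vanish; the resulting short exact sequences split because their right-hand terms are free, and inductively $\KTh_\Tt(\Hilb{n,n+1})$ is free over $\KTh_\Tt(pt)$, with a basis indexed by the cells, equivalently by $\Hilb{n,n+1}^\TT$.

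For the injectivity, freeness makes $\KTh_\Tt(\Hilb{n,n+1})$ torsion-free over $\KTh_\Tt(pt)$, hence it embeds into its localization at $S$. By Thomason's localization theorem \cite[Theorem 2.1]{Tho} restriction becomes an isomorphism after inverting $S$, and the composite
\[
\KTh_\Tt(\Hilb{n,n+1})\hookrightarrow S^{-1}\KTh_\Tt(\Hilb{n,n+1})\xrightarrow{\ \simeq\ }S^{-1}\KTh_\Tt(\Hilb{n,n+1}^\Tt)
\]
is injective and factors through the restriction map, so the latter is injective as well; it is a ring homomorphism, being a pullback along an inclusion. I expect the genuine technical content to be concentrated in the vanishing of the connecting homomorphisms, i.e. the cellular-fibration step of \cite[Section 9]{FRW} that upgrades the skeletal filtration into a direct-sum decomposition; the covering of $\Hilb{n,n+1}$ by attracting cells and the $\Tt$-linearity of those cells reduce cleanly to the non-nested case and to the Białynicki--Birula theorem, while the injectivity statement is then purely formal.
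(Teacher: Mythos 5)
Your argument is correct and follows essentially the same route the paper intends: the Białynicki--Birula decomposition with respect to a generic one-parameter subgroup $\sigma(s)=(s,s^N)$ of $\TT$ (whose cells are $\TT$-, hence $\Tt$-, invariant and cover $\Hilb{n,n+1}$ because limits exist in each factor and the nesting condition is closed), an induction on the skeleta as in \cite[Section 9]{FRW} to get freeness, and then Thomason's localization theorem plus torsion-freeness to deduce injectivity of restriction to the fixed locus. This is precisely the content of Remark \ref{rem:BB} adapted to the subtorus $\Tt$, which is all the paper itself supplies.
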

\begin{cor}\label{cor:localization2}
	The composition
	$$
	\KTh_\Tt(\Hilb{n,n+1}) \mono\KTh_\Tt(\Hilb{n,n+1}^\Tt) \xtto{\simeq} \KTh_\Tt(\Hilb{n,n+1}^\TT)
	$$
	is a monomorphism.
\end{cor}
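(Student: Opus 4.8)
The plan is to chain together the two preceding results. The previous Proposition states that restriction induces an inclusion of rings $\KTh_\Tt(\Hilb{n,n+1}) \mono \KTh_\Tt(\Hilb{n,n+1}^\Tt)$; this is the first arrow of the composition, and it is injective. The earlier Corollary states that the restriction $\KTh_\Tt(\Hilb{n,n+1}^\Tt) \xtto{\simeq} \KTh_\Tt(\Hilb{n,n+1}^\TT)$ is an isomorphism; this is the second arrow. Both maps are the natural restriction homomorphisms associated to the inclusions of fixed loci, and since $\Tt\subset\TT$ we have $\Hilb{n,n+1}^\TT\subseteq\Hilb{n,n+1}^\Tt$, so by contravariant functoriality of restriction along the inclusions $\Hilb{n,n+1}^\TT \mono \Hilb{n,n+1}^\Tt \mono \Hilb{n,n+1}$ the displayed composition is precisely restriction from the nested Hilbert scheme to the big-torus fixed locus, factored through the $\Tt$-fixed locus.

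Once these two facts are in place, the statement is immediate: the composition of an injective ring homomorphism with an isomorphism is again injective. There is no genuine obstacle here, as the entire substance of the argument is already contained in the two cited results — the first resting on the Bia{\l}ynicki-Birula decomposition together with the localization theorem (cf. Remark \ref{rem:BB}), and the second on the structural description of $\Hilb{n,n+1}^\Tt$ as a disjoint union of affine spaces, each containing exactly one $\TT$-fixed point. The only point worth recording is the functoriality observation above, which guarantees that the two restriction maps genuinely compose to the restriction to $\Hilb{n,n+1}^\TT$; this is what makes the Corollary a useful computational device, since it permits computing restrictions of classes to the finitely many $\TT$-fixed points by passing through the simpler $\Tt$-fixed geometry.
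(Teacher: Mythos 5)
Your proposal is correct and matches the paper's (implicit) argument exactly: the corollary is stated without proof precisely because it is the composition of the inclusion from the preceding Proposition with the isomorphism from the earlier Corollary, and a composite of an injection with an isomorphism is injective. The functoriality remark you add is a reasonable sanity check but not needed beyond what the paper already records.
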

As a direct consequence, in order to prove equality of two classes in $\KTh_\Tt(\Hilb{n,n+1})$ it is enough to check that their restrictions to every fixed point of $\TT$ are equal.

\subsection{Tangent space}
Let $(I,J)$ be a point in $\Hilb{n,n+1}$.
A vector in the tangent space at $(I,J)$ is given by a pair of $\CC[x,y]$-module homomorphisms
$$\varphi_1 \in T_I \Hilb{n}\simeq\Hom_{\CC[x,y]}(I,\CC[x,y]/I)\,,\qquad \varphi_2 \in T_J \Hilb{n+1}\simeq \Hom_{\CC[x,y]}(J,\CC[x,y]/J)\,,$$
compatible with the inclusion $J \subseteq I$, so that the diagram 
\begin{equation} \label{diag:tangent}
	\begin{tikzcd}[row sep=large]
		J & I \\
		{\mathbb{C}[x,y]/J} & {\mathbb{C}[x,y]/I}
		\arrow[hook, from=1-1, to=1-2]
		\arrow[two heads, from=2-1, to=2-2]
		\arrow["{\varphi_1}", from=1-2, to=2-2]
		\arrow["{\varphi_2}", from=1-1, to=2-1]
	\end{tikzcd}
\end{equation}
is commutative. 

The weights of the torus action on the tangent space at a fixed point are described in \cite[Propositions 15 and 16]{Evain} (see also \cite[Lemma 3.1]{FT}).  Here we present an alternative description, better suited to our computations. Let $(I_{\lambda}, J_{\mu})$ be a $\TT$-fixed point in $\Hilb{n, n+1}$, in particular $\mu \in \lambda[1]$. Recall that every box $\bullet$ in $\Delta_{\mu}$ determines two of the $2(n+1)$ tangent weights at $J_{\mu}$ in $\Hilb{n+1}$, i.e., $(a+1,-b)$ and $(-a,b+1)$, where $a = a_{\mu}(\bullet), b=b_{\mu}(\bullet)$. One can identify these weights as minus the slopes of the two arrows depicted below
\begin{center}
	\begin{tikzpicture}[line width=1.1pt, scale=0.5]
		\draw[step=1cm,gray,very thin] (-0.5,-0.5) grid (3.5,4.5);
		\draw (0,0) -- (1,0) -- (1,1) -- (0,1) -- (0,0);
		\draw (1,0) -- (2,0) -- (2,1) -- (1,1) -- (1,0);
		\draw (0,1) -- (1,1) -- (1,2) -- (0,2) -- (0,1);
		\draw (0,2) -- (1,2) -- (1,3) -- (0,3) -- (0,2);
		\filldraw [black] (0.5,0.5) circle [radius=4pt];
		\draw [->] (0.5,3.5) -- (1.5,0.5);
		\draw [->] (2.5,0.5) -- (0.5,2.5);
	\end{tikzpicture}  
\end{center}

One arrow starts at the box just above the last box in the column in which the box~$\bullet$ is located and ends at the right-most box in the row of~$\bullet$, while the other arrow starts at the box one to the right of the  right-most box in the row of~$\bullet$ and ends at the top box in the column on~$\bullet$. In particular, every box contributes one weight corresponding to an arrow going south-east (or south) and one arrow going north-west (or west). \\

The tangent weights at the point $(I_{\lambda}, J_{\mu})\in \Hilb{n, n+1}^\TT$ are computed as follows. The diagram $\Delta_{\mu}$ is obtained from $\Delta_{\lambda}$ by adding one box, denote it by
$\tiny{\ydiagram[*(gray)]{1}}$. Let $i_0, j_0$ be the indices of the row and the column in which the added box is located.
Take the set of tangent weights to $\Hilb{n+1}$ at $J_{\mu}$. Recall that each of the weights corresponds to some box in $\Delta_{\mu}$.
For every box below $\tiny{ \ydiagram[*(gray)]{1}}$ replace the weight $(-a, b+1)$ by $(-a,b)$, i.e. shorten the south-east arrow vertically by $1$, and for every box left of $\tiny{\ydiagram[*(gray)]{1}}$ replace the weight $(a+1, -b)$ by $(a,-b)$, i.e. shorten the north-west arrow horizontally by $1$. Keep all the remaining weights intact. For example (the added box is shaded grey) the south-east arrow determined by the box $(0,0)$:

\begin{center}
	\begin{tabular}{ c c c }
		\begin{tikzpicture}[baseline,line width=1.1pt, scale=0.5]
			\draw[step=1cm,gray,very thin] (-0.5,-1.5) grid (3.5,3.5);
			\draw (0,0) rectangle (1,1);
			\draw (1,-1) rectangle (2,0);
			\draw (0,0) rectangle (1,-1);
			\filldraw[fill=black!40!white, draw=black] (0,1) rectangle (1,2);
			\filldraw [black] (0.5,-0.5) circle [radius=4pt];
			\draw [->] (0.5,2.5) -- (1.5,-0.5);
		\end{tikzpicture}  
		& is replaced by &
		\begin{tikzpicture}[baseline,line width=1.1pt, scale=0.5]
			\draw[step=1cm,gray,very thin] (-0.5,-1.5) grid (3.5,3.5);
			\draw (0,0) rectangle (1,1);
			\draw (1,-1) rectangle (2,0);
			\draw (0,0) rectangle (1,-1);
			\filldraw[fill=black!40!white, draw=black] (0,1) rectangle (1,2);
			\filldraw [black] (0.5,-0.5) circle [radius=4pt];
			\draw [->] (0.5,1.5) -- (1.5,-0.5);
		\end{tikzpicture}  
	\end{tabular}
\end{center}

The reasoning behind this procedure is simple. Each weight represented by an arrow is in fact a $\CC[x,y]$-module homomorphism which sends the generator of $J_{\mu}$ at the tail of the arrow to the generator of $\CC[x,y]/J_{\mu}$ at the head of the arrow (see \cite{Haiman2} for details). Unless the box defining the weight is in row $i_0$ or column $j_0$, this is also a well-defined homomorphism from $I_{\lambda}$ to $\CC[x,y]/I_{\lambda}$, making diagram (\ref{diag:tangent}) commutative. For boxes in row $i_0$ or column $j_0$, one of the associated arrows still defines a good tangent vector at $I_{\lambda}$, but the other one does not because it does not determine where the monomial corresponding to the shaded box should be sent - this is the arrow that we modify. See \cite[Section 2.6]{Cheah} for an algebraic computation of the tangent weights which justifies the above description.

\begin{ex} \label{table-weights-H4}
The following table compares the tangent weights at two chosen fixed points in $\Hilb{4}$ and $\Hilb{3,4}$, diagrams with a marked box are in $\Hilb{3,4}$. Each row lists the two weights associated to a chosen box. \\

\begin{center}

\begin{tabular}{|p{0.8cm}|p{2.8cm}|p{2.8cm}|p{2.8cm}|p{2.8cm}|}
\hline 
 & \vspace{0.25em} \ytableausetup{smalltableaux}
\begin{ytableau}
1 \\
 2  \\
3 & 4
\end{ytableau}  \vspace{0.25em} &  \vspace{0.25em} \ytableausetup{smalltableaux}
\begin{ytableau}
*(gray) 1 \\
 2  \\
3 & 4
\end{ytableau}  \vspace{0.25em} & \vspace{0.5em} \ytableausetup{smalltableaux}
\begin{ytableau}
1 & 2 \\
3 & 4
\end{ytableau}  & \vspace{0.5em} \ytableausetup{smalltableaux}
\begin{ytableau}
1 & *(gray) 2 \\
3 & 4
\end{ytableau} \\ \hline  
  {\tiny \young(1)} & \vspace{0.5em}$(0,1), (1,0)$ &\vspace{0.5em} $(0,1), (1,0)$  & \vspace{0.5em} $(-1,1), (2,0)$ &\vspace{0.5em} $(-1,1)$, $\color{red} (1,0)$  \\ \hline
 {\tiny \young(2)} & \vspace{0.5em} $(0,2), (1,-1)$  & \vspace{0.5em} $\color{red}(0,1)$, $(1,-1)$& \vspace{0.5em} $(0,1), (1,0)$ &\vspace{0.5em} $(0,1), (1,0)$  \\ \hline
  {\tiny \young(3)} & \vspace{0.5em} $(-1,3),(2,-2)$ & \vspace{0.5em} $\color{red}(-1,2)$, $(2,-2)$ & \vspace{0.5em} $(-1,2), (2,-1)$ &\vspace{0.5em}$(-1,2), (2,-1)$  \\ \hline
  {\tiny \young(4)} & \vspace{0.5em}$(0,1), (1,0)$  & \vspace{0.5em} $(0,1), (1,0)$ & \vspace{0.5em} $(0,2), (1,-1)$ & \vspace{0.5em} $\color{red} (0,1)$, $(1,-1)$  \\ \hline
\end{tabular}
\end{center}

\end{ex}

\subsection{Tautological line bundle}
Let
$$p: \Hilb{n,n+1} \to \Hilb{n}\,, \qquad \pi: \Hilb{n,n+1} \to \Hilb{n+1}$$
be the restrictions of the projections to the first and the second factor in the product $\Hilb{n} \times \Hilb{n+1}$. Let $\V_n, \V_{n+1}$ be the universal bundles on the respective Hilbert schemes. The pullback $\pi^{\ast} \V_{n+1}$  admits canonical epimorphism onto the bundle $p^{\ast} \V_n$. The kernel of this map is a line bundle on $\Hilb{n, n+1}$. We denote it by
\[\Q_n:=\ker(\pi^{\ast} \V_{n+1} \onto p^{\ast} \V_n)\,.\]
This bundle plays a central role in this paper. Intuitively, it corresponds to "the added point"; its fibre over the point $(I,J) \in \Hilb{n, n+1}$ is equal to $I/J$. Suppose that $\lambda$ is a nonempty partition and $c=(k,l)$ is its corner. Restriction of $\Q_n$ to the  fixed point $(\lambda,c)\in \Hilb{n,n+1}^\TT$ is equal to
$$ (\Q_n)_{|(\lambda,c)}=q^kt^l \in \ZZ[q,t]\,.$$

Graphically, the setup for the rest of the paper can be summarized as follows:
\[\begin{tikzcd}[column sep=small,row sep=large]
	& {\mathbb{C}[x,y]/I} && {\mathbb{C}[x,y]/J} & {I/J} \\
	& {p^{\ast}\V_n} && {\pi^{\ast}\V_{n+1}} & {\Q_n} \\
	{\V_n} && {\Hilb{n,n+1}} && {\V_{n+1}} \\
	& {\Hilb{n}} && {\Hilb{n+1}}
	\arrow["p", from=3-3, to=4-2]
	\arrow["\pi"', from=3-3, to=4-4]
	\arrow[from=3-5, to=4-4]
	\arrow[from=3-1, to=4-2]
	\arrow[from=2-2, to=3-3]
	\arrow[from=2-2, to=3-1]
	\arrow[from=2-4, to=3-3]
	\arrow[from=2-4, to=3-5]
	\arrow["\lrcorner"{anchor=center, pos=0.125, rotate=-45}, draw=none, from=2-2, to=4-2]
	\arrow["\lrcorner"{anchor=center, pos=0.125, rotate=-45}, draw=none, from=2-4, to=4-4]
	\arrow[two heads, from=2-4, to=2-2]
	\arrow[ from=2-5, to=2-4]
	\arrow[two heads, from=1-4, to=1-2]
	\arrow[ from=1-5, to=1-4]
	\arrow[hook, from=1-5, to=2-5]
	\arrow[hook, from=1-4, to=2-4]
	\arrow[hook, from=1-2, to=2-2]
\end{tikzcd}\]

\subsection{Pushforward in the equivariant K-theory}\label{s:pushK}
 The map $\pi:\Hilb{n,n+1} \to \Hilb{n+1}$ is proper. We will be interested in the pushforwards $\pi_*(\Q_n^m) \in \KTh_\TT(\Hilb{n+1})$. We may apply the LRR formula (Theorem \ref{tw:LRR}).  Let us recall that a fixed point in $\Hilb{n,n+1}^\TT$ is uniquely determined by a partition $\lambda$ of $n+1$ and a choice of a corner box $c\in C(\lambda)$.

\begin{pro} \label{cor:LRR1}
	Let \hbox{$\mathcal{E}\in \KTh_{\TT}(\Hilb{n,n+1})$} be an arbitrary class and $\lambda$ be a partition of $n+1$. The push-forward $\pi_{\ast}\mathcal{E}$ can be computed by summing up the local contributions at the fixed points, i.e.
	\begin{equation}\label{eq:push}
		(\pi_{\ast}\mathcal{E})_{|\lambda} = \sum_{c\in C(\lambda)} \mathcal{E}_{|(\lambda,c)} \cdot \frac{\eu(T_{\lambda}\Hilb{n+1})}{\eu(T_{(\lambda,c)}\Hilb{n,n+1})}\,,
	\end{equation} 
\end{pro}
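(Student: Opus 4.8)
The plan is to read this off directly from the Lefschetz--Riemann--Roch theorem (Theorem \ref{tw:LRR}), applied to the proper projection $\pi$. Concretely, I would take $X=\Hilb{n,n+1}$, $Y=\Hilb{n+1}$, $p=\pi$, target fixed point $y=I_\lambda$, and input class $\alpha=\mathcal{E}$. The whole content of the proposition is the identification of the fixed-point fibre over $I_\lambda$ together with an algebraic rearrangement of the LRR identity.

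First I would verify the hypotheses of Theorem \ref{tw:LRR}. Both $\Hilb{n,n+1}$ and $\Hilb{n+1}$ are smooth (of dimensions $2n+2$ and $2(n+1)$), their $\TT$-fixed loci are finite---indexed respectively by pairs $(\lambda,c)$ with $\lambda\dashv n+1$ and $c\in C(\lambda)$, and by partitions of $n+1$---and $\pi$ is proper, as recalled at the start of Section \ref{s:pushK}. I would stress that LRR as stated requires only properness of the \emph{map}, not projectivity of the \emph{varieties}, so the fact that the Hilbert schemes are merely quasi-projective is not an obstacle; the needed localization in this non-projective setting is supplied by the Bia\l ynicki--Birula argument of Remark \ref{rem:BB}.

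Next I would pin down the fixed-point fibre. Since $\pi$ records the larger subscheme (the second factor), at the fixed point $(\lambda,c)$ it returns $\pi(\lambda,c)=I_\lambda$, whence
$$\pi^{-1}(I_\lambda)\cap \Hilb{n,n+1}^\TT=\{(\lambda,c):c\in C(\lambda)\}\,.$$
Applying Theorem \ref{tw:LRR} then gives
$$\frac{(\pi_*\mathcal{E})_{|\lambda}}{\eu(T_\lambda \Hilb{n+1})}=\sum_{c\in C(\lambda)}\frac{\mathcal{E}_{|(\lambda,c)}}{\eu(T_{(\lambda,c)}\Hilb{n,n+1})}$$
in $S^{-1}\KTh_\TT(pt)$. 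Multiplying through by $\eu(T_\lambda\Hilb{n+1})$---which is invertible after localization, being a product of factors $1-w^{-1}$ over the nontrivial tangent weights $w$ at the isolated fixed point $I_\lambda$---yields exactly \eqref{eq:push}.

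I expect no serious obstacle here: this is essentially a corollary of LRR, and the equality \eqref{eq:push}, though written with denominators, is an equality in the fraction field whose left-hand side $(\pi_*\mathcal{E})_{|\lambda}$ is an honest element of $\ZZ[q^{\pm},t^{\pm}]$, so the sum on the right is forced to be integral. The only points that genuinely need a word of care are confirming the LRR hypotheses outside the projective setting (handled by properness of $\pi$ and Remark \ref{rem:BB}) and the invertibility of $\eu(T_\lambda\Hilb{n+1})$, both of which are routine; injectivity of restriction to fixed points (Proposition \ref{pro:localization1}) then guarantees that \eqref{eq:push} determines $\pi_*\mathcal{E}$ as a class in $\KTh_\TT(\Hilb{n+1})$.
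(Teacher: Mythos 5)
Your proposal is correct and matches the paper exactly: the paper states this proposition as an immediate consequence of the Lefschetz--Riemann--Roch formula (Theorem \ref{tw:LRR}) applied to the proper map $\pi$, with the fixed-point fibre over $I_\lambda$ identified as $\{(\lambda,c):c\in C(\lambda)\}$ and the Euler class cleared to the other side. Your additional remarks on the non-projective setting and the invertibility of $\eu(T_\lambda\Hilb{n+1})$ are consistent with the paper's Remark \ref{rem:BB} and localization setup.
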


The quotient of the Euler classes in Equation (\ref{eq:push}) is a rational function in variables $q, t$. For a corner $c$ in a partition $\lambda$ we use the notation 
\[r_{\lambda,c}:= \frac{\eu(T_{\lambda}\Hilb{n+1})}{\eu(T_{(\lambda,c)}\Hilb{n,n+1})}\in \ZZ(q,t)\,.\]
Both the numerator and the denominator consist of products of factors of the form $(1-q^{-i}t^{-j})$, where $(i,j)$ are the weights of the action of $\TT$ on tangent spaces to the respective Hilbert schemes. 
For technical reasons it is convenient to consider a rescaled variant of function $r_{\lambda,c}$. Let $\lambda$ be a nonempty partition and $c=(i,j)$ be its corner. We consider
\[\tilde{r}_{\lambda,c} := (\Q_n)_{|(\lambda,c)} \cdot \frac{\eu(T_{\lambda}\Hilb{n+1})}{\eu(T_{(\lambda,c)}\Hilb{n,n+1})}=q^it^j\cdot r_{\lambda,c}\in \ZZ(q,t)\,.\]
A lot of combinatorics of the tangent weights is hidden in the properties of the functions $r_{\lambda,c}$ and $\tilde{r}_{\lambda,c}$. At the end of the paper we include an appendix with all the necessary proofs of the technical properties of these functions.

In the case $\mathcal{E}=\Q_n^m$, Proposition \ref{cor:LRR1} may be restated as follows.

\begin{cor} \label{cor:LRR2}
	Let $\lambda$ be a nonempty partition. We have
	\begin{align*}
		\pi_*[\Q_n^m]_{|\lambda} &= \sum_{c\in C(\lambda)} \left(q^{k(c)}t^{l(c)}\right)^{m} \cdot r_{\lambda,c} 
		= \sum_{c\in C(\lambda)} \left(q^{k(c)}t^{l(c)}\right)^{m-1} \cdot \tilde{r}_{\lambda,c}\,,
	\end{align*}
	where $c=(k(c),l(c))$.
\end{cor}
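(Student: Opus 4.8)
The plan is to specialize the general localization formula of Proposition \ref{cor:LRR1} to the class $\mathcal{E}=\Q_n^m$ and then simplify the local contributions using the explicit restriction of the line bundle $\Q_n$ to the $\TT$-fixed points, which was recorded immediately before the statement.

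First I would apply Proposition \ref{cor:LRR1} verbatim with $\mathcal{E}=\Q_n^m$, which yields
$$(\pi_*[\Q_n^m])_{|\lambda}=\sum_{c\in C(\lambda)}[\Q_n^m]_{|(\lambda,c)}\cdot r_{\lambda,c}\,.$$
The only remaining task is to evaluate $[\Q_n^m]_{|(\lambda,c)}$. Since restriction to a fixed point is the pullback $i^*_{(\lambda,c)}$ along the inclusion, it is a ring homomorphism and hence multiplicative; because $\Q_n$ is a line bundle with $(\Q_n)_{|(\lambda,c)}=q^{k(c)}t^{l(c)}$ (where $c=(k(c),l(c))$), we get
$$[\Q_n^m]_{|(\lambda,c)}=\big((\Q_n)_{|(\lambda,c)}\big)^m=\big(q^{k(c)}t^{l(c)}\big)^m\,.$$
Substituting this into the previous display gives the first equality.

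For the second equality I would invoke the definition of the rescaled function $\tilde{r}_{\lambda,c}=q^{k(c)}t^{l(c)}\cdot r_{\lambda,c}$. Factoring one copy of $q^{k(c)}t^{l(c)}$ out of the $m$-th power,
$$\big(q^{k(c)}t^{l(c)}\big)^m\cdot r_{\lambda,c}=\big(q^{k(c)}t^{l(c)}\big)^{m-1}\cdot\big(q^{k(c)}t^{l(c)}\cdot r_{\lambda,c}\big)=\big(q^{k(c)}t^{l(c)}\big)^{m-1}\cdot\tilde{r}_{\lambda,c}\,,$$
summing over $c\in C(\lambda)$ then produces exactly the claimed right-hand side.

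There is no genuine obstacle here: the substantive input, namely the LRR-based localization formula of Proposition \ref{cor:LRR1} together with the weight computation $(\Q_n)_{|(\lambda,c)}=q^{k(c)}t^{l(c)}$, is already established, so this is a formal restatement. The only points requiring a word of care are that $m$ is allowed to be an arbitrary integer, in which case $\Q_n^m$ is still well defined since $\Q_n$ is invertible, and the identity is to be read in the localized ring $S^{-1}\KTh_\TT$; the multiplicativity of the restriction and the factorization in the second equality remain valid for all $m\in\ZZ$, including $m\le 0$.
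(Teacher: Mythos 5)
Your proposal is correct and matches the paper's (implicit) argument exactly: the corollary is simply Proposition \ref{cor:LRR1} specialized to $\mathcal{E}=\Q_n^m$, using multiplicativity of restriction together with $(\Q_n)_{|(\lambda,c)}=q^{k(c)}t^{l(c)}$ and the definition $\tilde{r}_{\lambda,c}=q^{k(c)}t^{l(c)}\cdot r_{\lambda,c}$. Your remark that the formula holds for all integers $m$ because $\Q_n$ is invertible is a sensible point of care, consistent with the paper's later use of negative powers.
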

\begin{pro} \label{pro:duality}
	Let $m$ be an integer. We have
	$$\pi_*[\Q_n^m]=(\pi_*[\Q_n^{1-m}])^*\,.$$
\end{pro}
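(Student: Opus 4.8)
The plan is to prove the identity after restricting to the $\TT$-fixed points, which suffices by Corollary~\ref{cor:localization2} (equivalently Proposition~\ref{pro:localization1}), since restriction to the fixed point locus is injective. The duality involution $*$ is a ring homomorphism on $\KTh_\TT(\Hilb{n+1})$ that commutes with restriction to a fixed point (dualising a bundle commutes with pullback), and on $\KTh_\TT(pt)=\ZZ[q^{\pm},t^{\pm}]$ it acts by $q\mapsto q^{-1}$, $t\mapsto t^{-1}$. Fix $\lambda\dashv n+1$. Expanding both sides over the corners $c=(k(c),l(c))\in C(\lambda)$ by means of Corollary~\ref{cor:LRR2}, I would compute
\[
\big((\pi_*[\Q_n^{1-m}])^*\big)_{|\lambda}
=\Big(\sum_{c\in C(\lambda)}\big((\Q_n)_{|(\lambda,c)}\big)^{1-m}\,r_{\lambda,c}\Big)^{*}
=\sum_{c\in C(\lambda)}\big((\Q_n)_{|(\lambda,c)}\big)^{m-1}\,r_{\lambda,c}^{*},
\]
using that $*$ is multiplicative and inverts one-dimensional characters. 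Since $\pi_*[\Q_n^m]_{|\lambda}=\sum_{c}\big((\Q_n)_{|(\lambda,c)}\big)^{m}r_{\lambda,c}$, the entire statement reduces, corner by corner, to the single local identity
\[
r_{\lambda,c}^{*}=(\Q_n)_{|(\lambda,c)}\cdot r_{\lambda,c}=\tilde r_{\lambda,c}.
\]

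To establish this key identity I would use the behaviour of the $\KTh$-theoretic Euler class under duality. A direct computation from the definition shows that for a $\TT$-representation $V$ of rank $r$ one has $\eu(V)^{*}=\eu(V^{*})=(-1)^{r}\det(V)\cdot\eu(V)$, where $\det V$ denotes the product of the weight characters. Applying this to the numerator and denominator of $r_{\lambda,c}=\eu(T_{\lambda}\Hilb{n+1})/\eu(T_{(\lambda,c)}\Hilb{n,n+1})$, the crucial point is that both tangent spaces have the \emph{same} rank $2(n+1)$ — this is exactly where the smoothness of the nested Hilbert scheme and the equality $\dim\Hilb{n,n+1}=\dim\Hilb{n+1}$ enter — so the factors $(-1)^{2(n+1)}$ cancel and
\[
r_{\lambda,c}^{*}=\frac{\det(T_{\lambda}\Hilb{n+1})}{\det(T_{(\lambda,c)}\Hilb{n,n+1})}\cdot r_{\lambda,c}.
\]
It then remains to evaluate the two determinants from the weight descriptions of Section~\ref{s:nested}. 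Each box of $\lambda$ contributes tangent weights $(a+1,-b)$ and $(-a,b+1)$, whose sum is $(1,1)$, so $\det(T_{\lambda}\Hilb{n+1})=q^{n+1}t^{n+1}$. Passing to the nested scheme only shortens the weights below and to the left of the added corner $c$: the total weight drops by $(0,1)$ for each of the $l(c)$ boxes below $c$ and by $(1,0)$ for each of the $k(c)$ boxes to its left, giving $\det(T_{(\lambda,c)}\Hilb{n,n+1})=q^{n+1-k(c)}t^{n+1-l(c)}$. Hence the determinant ratio is $q^{k(c)}t^{l(c)}=(\Q_n)_{|(\lambda,c)}$, which is precisely the key identity.

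I expect the main obstacle to be the careful bookkeeping of the tangent weights in the determinant computation, in particular verifying that a genuine corner $c$ has exactly $l(c)$ boxes directly below it and $k(c)$ boxes directly to its left, so that the modifications of Section~\ref{s:nested} accumulate to the stated shift. The structural reason behind the identity is that $\Q_n$ is the relative dualising bundle $\omega_\pi$ of the relative-dimension-zero morphism $\pi$: indeed $(\omega_\pi)_{|(\lambda,c)}=(\det T_{(\lambda,c)}\Hilb{n,n+1})^{-1}\cdot\det T_\lambda\Hilb{n+1}=q^{k(c)}t^{l(c)}$ for every fixed point, so $\omega_\pi=\Q_n$, and Grothendieck--Verdier duality yields $(\pi_*\alpha)^*=\pi_*(\alpha^*\otimes\omega_\pi)$ with $\alpha=\Q_n^{1-m}$. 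I would present the localization computation as the main proof, being self-contained within the tools already developed, and record the duality interpretation as the conceptual explanation.
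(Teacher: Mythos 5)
Your proof is correct, and its overall skeleton is the same as the paper's: restrict to the $\TT$-fixed points, expand both sides over the corners of $\lambda$ via Corollary \ref{cor:LRR2}, and reduce everything to one local identity per corner — your $r_{\lambda,c}^{*}=\tilde{r}_{\lambda,c}$ is exactly the paper's Corollary \ref{cor:Apinverse} in the form $\tau(\tilde{r}_{\lambda,c})=q^{-k(c)}t^{-l(c)}\tilde{r}_{\lambda,c}$. Where you genuinely diverge is in how that local identity is established. The paper derives it from the explicit factorization of $\tilde{r}_{\lambda,c}=R_{\lambda,c}$ into the rational functions $W_{a,b}$ and $U_{a,b}$ of the appendix, each of which satisfies $W_{a,b}(q,t)=qW_{a,b}(q^{-1},t^{-1})$, resp.\ $U_{a,b}(q,t)=tU_{a,b}(q^{-1},t^{-1})$ (Proposition \ref{pro:lim}(3), hence Proposition \ref{pro:1/x}). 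You instead invoke the general formula $\eu(V)^{*}=(-1)^{\rank V}\det(V)\cdot\eu(V)$, use that both tangent spaces have rank $2(n+1)$, and compute the determinant ratio $\det(T_{\lambda}\Hilb{n+1})/\det(T_{(\lambda,c)}\Hilb{n,n+1})=q^{k(c)}t^{l(c)}=(\Q_n)_{|(\lambda,c)}$ from the weight description of Section \ref{s:nested}; that count is right (a corner $(k,l)$ has exactly $k$ boxes to its left and $l$ below it, and each modification shifts the weight by $(1,0)$, resp.\ $(0,1)$). The two arguments are ultimately the same bookkeeping — the prefactors $q$ and $t$ in Proposition \ref{pro:lim}(3), taken over the $k$ boxes to the left and $l$ boxes below, multiply to precisely your determinant ratio — but your packaging is more structural and bypasses the appendix entirely. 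Your closing observation that the identity $\det(T_\pi^{*})_{|(\lambda,c)}=q^{k(c)}t^{l(c)}$ identifies $\Q_n$ with the relative dualizing bundle $\omega_\pi$, so that the proposition is an instance of Grothendieck--Verdier duality $(\pi_*\alpha)^{*}=\pi_*(\alpha^{*}\otimes\omega_\pi)$ for the generically finite map $\pi$, is a genuine conceptual gain that the paper does not record.
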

\begin{proof}
	It is enough to check the formula after restriction to the fixed point set. Let $\tau:\KTh^\TT(pt) \to \KTh^\TT(pt)$ be the map given by $\tau(t)=t^{-1}, \tau(q)=q^{-1}$.
	Due to Corollary \ref{cor:LRR2} it is enough to show that for a partition $\lambda$ we have
	$$
	\sum_{c\in C(\lambda)} \left(q^{k(c)}t^{l(c)}\right)^{1-m} \cdot \tau(\tilde{r}_{\lambda,c})=
	\sum_{c\in C(\lambda)} \left(q^{k(c)}t^{l(c)}\right)^{-m} \cdot \tilde{r}_{\lambda,c}
	$$
	It follows from Corollary \ref{cor:Apinverse}.
\end{proof}

\section{Nakajima operators}\label{s:Nakoperators}

Let
$$\HH:=\bigoplus_{n\geq 0} \coh^{\ast}(\Hilb{n};\QQ)$$
be the sum of the cohomology groups of the Hilbert schemes $\Hilb{n}$ for all $n$, with rational coefficients. The Nakajima--Grojnowski creation and annihilation operators equip the vector space $\HH$ with an action of the Heisenberg algebra, see \cite[Chapter 8]{Nak1}. These operators are defined by intersecting with fundamental classes of certain subvarieties in the product of two Hilbert schemes. Let
\[Q_i^n:= \{ (I,J) : I \supseteq J, \supp(I/J)=\{ x\} \textrm{ for some } x \in \Aa_\CC^2 \} \subset \Hilb{n} \times \Hilb{n+i},\]
i.e $Q_i^n$ consists of pairs of nested subschemes that differ by a subscheme of length $i$ concentrated in one point. Note that for $i=1$ we have $Q_1^n=\Hilb{n,n+1}$. Let $[Q_i^n]$ denote the fundamental class of $Q_i^n$ in the Borel--Moore homology of $\Hilb{n} \times \Hilb{n+i}$ (we need to take the Borel--Moore homology groups because the topological space $\Hilb{n}$ is not compact). Denote the projections in the product $\Hilb{n} \times \Hilb{n+i}$ by $p, q$ respectively, so that $p$ is the projection to $\Hilb{n}$ and $q$ is the projection to $\Hilb{n+i}$. The restriction of the map $q$ to $Q_i$ is proper. The creation operator
$$\qq_i: \coh^{\ast}(\Hilb{n};\QQ) \to \coh^{\ast + 2(i - 1)}(\Hilb{n+i};\QQ)$$
is defined by
\[\qq_i(\alpha) := PD^{-1}\big(q_{\ast}(p^{\ast}\alpha \cap [Q_i^n])\big)\,,\]
where $PD: \coh_{\ast}^{BM}(\Hilb{n+i}) \to \coh^{2n+2i-\ast}(\Hilb{n+i}) $ is Poincar\'{e} duality. The annihilation operators $\qq_{-i}$ are defined by the same subvariety $Q_i^n$ by replacing the roles of $p$ and $q$ (see \cite[Section 8.2]{Nak1} for the discussion of technical issues arising from the fact that $p$ is not proper). For a partition $\lambda=(\lambda_1, \dots, \lambda_l)$ let
$$\qq_{\lambda}:=\qq_{\lambda_l} \circ \dots \circ \qq_{\lambda_1}\,.$$
Let $\mathbb{1}$ be a generator of $\coh^0(\Hilb{0}) \simeq \QQ$. Applying operators $\qq_{\lambda}$ for all partitions $\lambda$ of $n$ to $\mathbb{1}$ one obtains an additive basis of $\coh^{\ast}(\Hilb{n})$. The same construction works for the equivariant cohomology with respect to the $\TT$-action, or for the $\TT$-equivariant Chow ring (see \cite{Evain} for a construction with all the technical details).

\begin{rem}
	In \cite{Nak1}, the subvarieties defining creation operators are constructed in the same way for any smooth projective surface $X$. They lie in $\Hilb{n}(X) \times \Hilb{n+i}(X) \times X$. Each class $\alpha$ in the cohomology of $X$ gives rise to another operator, by pushing down the product of the pullback of $\alpha$ with the class of the correspondence (see \cite[Section 8.3]{Nak1}). For the affine plane the cohomology class $\alpha$ is irrelevant, so we omit it in the construction. In particular, our operator $\qq_i$ is the same as Nakajima's $P_{\mathbb{1}}[i]$.
\end{rem}

Similarly as for cohomology, one can define certain operators on $\KTh$-theory using correspondences (for details of this construction see \cite[Section 5.2.20]{CG}, and for technical issues resulting from the non-projectiveness of $\Hilb{n}$ see \cite[Section 3.2]{SV}). In particular, an element $\mathcal{E} \in \KTh_{\TT}(\Hilb{n}\times \Hilb{n+1})$ with support proper over $\Hilb{n+1}$ defines a map~$\KTh_{\TT}(\Hilb{n}) \to \KTh_{\TT}(\Hilb{n+1})$ given by
\[\mathcal{E}_n \mapsto q_{\ast}(p^{\ast} \mathcal{E}_n \otimes \mathcal{E}).\]
This construction allows us to define the analogue of the cohomological creation operator. For an integer $m$ let $\qq^{\KTh}_{1,m}$ be the operator  corresponding to $\mathcal{E}=i_{\ast}\Q_n^{m}$, where $i$ denotes the inclusion of $\Hilb{n,n+1}$ into the product, so that $\qq^{\KTh}_{1,m}(\mathcal{E}_n) = q_{\ast}(p^{\ast} \mathcal{E}_n \otimes i_{\ast}\Q_n^{m})$.

\begin{rem}
	Operators $\qq^{\KTh}_{1,m}$ are the creation operators in the Ding-Iohara algebra action of \cite{FT}, or the elliptic Hall algebra considered in \cite{SV}. They correspond to operators $e_m$ in the \cite{FT} notation, and {\bfseries f}$_{1,m}$ in the \cite{SV} notation.
\end{rem}

\begin{rem}
	Let
	$p: \Hilb{n,n+1} \to \Hilb{n}$ and $\pi: \Hilb{n,n+1} \to \Hilb{n+1}$ be the standard maps. The projection formula implies that
	$$\qq_1(\alpha) = \pi_{\ast}p^{\ast}(\alpha) \in \coh_\TT^*(\Hilb{n+1})\,, \qquad \qq^{\KTh}_{1,m}(\mathcal{E}) = \pi_{\ast}\big(p^{\ast}\mathcal{E}\otimes\Q_n^m\big) \in \KTh_\TT(\Hilb{n+1})\,.$$

\end{rem}

\section{Pushforward of the tautological bundle}\label{s:Kwyniki}

In this chapter we compute the push-forward $\pi_{\ast}[\Q_n]$ of the $\KTh$-theory class of the universal bundle on $\Hilb{n, n+1}$.
We use notation
$$f(\lambda)=\big(\pi_{\ast}[\Q_n^m]\big)_{|\lambda} \in \ZZ[q^\pm,t^\pm]  \,.$$
The following holds.

\begin{pro} \label{pro:m1}
	For all $n \in \NN$ we have
	$$ \pi_*[\Q_{n}]=[\V_{n+1}] \in \KTh_{\TT}(\H_{n+1})\,. $$
\end{pro}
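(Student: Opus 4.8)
The plan is to prove the identity of $\KTh_\TT(\Hilb{n+1})$-classes by restricting to the torus fixed points. By Proposition~\ref{pro:localization1} the restriction map $\KTh_\TT(\Hilb{n+1})\mono\bigoplus_{\lambda\dashv n+1}\ZZ[q^\pm,t^\pm]$ is injective, so it suffices to check that $(\pi_*[\Q_n])_{|\lambda}=[\V_{n+1}]_{|\lambda}$ for every $\lambda\dashv n+1$. The right-hand side is immediate from the definition of $\V_{n+1}$: its fibre over $I_\lambda$ is $\CC[x,y]/I_\lambda$, whose $\TT$-character is the sum of the monomials attached to the boxes of $\Delta_\lambda$, so that $[\V_{n+1}]_{|\lambda}=\sum_{(i,j)\in\Delta_\lambda}q^it^j$.

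For the left-hand side I would invoke the localization formula for the pushforward. Since the fibre of $\Q_n$ at the fixed point $(\lambda,c)$ is $q^{k(c)}t^{l(c)}$, Corollary~\ref{cor:LRR2} with $m=1$ gives
\[(\pi_*[\Q_n])_{|\lambda}=\sum_{c\in C(\lambda)}q^{k(c)}t^{l(c)}\cdot r_{\lambda,c}=\sum_{c\in C(\lambda)}\tilde r_{\lambda,c}.\]
Cancelling from $r_{\lambda,c}$ the tangent weights common to $\Hilb{n+1}$ and $\Hilb{n,n+1}$ (by the tangent space description of Section~\ref{s:nested}, only the weights attached to boxes in the row or the column of $c$ are modified), one obtains the explicit product
\[r_{\lambda,c}=\prod_{\bullet\text{ below }c}\frac{1-q^{a}t^{-(b+1)}}{1-q^{a}t^{-b}}\cdot\prod_{\bullet\text{ left of }c}\frac{1-q^{-(a+1)}t^{b}}{1-q^{-a}t^{b}},\]
the products ranging over the boxes strictly below, respectively strictly to the left of, $c$ in $\Delta_\lambda$, with $a=a_\lambda(\bullet)$ and $b=b_\lambda(\bullet)$. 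Thus the proposition is equivalent to the combinatorial identity
\[\sum_{c\in C(\lambda)}\tilde r_{\lambda,c}=\sum_{(i,j)\in\Delta_\lambda}q^it^j,\]
which in particular asserts that the sum of rational functions on the left is an honest Laurent polynomial.

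Establishing this identity is the step I expect to be the main obstacle; everything before it is bookkeeping. The mechanism is already visible in small cases: for a single corner the product telescopes to $\sum q^it^j$ directly, while for an $L$-shaped $\lambda$ with two corners, putting the two terms over the common denominator $q-t$ collapses the sum to $1+q+t$. In general I would prove the identity by induction on the number of corners of $\lambda$, peeling off the outermost corner and recombining the remaining terms over a common denominator; the algebraic cancellations required are precisely the technical properties of the functions $\tilde r_{\lambda,c}$ collected in the appendix (of the same nature as Corollary~\ref{cor:Apinverse}, which is used in Proposition~\ref{pro:duality}). Alternatively, the full sum can be written as a sum of residues of a single rational function in an auxiliary variable, whose evaluation yields $\sum_{(i,j)\in\Delta_\lambda}q^it^j$. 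Either way the substance lies entirely in the boundary combinatorics of the diagram encoded by the $\tilde r_{\lambda,c}$.
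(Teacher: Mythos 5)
Your reduction is correct and coincides with the paper's: localize via Proposition \ref{pro:localization1}, identify $[\V_{n+1}]_{|\lambda}$ with $\sum_{(i,j)\in\Delta_\lambda}q^it^j$, and use Corollary \ref{cor:LRR2} to rewrite $(\pi_*[\Q_n])_{|\lambda}$ as $\sum_{c\in C(\lambda)}\tilde r_{\lambda,c}$, with the explicit product formula for $r_{\lambda,c}$ matching the one derived in the appendix. The gap is that the resulting combinatorial identity $\sum_{c\in C(\lambda)}\tilde r_{\lambda,c}=\sum_{(i,j)\in\Delta_\lambda}q^it^j$ --- which you yourself flag as the main obstacle --- is never proved. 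You offer two candidate strategies (induction on the number of corners with recombination over a common denominator, or a residue computation), verify only the one- and two-corner cases, and appeal to ``the technical properties of the functions $\tilde r_{\lambda,c}$ collected in the appendix'' as the needed cancellations. But the appendix lemmas are not of the kind a corner-peeling induction would use: they compute $\lim_{q\to 0}\tilde r_{\lambda,c}$, the existence of $\lim_{q\to\infty}(\tilde r_{\lambda,c}-q\tilde r_{\tilde\lambda,c'})$, and the inversion symmetry under $q\mapsto q^{-1},t\mapsto t^{-1}$; none of them performs the common-denominator collapse your induction would require. As written, the central step is a plan, not an argument.

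For comparison, the paper closes this gap by a different mechanism that avoids summing the rational functions directly. Since $\pi$ is proper, $f(\lambda):=(\pi_*[\Q_n])_{|\lambda}$ is a priori a Laurent polynomial in $q,t$, so it is determined by its coefficients. Lemma \ref{lem:limind3} shows $\lim_{q\to 0}f(\lambda)=\sum_{i=0}^{\lambda_1-1}t^i$, which forces all coefficients of negative powers of $q$ to vanish and identifies the $q^0$-part with the first column of $\Delta_\lambda$. Lemma \ref{lem:limind4} shows $f(\lambda)-qf(\tilde\lambda)$ has no positive powers of $q$, so the coefficients of $q^i$ with $i\ge 1$ agree with those of $qf(\tilde\lambda)$; induction on $|\lambda|$ (removing the first column, not a corner) then finishes the proof. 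If you want to salvage your approach, either adopt this limit-plus-polynomiality argument, or actually carry out one of your two sketches --- the residue formulation in particular is plausible but must be exhibited, not asserted.
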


Let us start by illustrating this theorem with an example.

\begin{ex} Let $n=2$, so that $\pi: \Hilb{2,3} \to \Hilb{3}$. We want to compute $\pi_{\ast}[\Q_2]$. By the Localization Theorem \ref{pro:localization1}, it is enough to understand what happens at each fixed point in $\Hilb{3}$. Let $\lambda = (2,1)$. There are two fixed points in the preimage of $\lambda$ under $\pi$, i.e.
\[\lambda_1 =
\ydiagram[*(gray)]{1}*[*(white)]{0,2}
\,, \qquad
\lambda_2  =
\ydiagram[*(white)]{1,1}*[*(gray)]{0,1+1}
\,.\]
The restriction of $\pi_{\ast}[\Q_2]$ to $\lambda$ is equal to the following sum of local contributions
\[\pi_{\ast}[\Q_2]_{|\lambda} = [\Q_2]_{|\lambda_1} \cdot \frac{\eu(T_{\lambda}\Hilb{3})}{\eu(T_{\lambda_1 }\Hilb{2,3})} + [\Q_2]_{|\lambda_2} \cdot \frac{\eu(T_{\lambda}\Hilb{3})}{\eu(T_{\lambda_2 }\Hilb{2,3})}.\]
The class of $\Q$ restricted to $\lambda_i$ is equal to $q^i t^j$, where $(i,j)$ is the marked box.

Using the descriptions of weights given in Section \ref{prel:hilb} and cancelling the factors which repeat in the numerator and the denominator, this sum reduces to 

\[\pi_{\ast}[\Q_2]_{|\lambda} = t \cdot \frac{1-\frac{q}{t^2}}{1-\frac{q}{t}} + q \cdot \frac{1-\frac{t}{q^2}}{1-\frac{t}{q}} = 1+q+t, \]
which is the sum of the monomials corresponding to the weights of the boxes of $\lambda$. Therefore it is equal to the class of $[\V_3]$  restricted to the fixed point $\lambda$.
\end{ex}

Let us go back to the general case. Since the only two bundles involved are $\Q_n$ and $\V_{n+1}$ we omit subscripts indicating the number of points. We split the proof into two lemmas.
\begin{lemma}\label{lem:limind3}
	Let $\lambda=(\lambda_1,...,\lambda_l)$ be a nonempty partition. Then
	$$\lim_{q\to 0}f(\lambda)=\sum_{i=0}^{\lambda_1-1} t^i \,. $$
\end{lemma}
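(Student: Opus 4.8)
The plan is to compute the limit $\lim_{q\to 0} f(\lambda)$ directly from the localization formula for the pushforward given in Corollary \ref{cor:LRR2}. Recall that $f(\lambda) = \pi_*[\Q_n]_{|\lambda} = \sum_{c\in C(\lambda)} q^{k(c)} t^{l(c)} \cdot r_{\lambda,c}$, where the sum runs over the corners $c=(k(c),l(c))$ of $\lambda$ and each $r_{\lambda,c}$ is a ratio of Euler classes, hence a rational function in $q,t$ whose numerator and denominator are products of factors $(1-q^{-i}t^{-j})$.

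First I would organize the corners of $\lambda$ by the column in which they sit. The bottom-left box of $\lambda$ is the corner $c_0=(0,0)$ precisely when $\lambda$ has a single column; in general the corner lying in the first column (column index $0$) has the form $(0, \lambda_1 - 1)$, and all other corners have strictly positive first coordinate $k(c) > 0$. The key observation is that the prefactor $q^{k(c)}$ kills the contribution of every corner with $k(c)>0$ in the limit $q\to 0$, provided the accompanying rational function $r_{\lambda,c}$ stays finite (or at worst has a controlled pole) as $q\to 0$. So the heart of the argument is to show that only the unique corner $c$ in the first column survives, and that its contribution limits to $\sum_{i=0}^{\lambda_1-1} t^i$.

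Concretely, I would analyze the $q\to 0$ behavior of each summand $q^{k(c)} t^{l(c)} r_{\lambda,c}$ using the explicit tangent-weight description from Section \ref{s:nested}. The factors appearing in $r_{\lambda,c}$ are of the form $(1-q^{-i}t^{-j})$; I would track the net power of $q$ contributed by the quotient of Euler classes at each corner, combining it with the explicit prefactor $q^{k(c)}$. For corners with $k(c)>0$ one expects the total $q$-valuation of the summand to be strictly positive, so those terms vanish in the limit. For the first-column corner, the combinatorics of the tangent weights should reduce, after cancellation, to a geometric-type product that telescopes to $1 + t + \cdots + t^{\lambda_1 - 1}$; the single worked example in the excerpt (where the $\lambda_1$-factor gives $t\cdot\frac{1-q/t^2}{1-q/t}\to t$ only partially, with the genuine first-column term supplying the rest) indicates the shape of the cancellation.

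The main obstacle will be the careful bookkeeping of $q$-valuations across all corners simultaneously: I must verify that no cancellation of leading $q$-powers in a denominator accidentally produces a pole that offsets the prefactor $q^{k(c)}$ for a corner with $k(c)>0$, which would spoil the vanishing. This requires the technical estimates on $r_{\lambda,c}$ promised in the appendix, so I would invoke those properties of the functions $r_{\lambda,c}$ and $\tilde{r}_{\lambda,c}$ to control both the order of vanishing and the limiting value. Once the valuation bookkeeping is pinned down, isolating the first-column corner and evaluating its limit is a finite geometric computation. I would either induct on the number of columns (peeling off the first column via the operation $\tilde\lambda$ from Definition \ref{df:operation}) or argue directly that the surviving corner is the one minimizing $k(c)$, namely the first-column corner with $l(c)=\lambda_1-1$.
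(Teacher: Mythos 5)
Your central claim --- that the prefactor $q^{k(c)}$ kills the contribution of every corner with $k(c)>0$, so that only a first-column corner survives --- is false, and the ``main obstacle'' you flag at the end is not a technicality to be checked but exactly the point where the strategy collapses. For a corner $c=(k,l)$ with $k>0$, the function $r_{\lambda,c}$ has a pole of order exactly $k$ at $q=0$: each of the $k$ factors $\frac{1-q^{-a_s-1}t^{b_s}}{1-q^{-a_s}t^{b_s}}$ coming from the boxes to the left of the corner behaves like $q^{-1}$ as $q\to 0$ (since $a_s\ge 1$ there). Hence $q^{k}t^{l}r_{\lambda,c}=\tilde r_{\lambda,c}$ has a finite \emph{nonzero} limit, and no corner drops out. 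You can see this in the paper's own worked example $\lambda=(2,1)$: the corner $(1,0)$ contributes $\lim_{q\to 0} q\cdot\frac{1-t/q^2}{1-t/q}=1$, while the first-column corner $(0,1)$ contributes only $t$; the answer $1+t$ is assembled from \emph{both}. Worse, your argument has nothing to latch onto for a partition such as $\lambda=(2,2)$, which has a single corner $(1,1)$ and no corner in the first column at all; your scheme would predict limit $0$ where the lemma asserts $1+t$.

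The paper's actual proof runs in the opposite direction. Order the corners $c_1=(k_1,l_1),\dots,c_N=(k_N,l_N)$ from uppermost to lowest, so $l_1=\lambda_1-1>l_2>\dots>l_N\ge 0$, and set $l_{N+1}=-1$. Corollary \ref{cor:LRR2} with $m=1$ gives $f(\lambda)=\sum_i \tilde r_{\lambda,c_i}$, and the appendix computation (Proposition \ref{pro:Ap1} / Corollary \ref{cor:Aplim1}) shows that each corner contributes the block of consecutive powers
\begin{equation*}
\lim_{q\to 0}\tilde r_{\lambda,c_i}=t^{\,l_{i+1}+1}+t^{\,l_{i+1}+2}+\dots+t^{\,l_i}\,,
\end{equation*}
the factors from boxes to the left of the corner tending to $1$ after the rescaling and the factors from boxes below the corner producing a telescoping geometric sum. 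These blocks partition $\{0,1,\dots,\lambda_1-1\}$, so summing over the corners yields $\sum_{i=0}^{\lambda_1-1}t^i$. If you want to salvage your write-up, the valuation bookkeeping you propose must be replaced by this exact computation of each corner's (nonzero) limit; the vanishing you hoped for simply does not occur.
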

\begin{proof}
	Denote the corners of $\lambda$ by
	$$c_1 = (k_1,l_1), c_2 = (k_2, l_2), \dots, c_{N} = (k_N, l_N)\,,$$
	ordering them from upper-left to lower-right, so that $l_1, l_2, \dots, l_N$ form a descending sequence.
	Set $l_{N+1}=-1$. Corollaries \ref{cor:LRR2} and \ref{cor:Aplim1} imply that
	\[ \lim_{q\to 0}f(\lambda)=
	\sum_{i=1}^N \lim_{q\to 0} \tilde{r}_{\lambda,c_i}
	=\sum_{i=1}^N  \sum^{l_{i}}_{l_{i-1}+1} t^i=\sum_{i=0}^{l_1} t^i\,. \qedhere \]
\end{proof}

\begin{lemma}\label{lem:limind4}
	Let $\lambda$ be a nonempty partition and $\tilde{\lambda}$ be the partition $\lambda$ without the first column (cf. Definition \ref{df:operation}).  Then the limit
	$$\lim_{q \to \infty} \big(f(\lambda)-qf(\tilde{\lambda}) \big)$$
	exists, i.e., the difference $f(\lambda)-qf(\tilde{\lambda})$ does not contain positive powers of $q$.
\end{lemma}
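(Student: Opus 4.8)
The plan is to expand $f(\lambda)$ through the Lefschetz--Riemann--Roch localization formula and compare it termwise with $q\,f(\tilde\lambda)$. By Corollary \ref{cor:LRR2} (in the case $m=1$, where the prefactor $(q^{k}t^{l})^{m-1}$ is trivial) we have $f(\lambda)=\sum_{c\in C(\lambda)}\tilde{r}_{\lambda,c}$ and likewise $f(\tilde\lambda)=\sum_{c'\in C(\tilde\lambda)}\tilde{r}_{\tilde\lambda,c'}$. The idea is to match the corners of $\lambda$ with those of $\tilde\lambda$ and show that, after the matching, every individual contribution to $f(\lambda)-q\,f(\tilde\lambda)$ stays bounded as $q\to\infty$. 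Since the whole expression is an honest element of $\ZZ[q^{\pm},t^{\pm}]$, boundedness as $q\to\infty$ is exactly the absence of positive powers of $q$.

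First I would set up the combinatorial bijection coming from the removal of the first column. Deleting the first column sends each corner $(k,l)$ of $\lambda$ with $k\ge 1$ to the corner $(k-1,l)$ of $\tilde\lambda$, and this is a bijection onto $C(\tilde\lambda)$ (the arm/leg conditions defining a corner are unchanged by the column shift). The only corner of $\lambda$ without a partner is the column-zero corner $c_0=(0,\lambda_1-1)$, which is present precisely when $\lambda_1>\lambda_2$. Hence
\[
f(\lambda)-q\,f(\tilde\lambda)=\tilde{r}_{\lambda,c_0}+\sum_{(k,l)\in C(\lambda),\,k\ge 1}\big(\tilde{r}_{\lambda,(k,l)}-q\,\tilde{r}_{\tilde\lambda,(k-1,l)}\big),
\]
and it suffices to bound each summand separately.

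Next, using the tangent-weight description of Section \ref{s:nested}, I would write $\tilde{r}_{\lambda,c}=q^{k}t^{l}\cdot(\mathrm{below})\cdot(\mathrm{left})$, where the two products run over the boxes strictly below $c$ in its column and strictly to the left of $c$ in its row, with factors of the form $(1-q^{a}t^{-b-1})/(1-q^{a}t^{-b})$ and $(1-q^{-a-1}t^{b})/(1-q^{-a}t^{b})$ respectively. Two observations then finish the case $k\ge 1$: the arm lengths of the boxes below $c$ in $\lambda$ coincide with those below $c'$ in $\tilde\lambda$, so the two $(\mathrm{below})$ products are literally equal; and the $(\mathrm{left})$ product of $\lambda$ differs from that of $\tilde\lambda$ by exactly the single factor $F=(1-q^{-k-1}t^{\lambda_1-1-l})/(1-q^{-k}t^{\lambda_1-1-l})$ contributed by the removed first column. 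A direct simplification gives
\[
q^{k}(F-1)=\frac{t^{\lambda_1-1-l}(1-q^{-1})}{1-q^{-k}t^{\lambda_1-1-l}},
\]
which tends to a finite limit as $q\to\infty$; combined with the fact that the surviving $(\mathrm{below})$ and $(\mathrm{left})$ factors are also bounded at $q\to\infty$, this shows $\tilde{r}_{\lambda,(k,l)}-q\,\tilde{r}_{\tilde\lambda,(k-1,l)}=q^{k}(F-1)\cdot t^{l}\,(\mathrm{below})\,(\mathrm{left}_{\tilde\lambda})$ is bounded. The remaining term $\tilde{r}_{\lambda,c_0}$ has $k=0$, so it carries no $q^{k}$ prefactor and its factors involve only nonpositive powers of $q$; it is therefore manifestly bounded.

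The main obstacle is the bookkeeping of tangent weights under removal of the first column: one must verify precisely which Euler-class factors in $\tilde{r}_{\lambda,c}$ and $\tilde{r}_{\tilde\lambda,c'}$ cancel and that exactly one boundary factor $F$ survives. I expect these identities, together with the boundedness of the individual factors, to be exactly the kind of technical statements about $r_{\lambda,c}$ and $\tilde{r}_{\lambda,c}$ collected in the appendix, so that the body of the argument reduces to the clean termwise boundedness computation sketched above.
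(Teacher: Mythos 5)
Your proposal is correct and follows essentially the same route as the paper: the paper's proof also splits $f(\lambda)-qf(\tilde\lambda)$ corner-by-corner via the bijection $C(\lambda)\cap\{k\ge 1\}\to C(\tilde\lambda)$, $(k,l)\mapsto(k-1,l)$, treats the possible unmatched corner in column $0$ separately, and reduces each matched difference to the identity $\tilde r_{\lambda,(k,l)}=\tilde r_{\tilde\lambda,(k-1,l)}\cdot W_{k,b}$ with the single surviving first-column factor (Propositions \ref{pro:Ap2} and \ref{pro:Ap3} in the appendix). The only cosmetic difference is that the paper bounds the matched term by rewriting $W_{k,b}-q$ and invoking the inversion symmetry $R_{\lambda,c}(q,t)=q^kt^lR_{\lambda,c}(q^{-1},t^{-1})$ to reduce to the already-computed $q\to 0$ limit, whereas you bound each factor at $q\to\infty$ directly; both computations are valid.
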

\begin{proof}
	Denote the corners of $\lambda$ by
	$$c_1 = (k_1,l_1), \dots, c_{N} = (k_N, l_N)\,,$$
	ordering them from upper-left to lower-right, so that $l_1, l_2, \dots, l_N$ form a descending sequence. Suppose that $k_1\neq 0$. Then the corners of $\tilde{\lambda}$ are of the form
	$$c'_1=(k_1-1,l_1),\dots,c'_N=(k_N-1,l_N)\,.$$
	By Corollary \ref{cor:LRR2}
	we have
	$$\lim_{q \to \infty}\big(f(\lambda)-qf(\tilde{\lambda})\big)=
	\sum_{i=1}^N\lim_{q \to \infty} \left(\tilde{r}_{\lambda,c_i}-q\tilde{r}_{\tilde{\lambda},c'_i}\right)\,.
	$$
	This limit exists due to Corollary \ref{cor:Aplim2}.
	
	Suppose now that $k_1=0$.  Then the corners of $\tilde{\lambda}$ are of the form $$c'_2=(k_2-1,l_2),\dots,c'_N=(k_N-1,l_N)\,.$$
	By Corollary \ref{cor:LRR2} we have
	$$\lim_{q \to \infty}f(\lambda)-qf(\tilde{\lambda})=
	\lim_{q \to \infty} \tilde{r}_{\lambda,c_1}+
	\sum_{i=2}^N\lim_{q \to \infty}\left(\tilde{r}_{\lambda,c_i}-q\tilde{r}_{\tilde{\lambda},c'_i}\right)\,.
	$$
	This limit exists due to Corollaries \ref{cor:Aplim2} and \ref{cor:Aplim3}.
\end{proof}

\begin{proof}[Proof of Proposition \ref{pro:m1}]
 Thanks to the Localization Theorem (Proposition \ref{pro:localization1}) we only need to prove that for every fixed point $\lambda$ in $\Hilb{n+1}^\TT$ we have $f(\lambda) = \V(\lambda)$. This is equivalent to the following equality
\[ f(\lambda)
= \sum_{(i,j) \in \Delta_\lambda} q^{i} t^{j} \in \ZZ[q^\pm,t^\pm]\,.\]
The element $f(\lambda)$ is a Laurent polynomial, therefore it can be written as
$$f(\lambda)=a^\lambda_{i,j}\cdot q^it^j \in \ZZ[q^\pm,t^\pm]\,$$
 for some $a^\lambda_{i,j}\in \ZZ$. We need to prove that 
\begin{align} \label{eq:f1}
	a^\lambda_{i,j}= 
	\begin{cases}
		1 \text{ if } (i,j) \in \Delta_\lambda\,, \\
		0\text{ otherwise \,.}
	\end{cases}
\end{align} 
We use induction on the sum of $\lambda$. For the empty partition the claim is obvious. 

Let us focus on the inductive step. Let $\tilde{\lambda}$ be the partition $\lambda$ without the first column, cf. Definition \ref{df:operation}. By the inductive assumption the thesis holds for $\tilde{\lambda}$. Lemma \ref{lem:limind3} implies that when $i\le 0$, Equation \eqref{eq:f1} holds.
For $i\ge 1$, Lemma \ref{lem:limind4} implies that
$$
	a^\lambda_{i,j}=a^{\tilde{\lambda}}_{i-1,j}\,.
$$
Therefore, for $i\ge 1$ Equation \eqref{eq:f1} follows from the inductive assumption.  
\end{proof}

Propositions \ref{pro:m1} and \ref{pro:duality} imply the following fact.
\begin{cor} \label{cor:m0}
	For all $n \in \NN$ we have
	$$ \pi_*[\O_{\Hilb{n,n+1}}]=[\V^*_{n+1}] \in \KTh_{\TT}(\H_{n+1})\,. $$
\end{cor}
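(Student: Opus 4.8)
The plan is to recognize that this corollary is a direct consequence of the two cited results, with essentially no new computation required. The key observation is that the structure sheaf $\O_{\Hilb{n,n+1}}$ is the trivial line bundle, and hence equals the zeroth tensor power of $\Q_n$; as a class in $\KTh_\TT(\Hilb{n,n+1})$ we may therefore write $[\O_{\Hilb{n,n+1}}]=[\Q_n^0]$. This rewriting is precisely what is needed to bring Proposition \ref{pro:duality} into play.

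First I would apply Proposition \ref{pro:duality} with $m=0$. Since that proposition asserts $\pi_*[\Q_n^m]=(\pi_*[\Q_n^{1-m}])^*$ for every integer $m$, specializing to $m=0$ yields
\[
\pi_*[\O_{\Hilb{n,n+1}}]=\pi_*[\Q_n^0]=\big(\pi_*[\Q_n^{1}]\big)^*=\big(\pi_*[\Q_n]\big)^*\,.
\]
Next I would substitute the value of $\pi_*[\Q_n]$ supplied by Proposition \ref{pro:m1}, namely $\pi_*[\Q_n]=[\V_{n+1}]$. Taking $\KTh$-theoretic duals and using that the involution $(-)^*$ on $\KTh_\TT(\Hilb{n+1})$ sends the class of a bundle to the class of its dual, i.e. $[\V_{n+1}]^*=[\V_{n+1}^*]$, gives the claimed equality $\pi_*[\O_{\Hilb{n,n+1}}]=[\V_{n+1}^*]$.

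Since the argument is a formal substitution, there is no genuine obstacle to overcome; the only points deserving a moment's care are bookkeeping rather than mathematics. One should confirm that Proposition \ref{pro:duality} is indeed stated for all integers $m$ (so that the boundary case $m=0$ is covered), and that the dualizing involution used there is the same one that intertwines $[\V_{n+1}]$ with $[\V_{n+1}^*]$. Both hold by construction, so the chain of equalities above completes the proof.
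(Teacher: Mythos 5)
Your proof is correct and is exactly the argument the paper intends: the corollary is stated there as an immediate consequence of Propositions \ref{pro:m1} and \ref{pro:duality}, and you have simply spelled out the specialization to $m=0$ and the compatibility of the duality involution with $[\V_{n+1}]^*=[\V_{n+1}^*]$.
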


\section{Adams-Riemann-Roch} \label{s:Adams}
Let $\Tt \subset \TT$ be a one-dimensional subtorus acting only on the second variable, cf. Section \ref{s:subtorus}.
In this section, we generalize Proposition \ref{pro:m1} in the $\Tt$-equivariant $\KTh$-theory using Adams-Riemann-Roch theorem.
\begin{atw} \label{tw:push}
 For an integer $m$ we have 
	$$\pi_*[\Q^{m}]=
	\frac{1-t^{-m}}{1-t^{-1}}\cdot \PKT{m}{}
	-\frac{1-t^{-(m-1)}}{1-t^{-1}}\cdot\frac{1}{t}\cdot\PKT{m-1}{}\in \KTh_{\Tt}(\Hilb{n+1})\,,$$
 where $\PKT{i}{}$  denotes the image of the power-sum polynomial under the Kirwan map, see Definition \ref{df:Powersum}.
\end{atw}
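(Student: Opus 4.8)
The plan is to deduce the $\Tt$-equivariant formula for $\pi_*[\Q^m]$ from the already-established nonequivariant case $m=1$ (Proposition \ref{pro:m1}) by applying the Adams--Riemann--Roch theorem (Theorem \ref{tw:ARR}). The key observation is that $\Q$ is a line bundle, so its Adams operator is simply a power: $\psi^m(\Q)=\Q^m$. Thus $\Q^m$ is exactly the class whose pushforward we want, and ARR relates $\pi_*(\psi^m(\Q)/\theta^m(T_\pi^*))$ to $\psi^m(\pi_*\Q)$. The strategy is therefore to run ARR in reverse: compute $\psi^m(\pi_*[\Q])$ using Proposition \ref{pro:m1} and Proposition \ref{pro:AdamsKirwan}, then correct by the Bott cannibalistic class of the virtual tangent bundle $T_\pi$ to recover $\pi_*[\Q^m]$.

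First I would identify the virtual tangent bundle $T_\pi = T\Hilb{n,n+1} - \pi^*T\Hilb{n+1}$ and compute its $\Tt$-equivariant class, or rather the relevant part of $\theta^m(T_\pi^*)$. Since $p$ and $\pi$ fit into the diagram of Section \ref{s:nested}, and $\Hilb{n,n+1}$ has dimension $2n+2$ while $\Hilb{n+1}$ has dimension $2(n+1)$, the virtual bundle $T_\pi$ has rank $0$; its nontrivial $\Tt$-weights should be expressible in terms of $\Q$, the pullbacks of the tautological bundles, and the coordinate character $t$. The point is that, working $\Tt$-equivariantly (i.e. only the second torus coordinate $t$ acts), the first coordinate $q$ is set to the trivial character, so many weights collapse and the cannibalistic class $\theta^m(T_\pi^*)$ becomes a manageable rational expression in $t$ and $\Q$. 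I would compute it either directly from the tangent-weight description at the fixed points given in Section \ref{s:nested}, or by using the exact sequence defining $\Q$ together with naturality of $\theta^m$.

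Next, applying Theorem \ref{tw:ARR} to $\alpha = \Q$ gives
\[
\pi_*\!\left(\frac{\psi^m(\Q)}{\theta^m(T_\pi^*)}\right)=\psi^m(\pi_*\Q)=\psi^m([\V_{n+1}]),
\]
where the last equality uses Proposition \ref{pro:m1}, and by Proposition \ref{pro:AdamsKirwan} we have $\psi^m([\V_{n+1}])=\kappa^{\KTh}(p_m)=\PKT{m}{}$. Since $\psi^m(\Q)=\Q^m$, rearranging yields $\pi_*[\Q^m]$ once the factor $\theta^m(T_\pi^*)$ is pulled through. The subtlety is that $\theta^m(T_\pi^*)$ is not constant, so I cannot simply divide; instead I would either absorb it by working fixed-point-wise via Corollary \ref{cor:LRR2}, or express $\Q^m/\theta^m(T_\pi^*)$ as a combination of powers of $\Q$ whose pushforwards are known by induction. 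Combined with the $\Tt$-equivariant restriction of $\Q$ at fixed points (which is $t^{l(c)}$, as $q\mapsto 1$), the resulting sum should telescope into the two power-sum terms $\frac{1-t^{-m}}{1-t^{-1}}\PKT{m}{}$ and $\frac{1-t^{-(m-1)}}{1-t^{-1}}\cdot\frac1t\cdot\PKT{m-1}{}$.

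The main obstacle will be the bookkeeping of the cannibalistic class: computing $\theta^m(T_\pi^*)$ explicitly and checking that the ARR identity rearranges into a genuine (non-localized) class in $\KTh_{\Tt}(\Hilb{n+1})$, since ARR a priori lives in localized $\KTh$-theory with rational coefficients. To make the final formula land in the integral, non-localized ring, I expect to verify the identity after restriction to every $\TT$-fixed point using Corollary \ref{cor:localization2} and Corollary \ref{cor:LRR2}, reducing everything to the explicit rational functions $\tilde r_{\lambda,c}$ in $t$ (with $q=1$) and confirming that the two candidate expressions agree termwise. The geometric-functional correction encoded in $\theta^m$ is precisely what produces the second, shifted power-sum term, so getting its sign and the $1/t$ twist right will be the delicate part.
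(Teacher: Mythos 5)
Your plan follows the paper's proof almost exactly: the paper also proves the $m\ge 2$ case by applying Adams--Riemann--Roch to $\alpha=\Q$, using $\psi^m(\Q)=\Q^m$, $\pi_*[\Q]=[\V_{n+1}]$ and $\psi^m(\PKT{1}{})=\PKT{m}{}$, and it computes $\theta^m(T_\pi^*)$ from the fixed-point tangent weights precisely as you suggest (in $\Tt$-equivariant K-theory one finds $T_\pi=\CC_t-t\Q$ as a virtual bundle, so by multiplicativity $\theta^m(T_\pi^*)=\theta^m(t^{-1})/\theta^m((t\Q)^{-1})$, which is where restricting to the one-dimensional torus is essential). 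The one step you leave genuinely open --- how to ``pull the factor $\theta^m(T_\pi^*)$ through'' --- is resolved in the paper not by induction on powers of $\Q$ nor by a fixed-point computation, but by the elementary identity for line bundles
\begin{equation*}
\L^m=\L^m\cdot\theta^m\bigl((t\L)^{-1}\bigr)-t^{-1}\,\L^{m-1}\cdot\theta^{m-1}\bigl((t\L)^{-1}\bigr)\,,
\end{equation*}
applied to $\L=\Q$: this rewrites $\Q^m$ as $\theta^m(t^{-1})\cdot\frac{\psi^m(\Q)}{\theta^m(T_\pi^*)}-\frac{\theta^{m-1}(t^{-1})}{t}\cdot\frac{\psi^{m-1}(\Q)}{\theta^{m-1}(T_\pi^*)}$, where the coefficients $\theta^m(t^{-1})=\frac{1-t^{-m}}{1-t^{-1}}$ are scalars from $\KTh_{\Tt}(pt)$ and hence pass through $\pi_*$; ARR then applies to each term and immediately yields both power-sum terms of the statement, with the second shifted term coming from the second summand of the identity rather than from any delicate sign bookkeeping. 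Two smaller points: Proposition \ref{pro:m1} is already $\TT$-equivariant (not merely nonequivariant), and your plan does not address $m\le 0$, which the paper handles separately via the duality $\pi_*[\Q^m]=(\pi_*[\Q^{1-m}])^*$ of Proposition \ref{pro:duality} since Adams operators are only available for positive $m$.
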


As a consequence, we obtain the pushforward formula in the non-equivariant $\KTh$-theory.
\begin{cor}\label{cor:pushnoneq}
 For an arbitrary integer $m$ we have
	$$\pi_*[\Q^m]=m\cdot\PKT{m}{} -(m-1)\cdot\PKT{m-1}{}\in \KTh(\Hilb{n+1})\,.$$
\end{cor}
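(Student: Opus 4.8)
The plan is to derive the non-equivariant identity directly from the $\Tt$-equivariant one in Theorem~\ref{tw:push} by applying the forgetful map
$$\phi\colon \KTh_{\Tt}(\Hilb{n+1}) \to \KTh(\Hilb{n+1})\,,$$
induced by restriction to the trivial subgroup of $\Tt$. On the coefficient ring $\KTh_{\Tt}(pt)=\ZZ[t^{\pm 1}]$ this map is the augmentation $t\mapsto 1$.

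First I would record that the apparently rational coefficients in Theorem~\ref{tw:push} are honest Laurent polynomials in $t$. Expanding the finite geometric series gives
$$\frac{1-t^{-m}}{1-t^{-1}}=1+t^{-1}+\dots+t^{-(m-1)}\,, \qquad \frac{1-t^{-(m-1)}}{1-t^{-1}}\cdot\frac{1}{t}=t^{-1}+t^{-2}+\dots+t^{-(m-1)}\,,$$
so both lie in $\ZZ[t^{\pm 1}]$, and the formula of Theorem~\ref{tw:push} is a genuine equality in the free $\ZZ[t^{\pm 1}]$-module $\KTh_{\Tt}(\Hilb{n+1})$. For arbitrary $m\in\ZZ$ the same expressions remain Laurent polynomials and stay regular at $t=1$; indeed a single application of L'H\^opital gives the value $\tfrac{1-t^{-m}}{1-t^{-1}}\big|_{t=1}=m$, and likewise $m-1$ for the second coefficient, so the evaluation below is uniform in $m$.

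Next I would invoke two standard compatibilities of $\phi$. Since $\pi$ is proper, $\phi$ commutes with $\pi_{\ast}$, so that $\phi\big(\pi_{\ast}[\Q^m]\big)=\pi_{\ast}[\Q^m]$ is exactly the non-equivariant class we want to compute. Moreover $\phi$ is compatible with the Kirwan map: it sends the equivariant classes $\PKT{m}{},\PKT{m-1}{}$ to their non-equivariant counterparts, because the $\KTh$-theoretic Chern roots of $\V_{n+1}$ restrict to those of the underlying non-equivariant bundle. Applying $\phi$ to the identity of Theorem~\ref{tw:push} therefore amounts to substituting $t=1$ into the two Laurent polynomials above, which yields $m$ and $m-1$ respectively. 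This produces
$$\pi_{\ast}[\Q^m]=m\cdot\PKT{m}{}-(m-1)\cdot\PKT{m-1}{}\in\KTh(\Hilb{n+1})\,,$$
as claimed.

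The only point requiring more than a routine check is the commutativity of the forgetful map with the proper pushforward $\pi_{\ast}$ (equivalently, flat base change of $\pi_{\ast}$ along $t\mapsto 1$, which is legitimate because $\KTh_{\Tt}(\Hilb{n+1})$ is $\ZZ[t^{\pm 1}]$-free); this is standard, and once granted the corollary reduces to evaluating the two explicit Laurent polynomials at $t=1$.
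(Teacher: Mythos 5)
Your proof is correct and is exactly the argument the paper intends: the corollary is stated as an immediate consequence of Theorem \ref{tw:push}, obtained by applying the forgetful map to non-equivariant $\KTh$-theory, i.e.\ evaluating the Laurent-polynomial coefficients $\frac{1-t^{-m}}{1-t^{-1}}$ and $\frac{1-t^{-(m-1)}}{1-t^{-1}}\cdot\frac{1}{t}$ at $t=1$ to get $m$ and $m-1$. Your additional checks (regularity at $t=1$ for all integers $m$, compatibility of the forgetful map with $\pi_{\ast}$ and with the Kirwan map) are the right points to verify and are all standard.
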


The rest of this section is devoted to the proof of Theorem \ref{tw:push}.
	First, we prove two lemmas.
\begin{lemma}\label{lem:Adams1}
	Let $cl^*$ be an arbitrary multiplicative class of $\Tt$-vector bundles. We have
	\begin{align} \label{w:Tpi}
		cl^*(T^*_\pi)=\frac{cl^*(t^{-1})}{cl^*((t\Q)^{-1})} \in K^\Tt(\Hilb{n,n+1}),
	\end{align}
	where $T^*_\pi$ is the relative cotangent bundle to the projection $\pi: \Hilb{n,n+1} \to \Hilb{n+1}$.
\end{lemma}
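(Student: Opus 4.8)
The plan is to reduce the lemma to the single $\KTh$-theoretic identity
\[
T_\pi = t - t\Q \in \KTh_\Tt(\Hilb{n,n+1})\,,
\]
where $T_\pi = T\Hilb{n,n+1} - \pi^*T\Hilb{n+1}$ is the virtual tangent bundle of $\pi$. Granting this, the lemma is formal: dualizing gives $T_\pi^* = t^{-1} - (t\Q)^{-1}$, and since a multiplicative class is a homomorphism from $(\KTh_\Tt(\Hilb{n,n+1}),+)$ into the units that carries differences to quotients, applying $cl^*$ yields $cl^*(T_\pi^*) = cl^*(t^{-1})/cl^*((t\Q)^{-1})$, which is exactly the claim.

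To prove the displayed identity I would argue at $\TT$-fixed points. By Corollary \ref{cor:localization2} the restriction to the $\TT$-fixed locus is injective on $\KTh_\Tt(\Hilb{n,n+1})$, so it suffices to verify equality after restricting to each fixed point $(\lambda,c)$ with $\lambda\dashv n+1$ and $c=(k,l)\in C(\lambda)$. The right-hand side is immediate: $\Q_{|(\lambda,c)} = q^kt^l$ becomes $t^l$ once we pass to the one-dimensional torus $\Tt$ (that is, set $q=1$), so $(t - t\Q)_{|(\lambda,c)} = t - t^{l+1}$.

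The core computation is $(T_\pi)_{|(\lambda,c)} = T_{(\lambda,c)}\Hilb{n,n+1} - T_\lambda\Hilb{n+1}$ viewed as a $\Tt$-representation, for which I would invoke the explicit weight comparison of Section \ref{s:nested}. There the tangent weights of $\Hilb{n,n+1}$ at $(\lambda,c)$ are obtained from those of $\Hilb{n+1}$ at $\lambda$ by modifying only the boxes directly below $c$ in its column and directly to the left of $c$ in its row, all other weights being kept intact; hence every unmodified weight cancels in the virtual difference. A box to the left of $c$ has its weight changed from $(a+1,-b)$ to $(a,-b)$, and these differ only in the $q$-exponent, so after setting $q=1$ each such box contributes $t^{-b}-t^{-b}=0$. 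The surviving contributions come from the $l$ boxes in rows $0,\dots,l-1$ of column $k$, whose weight changes from $(-a,b+1)$ to $(-a,b)$; the box in row $j$ has leg length $b=l-j$, so after setting $q=1$ the total is the telescoping sum
\[
\sum_{j=0}^{l-1}\big(t^{\,l-j}-t^{\,l-j+1}\big)=\sum_{m=1}^{l}\big(t^{m}-t^{m+1}\big)=t-t^{l+1}\,.
\]
Thus $(T_\pi)_{|(\lambda,c)} = t - t^{l+1} = (t-t\Q)_{|(\lambda,c)}$ at every fixed point, and injectivity of the restriction map establishes $T_\pi = t - t\Q$, hence the lemma.

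The step I expect to demand the most care is the weight bookkeeping of the previous paragraph: correctly identifying which arrows are shortened in the passage from $\Hilb{n+1}$ to $\Hilb{n,n+1}$ and reading off the leg lengths $l-j$ that make the sum telescope. Conceptually, the vanishing of the boxes to the left of $c$ after restriction to $\Tt$ is precisely why the answer collapses to a formula involving only $t$ and $\Q$, and is the reason the statement is clean over the one-dimensional torus but would fail over the full torus $\TT$.
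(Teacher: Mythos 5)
Your proof is correct and follows essentially the same route as the paper: reduce to $\TT$-fixed points via Corollary \ref{cor:localization2}, compute $T_\pi$ there as the virtual $\Tt$-representation $t-t^{l+1}=t-t\Q_{|(\lambda,c)}$ using the arrow-shortening description of Section \ref{s:nested} (the boxes to the left of the corner dying upon setting $q=1$, the boxes below telescoping), and then apply $cl^*$ to $T_\pi^*=t^{-1}-(t\Q)^{-1}$. The only cosmetic difference is that the paper applies $cl^*$ fixed point by fixed point rather than first asserting the global identity $T_\pi=t-t\Q$, and its proof writes the corner's column index $k$ where, under the paper's own $(i,j)\leftrightarrow x^iy^j$ convention, the row index $l$ (as in your version) is the correct exponent.
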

\begin{proof}
	It is enough to prove the formula after restriction to the fixed point set $\Hilb{n,n+1}^\TT$ (see Corrolary \ref{cor:localization2}).
	Let $\lambda$ be a partition and $(k,l)$ its corner. Let $cl^*$ be an arbitrary characteristic class. The description of tangent weights (see Section \ref{s:nested}) 
	implies that
	$$ (T_{\pi})_{|\lambda,(k,l)}= \CC_{t}+\CC_{2t}+\dots+ \CC_{kt}-\CC_{2t}-\CC_{3t}-\dots- \CC_{(k+1)t} =\CC_{t}-\CC_{(k+1)t} $$
	as a virtual $\Tt$--representation. Therefore, the restriction of the left hand side of equation \eqref{w:Tpi} is equal to
	$$cl^*(T^*_\pi)_{|\lambda,(k,l)}=\frac{cl^*(t^{-1})}{cl^*(t^{-k-1})}$$
	The restriction of the line bundle $\Q$ to the fixed point $\lambda,(k,l)$ is equal to $kt$, thus the  restrictions of both sides of Equation \eqref{w:Tpi} are equal.
\end{proof}
\begin{lemma}\label{lem:Adams2}
	Let $\L$ be an arbitrary $\Tt$-equivariant line bundle. For an integer $m\ge 2$ we have
	$$
	\psi^m(\L)=\psi^m(\L)\cdot\theta^m\left((t\L)^{-1}\right) - t^{-1}\cdot\psi^{m-1}(\L)\cdot\theta^{m-1}\left((t\L)^{-1}\right)\,.
	$$
\end{lemma}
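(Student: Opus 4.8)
The plan is to reduce the identity to a direct algebraic computation using the explicit values of the Adams operators and the Bott class on line bundles. Since $\psi^m$ is a ring homomorphism with $\psi^m(\L)=\L^m$, and $\theta^m$ is a multiplicative class with $\theta^m(\L)=\frac{1-\L^m}{1-\L}$, every term on the right-hand side becomes an explicit expression in the class $\L$ and the character $t$. In particular, writing $(t\L)^{-1}=t^{-1}\L^{-1}$, I would use the fact that for $m\ge 1$ the Bott class of a line bundle is a genuine (non-localized) $\KTh$-theory class, namely the finite geometric sum
$$\theta^m\big((t\L)^{-1}\big)=\sum_{i=0}^{m-1}(t\L)^{-i}\,,\qquad \theta^{m-1}\big((t\L)^{-1}\big)=\sum_{i=0}^{m-2}(t\L)^{-i}\,.$$

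With these substitutions the right-hand side becomes
$$\L^m\sum_{i=0}^{m-1}(t\L)^{-i}-t^{-1}\L^{m-1}\sum_{i=0}^{m-2}(t\L)^{-i}\,.$$
First I would distribute the powers of $\L$ into the sums, using $\L^m(t\L)^{-i}=t^{-i}\L^{m-i}$ and $t^{-1}\L^{m-1}(t\L)^{-i}=t^{-(i+1)}\L^{m-(i+1)}$. After reindexing the second sum by $j=i+1$, both sums consist of the terms $t^{-j}\L^{m-j}$, the first ranging over $j=0,\dots,m-1$ and the second over $j=1,\dots,m-1$. These telescope: all terms with $j\ge 1$ cancel, leaving only the $j=0$ term $\L^m=\psi^m(\L)$, which is exactly the left-hand side.

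There is no genuine obstacle here; the content of the lemma is the clean telescoping cancellation, and the only point requiring care is the bookkeeping of the exponents together with the index shift in the second sum. Alternatively, one could combine the two summands over the common denominator $1-(t\L)^{-1}$, in which case the numerator collapses to $\L^{m-1}(\L-t^{-1})$ while the denominator factors as $\L^{-1}(\L-t^{-1})$, so that the quotient is again $\L^m$; this phrasing makes transparent that the identity is an equality of honest $\KTh$-theory classes and does not require passing to the localized ring.
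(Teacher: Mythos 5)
Your proof is correct and takes essentially the same route as the paper, which simply states that the identity follows by direct computation from $\psi^m(\L)=\L^m$ and $\theta^m\left((t\L)^{-1}\right)=\frac{1-(t\L)^{-m}}{1-(t\L)^{-1}}$; your telescoping and common-denominator versions are just two explicit ways of carrying out that same computation.
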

\begin{proof}
	It is direct computation applying
	$$\psi^m(\L)=\L^m\,, \qquad \theta^m\left((t\L)^{-1}\right)=\frac{1-(t\L)^{-m}}{1-(t\L)^{-1}}\,.$$
\end{proof}
\begin{proof}[Proof of Theorem \ref{tw:push}]
	Due to Proposition \ref{pro:duality}
	it is enough to prove the theorem for $m \ge 1$. For $m=1$ the theorem follows from Proposition \ref{pro:m1}, so we may assume that $m \ge 2.$ \\
	We have $\Q^m=\psi^m(\Q)$, because $\Q$ is a line bundle. Lemma \ref{lem:Adams2}
	for $\L=\Q$ implies that
	$$
	\Q^m=\psi^m(\Q)\cdot\theta^m\left((t\Q)^{-1}\right) -
	\frac{1}{t}\cdot\psi^{m-1}(\Q)\cdot \theta^{m-1}\left((t\Q)^{-1}\right)\,.
	$$
	We rewrite this formula  using Lemma \ref{lem:Adams1} for $cl^*$ equal to the Bott classes $\theta^m$ and $\theta^{m-1}$ to obtain 
	$$
	\Q^m=
	\theta^m(t^{-1})\cdot\frac{\psi^m(\Q)}{\theta^m(T^*_\pi)}-\frac{\theta^{m-1}(t^{-1})}{t}\cdot
	\frac{\psi^{m-1}(\Q)}{\theta^{m-1}(T^*_\pi)}\,.
	$$
	Applying the pushforward $\pi^*$  to this equation and using Adams-Riemann-Roch (Theorem \ref{tw:ARR})
	we get 
	$$
	\pi_*[\Q^m]=
	\theta^m(t^{-1})\cdot\psi^m(\pi_*[\Q])-\frac{\theta^{m-1}(t^{-1})}{t}\cdot\psi^{m-1}(\pi_*[\Q])\,.
	$$
	The theorem follows from the fact that $\pi_*[\Q]=\PKT{1}{}$ (Proposition \ref{pro:m1})
	and $\psi^m(\PKT{1}{})=\PKT{m}{}$ (Proposition \ref{pro:AdamsKirwan}).
\end{proof}
\begin{rem}
	We proved the pushforward formula for the one dimensional torus $\Tt$. The restriction to this torus is crucial for lemma \ref{lem:Adams1}. The weights at the fixed points simplify and the formula for the Bott class becomes computable. At the same time the fixed point locus is still relatively easy (it is a disjoint union of affine spaces), so the localization theorem may still be used effectively.
\end{rem}
\begin{rem}
	In the $\TT$-equivariant K-theory one can prove that
	$$\pi_*[\Q^{2}]=
	\frac{1-(qt)^{-2}}{1-(qt)^{-1}}\cdot \PKT{2}{}
	-\frac{1}{qt}\cdot\PKT{1}{} +(1-q^{-1})(1-t^{-1})[\Lambda^2\V]\,,$$
	yet this formula does not generalize to higher powers of $\Q$ in an easy way.
\end{rem}

\section{Transition to cohomology}\label{s:Hwyniki}
In this section we transfer our results to cohomology. Our aim is to prove the following theorem.
\begin{atw} \label{tw:pushH} 
	In $\Tt$-equivariant cohomology of the Hilbert scheme $\coh_{\Tt}^{2m}(\Hilb{n+1})$, we have
	$$\pi_*\left(c_1(\Q_n)^m\right)=\sum_{k=0}^{m}a_{k,m}\cdot t^{m-k}\cdot \Pcoh{k}{n+1} \,, $$
	where the $a_{k,m}$ are the coefficients of the polynomial
	$$\sum_{k=0}^{m}a_{k,m}\cdot x^k=x^m(x+1)-(x-1)^mx\,. $$
\end{atw}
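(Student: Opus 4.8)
**The plan is to transfer the K-theoretic pushforward formula (Theorem \ref{tw:push}) to cohomology via the equivariant Grothendieck–Riemann–Roch theorem.**

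We have the K-theoretic formula
$$\pi_*[\Q^{m}]=\frac{1-t^{-m}}{1-t^{-1}}\cdot \PKT{m}{}-\frac{1-t^{-(m-1)}}{1-t^{-1}}\cdot\frac{1}{t}\cdot\PKT{m-1}{}\in \KTh_{\Tt}(\Hilb{n+1})\,,$$
but the left-hand side is a pushforward of a \emph{class}, not a Chern class; to reach $\pi_*(c_1(\Q)^m)$ we must compare this with the Chern character picture. The mismatch between the multiplicativity of the Chern character and the twist by the Todd class of the virtual tangent bundle $T_\pi$ is exactly what GRR controls.

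Here is the intended execution. First I would fix the variable dictionary: under the Chern character the $\Tt$-weight $t$ corresponds to the equivariant cohomology variable (call it $t$ as well, abusing notation per the paper's convention), and the line bundle $\Q$ has $\ch(\Q)=e^{c_1(\Q)}$. I would apply the cohomological GRR to the proper map $\pi$, which reads $\pi_*\big(\ch(\alpha)\cdot\td(T_\pi)\big)=\ch(\pi_*\alpha)$, with $T_\pi$ the virtual tangent bundle computed exactly as in Lemma \ref{lem:Adams1}: restricted to a fixed point $(\lambda,(k,l))$ it equals $\CC_t-\CC_{(k+1)t}$, so its first Chern character data are completely explicit. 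Applying $\ch$ to the right-hand side of Theorem \ref{tw:push} and expanding each factor $\tfrac{1-t^{-j}}{1-t^{-1}}$ and the class $\PKT{j}{}$ as power series in the cohomology variable yields a generating identity whose degree-$2m$ part is the sought formula. The role of the polynomial identity $\sum_k a_{k,m}x^k = x^m(x+1)-(x-1)^m x$ is to package the combinatorics of the two Bott/Todd corrections (the $t^{-m}$ and $t^{-(m-1)}$ pieces) into a single closed expression; I expect the variable $x$ in that polynomial to play the role of $c_1(\Q)/t$ or an analogous normalized Chern root, with $t^{m-k}$ tracking the remaining equivariant weight.

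Alternatively, and perhaps more cleanly, I would work directly at the level of fixed-point localization using Proposition \ref{cor:LRR1}, bypassing GRR: since $\pi_*(c_1(\Q)^m)_{|\lambda}=\sum_{c\in C(\lambda)}(c_1(\Q)_{|c})^m\cdot \tfrac{\eu^{\coh}(T_\lambda\Hilb{n+1})}{\eu^{\coh}(T_{(\lambda,c)}\Hilb{n,n+1})}$, and the Euler-class quotient is the cohomological analogue of $r_{\lambda,c}$, I would reuse the same limit/summation combinatorics (the cohomological counterparts of Corollaries \ref{cor:Aplim1}–\ref{cor:Aplim3}) that drove the proof of Proposition \ref{pro:m1}. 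The target is then to show that the two sides agree after restriction to every fixed point $\lambda$, which suffices by Proposition \ref{pro:localization1}. \textbf{The main obstacle} I anticipate is bookkeeping the Todd/Bott correction precisely: one must verify that the formal substitution translating the K-theoretic denominators $1-t^{-j}$ into their cohomological counterparts produces exactly the coefficients $a_{k,m}$ of the stated polynomial, and in particular that the "honest polynomial" structure survives — i.e. that the a priori rational generating function collapses to a genuine degree-$2m$ cohomology class with integer coefficients $a_{k,m}$, with all apparent poles at $t=0$ cancelling. Establishing this cancellation, rather than the GRR machinery itself, is where the real work lies.
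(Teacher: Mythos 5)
Your overall framework (transfer Theorem \ref{tw:push} to cohomology via equivariant GRR) is the one the paper uses, but the proposal has a genuine gap: you explicitly defer the step that actually produces the polynomial $x^m(x+1)-(x-1)^mx$, calling it ``where the real work lies,'' and the mechanism you sketch for it would not work as stated. First, the degree-$2m$ part of $\ch^{\Tt}(\pi_*[\Q^m])$ is \emph{not} $\pi_*(c_1(\Q)^m)$: since $\ch(\Q^m)=e^{mc_1(\Q)}$ has components in every degree, the degree-$2m$ part of $\pi_*(\ch(\Q^m)\cdot\td(T_\pi))$ mixes all powers $c_1(\Q)^j$, $j\le m$, with Todd corrections. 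The paper's Lemma \ref{lem:ch1} removes this problem by pushing forward $(\Q-1)^m$ instead, whose Chern character vanishes below degree $2m$ and equals $c_1(\Q)^m$ there, so the Todd class (which starts with $1$) contributes nothing in that degree; one then expands $(\Q-1)^m$ via Theorem \ref{tw:push} and Corollary \ref{cor:m0}. Second, and more importantly, the paper never carries out the power-series bookkeeping you flag as the main obstacle. Lemma \ref{lem:ch2} only extracts the \emph{existence} of coefficients $a_{k,m}$ independent of $n$; their values are then pinned down by restricting the identity to the single-column partition $\lambda=(n+1)$ for every $n$, where a one-corner localization gives $\pi_*(c_1(\Q_n)^m)_{|\lambda}=n^m(n+1)t^m$ (Lemma \ref{lem:LRRcoh}) while the right-hand side evaluates to $\big(\sum_{j=0}^{n}W_m(j)\big)t^m$ with $W_m(x)=\sum_k a_{k,m}x^k$. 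The resulting system $\sum_{j=0}^{n}W_m(j)=n^m(n+1)$ for all $n$ forces $W_m(x)=x^m(x+1)-(x-1)^mx$. This evaluation trick is the missing idea; note also that $x$ is a normalized Chern root of $\V_{n+1}$ (the roots at the column partition are $0,t,\dots,nt$), not $c_1(\Q)/t$ as you guessed.

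Your alternative route --- direct cohomological localization at all fixed points --- faces an additional obstruction you do not address: the $\Tt$-fixed locus of $\Hilb{n,n+1}$ is a union of positive-dimensional affine cells, so isolated fixed-point localization is only available for the full torus $\TT$, where the cohomological Euler class quotients are rational functions of both $q$ and $t$. Reproving the statement this way would require cohomological analogues of the appendix results (Corollaries \ref{cor:Aplim1}--\ref{cor:Aplim3}), whose $q\to 0$ and $q\to\infty$ limit arguments are specific to the multiplicative (K-theoretic) Euler classes and do not transfer verbatim. The paper avoids all of this by needing localization only at the one partition where there is a single corner and no cancellation to control.
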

\begin{cor} \label{cor:pushH}
	In non-equivariant cohomology, we have
	$$\pi_*\left(c_1(\Q_n)^m\right)=(m+1)\cdot \Pcoh{m}{n+1} \in \coh^{2m}(\Hilb{n+1})\,. $$
\end{cor}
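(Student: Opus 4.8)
The plan is to obtain the corollary as a direct specialization of the equivariant Theorem \ref{tw:pushH}. Recall that non-equivariant cohomology is recovered from $\Tt$-equivariant cohomology by the base change along $\coh^*_{\Tt}(pt)=\ZZ[t]\to\ZZ=\ZZ[t]/(t)$, which concretely amounts to setting the equivariant parameter $t$ to $0$. The forgetful map $\coh^*_{\Tt}(\Hilb{n+1})\to\coh^*(\Hilb{n+1})$ is a ring homomorphism that commutes with the proper pushforward $\pi_*$, and it sends the equivariant class $c_1(\Q_n)$ to its non-equivariant counterpart and each equivariant power-sum class $\Pcoh{k}{n+1}$ (the image of $p_k$ under $\kappa^{\coh}$) to the non-equivariant class denoted by the same symbol in Definition \ref{df:Powersum}. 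Thus I would simply apply this specialization term by term to the formula of Theorem \ref{tw:pushH}.

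First I would substitute $t=0$ into the equivariant expression
$$\pi_*\left(c_1(\Q_n)^m\right)=\sum_{k=0}^{m}a_{k,m}\cdot t^{m-k}\cdot \Pcoh{k}{n+1}\,.$$
Every summand with $k<m$ carries a strictly positive power $t^{m-k}$ and therefore vanishes in the non-equivariant limit, so only the $k=m$ term (with $t^{0}=1$) survives, leaving $a_{m,m}\cdot\Pcoh{m}{n+1}$.

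It then remains to evaluate the single coefficient $a_{m,m}$, defined as the coefficient of $x^m$ in $x^m(x+1)-(x-1)^m x$. Expanding $x^m(x+1)=x^{m+1}+x^m$ contributes $1$ to the coefficient of $x^m$, while the coefficient of $x^m$ in $(x-1)^m x$ equals the coefficient of $x^{m-1}$ in $(x-1)^m$, namely $\binom{m}{m-1}(-1)=-m$; hence $-(x-1)^m x$ contributes $+m$. Adding these gives $a_{m,m}=1+m=m+1$, so that $\pi_*\left(c_1(\Q_n)^m\right)=(m+1)\cdot\Pcoh{m}{n+1}$, as claimed. There is essentially no obstacle here: all the geometric content is already contained in Theorem \ref{tw:pushH}, and the corollary reduces to a one-step specialization together with the elementary extraction of the top coefficient of the defining polynomial.
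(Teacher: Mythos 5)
Your proposal is correct and matches the paper's intended derivation: the corollary is obtained from Theorem \ref{tw:pushH} by passing to non-equivariant cohomology (setting $t=0$), which kills all terms with $k<m$, and the coefficient $a_{m,m}=m+1$ is read off from the defining polynomial exactly as you compute it. Nothing further is needed.
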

To prove the above theorem we use the equivariant Grothendieck--Riemann--Roch theorem \cite{EGGRR}.
Let $\ch^\Tt$
be the $\Tt$-equivariant Chern character and $\td^\Tt$ the $\Tt$-equivariant Todd class. The Todd class is a multiplicative characteristic class corresponding to the power series
$$\frac{x}{1-e^{-x}}=1+\frac{x}{2}+\cdots \,.$$
The Grothendieck--Riemann--Roch theorem implies that for
an element $\mathcal{E}\in \KTh_{\Tt}(\Hilb{n,n+1})$, we have
\begin{align} \label{w:GRR}
	\ch^{\Tt}(\pi_*\mathcal{E})=\pi_*(\ch^{\Tt}(\mathcal{E})\cdot \td^{\Tt}(T_\pi))\in \coh^*_{\Tt}(\Hilb{n+1})\,,
\end{align}
where $T_\pi$ is the relative tangent bundle to the projection $\pi$, i.e.
$$\td^{\Tt}(T_\pi)=\frac{\td^{\Tt}(T\Hilb{n,n+1})}{\td^{\Tt}(\pi^*T\Hilb{n+1})}\,.$$
The Todd class corresponds to a power series starting with $1$, therefore
\begin{align}\label{w:td}
	\td^{\Tt}(T_\pi)=1+\coh_\Tt^{> 0}(\Hilb{n,n+1})\,.
\end{align}

We split the proof of Theorem \ref{tw:pushH} into several lemmas.
\begin{lemma} \label{lem:ch1}
	Let $\mathcal{L}$ be a $\Tt$-equivariant line bundle on $\Hilb{n,n+1}$. Consider the class
	$$\ch^{\Tt}\big(\pi_*\big((\mathcal{L}-1)^m\big)\big)\in \coh^*_\Tt(\Hilb{n+1})\,.$$
	Its homogeneous part of degree $2m$ is equal to $\pi_*\left(c_1(\mathcal{L})^m\right)$. 
\end{lemma}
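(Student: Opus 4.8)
The plan is to apply the equivariant Grothendieck--Riemann--Roch theorem in the form of Equation \eqref{w:GRR} to the class $\mathcal{E}=(\mathcal{L}-1)^m\in\KTh_\Tt(\Hilb{n,n+1})$ and then to isolate the lowest cohomological degree. First I would write
$$\ch^{\Tt}\big(\pi_*((\mathcal{L}-1)^m)\big)=\pi_*\big(\ch^{\Tt}((\mathcal{L}-1)^m)\cdot\td^{\Tt}(T_\pi)\big)\,.$$
Since the Chern character is a ring homomorphism, this factors as $\ch^{\Tt}((\mathcal{L}-1)^m)=(\ch^{\Tt}(\mathcal{L})-1)^m$, which turns the problem into a degree count.

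Next I would analyze the cohomological degrees on the right-hand side. Using $\ch^{\Tt}(\mathcal{L})=e^{c_1(\mathcal{L})}=1+c_1(\mathcal{L})+\tfrac{1}{2}c_1(\mathcal{L})^2+\cdots$, the class $\ch^{\Tt}(\mathcal{L})-1$ has lowest-degree term $c_1(\mathcal{L})$ in degree $2$, so $(\ch^{\Tt}(\mathcal{L})-1)^m=c_1(\mathcal{L})^m+(\text{terms of degree}>2m)$. Multiplying by $\td^{\Tt}(T_\pi)=1+\coh_\Tt^{>0}(\Hilb{n,n+1})$, as recorded in Equation \eqref{w:td}, does not alter the lowest-degree term, so
$$\ch^{\Tt}((\mathcal{L}-1)^m)\cdot\td^{\Tt}(T_\pi)=c_1(\mathcal{L})^m+(\text{terms of degree}>2m)\,.$$

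The key point is that $\pi$ is a proper morphism between smooth varieties of equal dimension $2n+2$ (indeed $\dim\Hilb{n,n+1}=\dim\Hilb{n+1}$), so the Gysin pushforward $\pi_*$ has relative dimension zero and therefore preserves cohomological degree. Consequently $\pi_*$ sends the degree-$2m$ summand to degree $2m$ and every higher-degree summand to degree strictly above $2m$. Extracting the homogeneous component of degree $2m$ on both sides then yields exactly $\pi_*\big(c_1(\mathcal{L})^m\big)$, as claimed.

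I do not anticipate a genuine obstacle here; the only step requiring care is the degree bookkeeping, and in particular the verification that $\pi_*$ is degree-preserving, which is what guarantees that none of the higher-degree contributions in $\ch^{\Tt}((\mathcal{L}-1)^m)\cdot\td^{\Tt}(T_\pi)$ can leak into degree $2m$ after pushforward.
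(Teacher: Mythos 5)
Your proposal is correct and follows essentially the same route as the paper: apply the equivariant Grothendieck--Riemann--Roch formula, observe that $\ch^{\Tt}((\mathcal{L}-1)^m)=(e^{c_1(\mathcal{L})}-1)^m$ vanishes below degree $2m$ with degree-$2m$ part $c_1(\mathcal{L})^m$, note that multiplying by $\td^{\Tt}(T_\pi)=1+\coh_\Tt^{>0}$ does not change this, and conclude using the fact that $\pi_*$ preserves the grading. Your explicit justification that $\pi_*$ is degree-preserving via $\dim\Hilb{n,n+1}=\dim\Hilb{n+1}=2n+2$ is a correct elaboration of a point the paper simply asserts.
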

\begin{proof}
	Formula \eqref{w:GRR} implies that
	$$\ch^{\Tt}\big(\pi_*\big((\mathcal{L}-1)^m\big)\big)=\pi_*\big(\ch^{\Tt}\big((\mathcal{L}-1)^m\big)\cdot \td^{\Tt}(T_\pi)\big)\,. $$
	The Chern character is multiplicative and additive, therefore
	$$\ch^{\Tt}\big((\mathcal{L}-1)^m\big)=\left(e^{c_1(\mathcal{L}_n)}-1\right)^m=
	\begin{cases}
		0  \text{ in degrees } 0,1,\dots,2m-1 \\
		c_1(\mathcal{L})^m \text{ in degree } 2m.
	\end{cases} $$
	Equation \eqref{w:td} implies that the same formula is true after multiplication with the Todd class, i.e.
	$$\ch^{\Tt}\big((\mathcal{L}-1)^m\big)\cdot \td^{\Tt}(T_\pi)=
	\begin{cases}
		0  \text{ in degrees } 0,1,\dots,2m-1 \\
		c_1(\mathcal{L})^m \text{ in degree } 2m.
	\end{cases} $$
	The lemma follows from the fact that the pushforward $\pi_*$ preserves the grading.
\end{proof}

\begin{lemma} \label{lem:ch2}
	Let $m$ be a non-negative integer. There exist rational numbers  \hbox{$a_{0,m},\dots,a_{m,m}$} such that for an arbitrary $n\in \NN$ we have 
	$$\pi_*\big(c_1(\Q_n)^m\big)=
	\sum_{k=0}^{m}a_{k,m} \cdot t^{m-k}\cdot\Pcoh{k}{n+1}\in \coh_{\Tt}^{2m}(\Hilb{n+1})\,. $$
\end{lemma}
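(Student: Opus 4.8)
The plan is to apply the $m$-th power case of Theorem \ref{tw:push} (the $\KTh$-theoretic pushforward of $\Q^m$) and extract the homogeneous degree-$2m$ part of its Chern character, using Lemma \ref{lem:ch1} as the bridge between the two theories. First I would take the equation in Theorem \ref{tw:push},
\[
\pi_*[\Q^{m}]=
\frac{1-t^{-m}}{1-t^{-1}}\cdot \PKT{m}{}
-\frac{1-t^{-(m-1)}}{1-t^{-1}}\cdot\frac{1}{t}\cdot\PKT{m-1}{},
\]
and rewrite the left-hand side so that it matches the input of Lemma \ref{lem:ch1}. The issue is that Lemma \ref{lem:ch1} is phrased in terms of $\pi_*\big((\Q-1)^m\big)$, not $\pi_*[\Q^m]$. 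So the first step is to expand $(\Q-1)^m=\sum_{j=0}^{m}\binom{m}{j}(-1)^{m-j}\Q^{j}$ and apply $\pi_*$ termwise, expressing $\pi_*\big((\Q-1)^m\big)$ as an explicit $\ZZ$-linear combination of the classes $\pi_*[\Q^{j}]$ for $0\le j\le m$, each of which is given by Theorem \ref{tw:push} as a $\KTh_\Tt(pt)$-combination of the power-sum classes $\PKT{i}{}$.

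Next I would compute the $\Tt$-equivariant Chern character of this combination. Since $\ch^{\Tt}$ is a ring homomorphism, $\ch^{\Tt}(\PKT{i}{})$ is obtained by applying the Kirwan map to $\ch$ of the power sum $p_i(x_0,\dots,x_n)$ in the $\KTh$-theoretic Chern roots; concretely, if $y_0,\dots,y_n$ denote the cohomological Chern roots then $\ch^{\Tt}(\PKT{i}{})=\kappa^{\coh}\big(\sum_j e^{iy_j}\big)=\sum_{k\ge 0}\frac{i^k}{k!}\,\Pcoh{k}{n+1}$, an expansion into the cohomological power-sum classes $\Pcoh{k}{n+1}$ weighted by powers of $i$. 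Likewise the scalar coefficients $\frac{1-t^{-m}}{1-t^{-1}}$ etc. are expanded as power series in the cohomological generator $t$ via $t^{-1}=e^{-t}$ (abusing $t$ for both the character and its first Chern class, as the paper does). By Lemma \ref{lem:ch1}, the degree-$2m$ component of $\ch^{\Tt}\big(\pi_*\big((\Q-1)^m\big)\big)$ equals $\pi_*\big(c_1(\Q)^m\big)$, so I would collect exactly the terms of total cohomological degree $2m$. Each such term is a product of some power $t^{m-k}$ of the scalar variable with a single power-sum class $\Pcoh{k}{n+1}$ of degree $2k$, which is precisely the claimed shape $\sum_{k=0}^m a_{k,m}\, t^{m-k}\,\Pcoh{k}{n+1}$ with $a_{k,m}\in\QQ$.

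The main thing to verify — and the step I expect to carry the real content — is that after this extraction only \emph{linear} terms in the power-sum classes survive, i.e. no products $\Pcoh{k_1}{}\Pcoh{k_2}{}$ appear; this is automatic here because $\pi_*[\Q^m]$ is already linear in the $\PKT{i}{}$ by Theorem \ref{tw:push}, and $\ch^{\Tt}$ sends each $\PKT{i}{}$ to a linear combination of the $\Pcoh{k}{}$, so linearity is preserved throughout. The coefficients $a_{k,m}$ that emerge are manifestly rational (they are built from binomial coefficients and factorials) and, crucially, \emph{independent of $n$}, since every ingredient — the coefficients in Theorem \ref{tw:push}, the binomial expansion of $(\Q-1)^m$, and the Taylor coefficients of $\ch^{\Tt}(\PKT{i}{})$ — is independent of $n$. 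This $n$-independence is exactly what the lemma asserts, so identifying the explicit generating function $\sum_k a_{k,m}x^k = x^m(x+1)-(x-1)^m x$ of Theorem \ref{tw:pushH} is deferred; here it suffices to record existence and $n$-independence of the $a_{k,m}$.
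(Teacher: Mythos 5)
Your proposal follows essentially the same route as the paper: both write $\pi_*\big((\Q_n-1)^m\big)$ as a $\KTh_{\Tt}(pt)$-linear combination of the classes $\PKT{s}{n+1}$ via Theorem \ref{tw:push} (with Corollary \ref{cor:m0} supplying the $j=0$ term, which contributes $\PKT{-1}{}$), then expand $\ch^{\Tt}(\PKT{s}{})=\sum_{k\ge 0}\frac{s^k}{k!}\Pcoh{k}{n+1}$ and $t^{-1}=e^{-t}$, and finally extract the degree-$2m$ component using Lemma \ref{lem:ch1}. Your observations that linearity in the power-sum classes is preserved and that the resulting coefficients are rational and independent of $n$ match the paper's argument exactly.
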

\begin{proof}
	Theorem \ref{tw:push} and Corollary \ref{cor:m0} imply that there exist polynomials
	$A_{s,m}\in \ZZ[x]$
	such that, for an arbitrary $n\in \NN$, we have 
	$$\pi_*\big((\Q_n-1)^m\big)=
	\sum_{s=-1}^m A_{s,m}(t^{-1})\cdot \PKT{s}{n+1} \in \KTh_{\Tt}(\Hilb{n+1})\,.$$
	The Chern character satisfies
	$$\ch^{\Tt}(A_{s,m}(t^{-1}))=A_{s,m}(e^{-t})\,, \qquad\ch^{\Tt}(\PKT{s}{n+1})=e^{s\Pcoh{1}{n+1}}=
	\sum_{k=0}^\infty \frac{s^k\cdot \Pcoh{k}{n+1}}{k!}\,.$$
	Therefore, we have the following equality in $\coh^*_{\Tt}(\Hilb{n+1})$.
	\begin{align*}
		\ch^{\Tt}\big(\pi_*\big((\Q_n-1)^m\big)\big)&=
		\sum_{s=-1}^m \ch^{\Tt}(A_{s,m}(t^{-1}))\cdot \ch^{\Tt}(\PKT{s}{n+1}) \\
		&=\sum_{s=-1}^m\left(A_{s,m}(e^{-t})\cdot \sum_{k=0}^\infty \frac{s^k\cdot \Pcoh{k}{n+1}}{k!}\right)\\
		&=\sum^\infty_{k=0}\left(\Pcoh{k}{n+1}\cdot\sum_{s=-1}^{m}\frac{s^k\cdot A_{s,m}(e^{-t})}{k!} \right)\,.
	\end{align*}
	The expression
	$$B_{k,m}=\sum_{s=-1}^{m}\frac{s^k\cdot A_{s,m}(e^{-x})}{k!} \in \QQ[[x]] $$
	is a power series.
	Thanks to Lemma \ref{lem:ch1} the number $a_{k,m}$ is the coefficient of $B_{k,m}$ corresponding to $x^{m-k}$.
\end{proof} 
\begin{rem}
	The polynomials $A_{s,m}$ from the proof of Lemma \ref{lem:ch2} can be computed explicitly. For $s \ge 1$ we have
	$$A_{s,m}(x)=\binom{m}{s}\cdot(-1)^s\cdot(1+x+\dots+x^s)-\binom{m}{s+1}\cdot(-1)^{s+1} \cdot(x+\dots+x^s) \,.$$
	For $s\in\{-1, 0\}$ we have $A_{0,m}(x)=0$ and $A_{-1,m}(x)=(-1)^m$.
\end{rem}
\begin{lemma} \label{lem:LRRcoh}
	Let $\lambda=(n+1)$ be the partition of $n+1$ consisting of a single summand. In the $\TT$-equivariant cohomology we have 
	$$\pi_*\big(c_1(\Q_n)^m\big)_{|\lambda}=n^m(n+1)\cdot t^m \in \ZZ[t,q]\,. $$
	The same formula is true in the $\Tt$-equivariant cohomology.
\end{lemma}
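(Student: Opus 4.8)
The plan is to evaluate the left-hand side directly by equivariant localization, using the cohomological (Atiyah--Bott) analogue of the fixed-point push-forward formula of Proposition \ref{cor:LRR1}. For a fixed point $\lambda\in\Hilb{n+1}^\TT$ it reads
$$\pi_*\big(c_1(\Q_n)^m\big)_{|\lambda}=\sum_{c\in C(\lambda)}c_1(\Q_n)^m_{|(\lambda,c)}\cdot\frac{\eu\big(T_\lambda\Hilb{n+1}\big)}{\eu\big(T_{(\lambda,c)}\Hilb{n,n+1}\big)}\,,$$
where now $\eu$ denotes the top equivariant Chern class, that is, the product of the tangent weights read as linear forms $(k,l)\mapsto kq+lt\in\coh^*_\TT(pt)=\ZZ[q,t]$. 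The decisive simplification is that $\lambda=(n+1)$ is a single column, hence has a unique corner, namely the top box $c=(0,n)$; the sum therefore reduces to a single term.

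First I would record the local factor at $(\lambda,c)$. Since $\Q_n$ restricts to the character $q^0t^n=t^n$ there, we have $c_1(\Q_n)_{|(\lambda,c)}=nt$ and so $c_1(\Q_n)^m_{|(\lambda,c)}=n^mt^m$.

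The core of the argument is the Euler-class ratio. By the weight description of Section \ref{s:nested}, at $\lambda\in\Hilb{n+1}$ the $2(n+1)$ tangent weights split, box by box, into $(1,-b)$ and $(0,b+1)$, where $b$ is the leg length and runs over $0,\dots,n$. Passing to the nested fixed point $(\lambda,c)$, the modification rule shortens only the upward arrows of the boxes strictly below the added corner, replacing $(0,b+1)$ by $(0,b)$ for $b=1,\dots,n$, while the weights $(1,-b)$ and the two weights of the corner box are left unchanged. Reading these as linear forms, the factors $\prod_{b=0}^{n}(q-bt)$ occur identically in numerator and denominator and cancel, so only the vertical factors survive:
$$\frac{\eu\big(T_\lambda\Hilb{n+1}\big)}{\eu\big(T_{(\lambda,c)}\Hilb{n,n+1}\big)}=\frac{\prod_{b=0}^{n}(b+1)t}{t\cdot\prod_{b=1}^{n}bt}=\frac{(n+1)!\,t^{n+1}}{n!\,t^{n+1}}=n+1\,.$$
Multiplying by the local factor $n^mt^m$ yields $n^m(n+1)t^m$, as claimed. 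The step I expect to demand the most care is the bookkeeping of the nested tangent weights and checking that precisely the vertical factors fail to cancel, leaving the single ratio $(n+1)!/n!$.

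For the $\Tt$-equivariant statement I would appeal to the compatibility of the proper push-forward with the restriction of the acting group from $\TT$ to $\Tt$: the forgetful homomorphism $\coh^*_\TT\to\coh^*_{\Tt}$ sends $q\mapsto 0$ and $t\mapsto t$, and commutes both with $\pi_*$ and with restriction to the $\TT$-fixed point $\lambda$. Since the $\TT$-equivariant answer $n^m(n+1)t^m$ involves no $q$, it is unchanged under this map, so the identical formula holds in $\coh_{\Tt}$.
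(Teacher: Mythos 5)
Your proposal is correct and follows essentially the same route as the paper: localization of $\pi_*$ at the unique nested fixed point over the one-column partition, with $c_1(\Q_n)_{|(\lambda,c)}=nt$ and the Euler-class ratio equal to $n+1$, followed by restriction from $\TT$ to $\Tt$. The only difference is that you write out the weight-by-weight cancellation giving the ratio $(n+1)!/n!$, which the paper asserts without detail.
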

\begin{proof}
	The Young diagram of the partition $\lambda$ has only one corner $c$. The Lefschetz--Riemann--Roch formula in cohomology \cite{AB,BV} implies that
	$$
	\pi_*\left(c_1(\Q_n)^m\right)_{|\lambda}=
	\frac{\eu^{\coh}(T_{\lambda}\Hilb{n+1})}
	{\eu^{\coh}(T_{\lambda,c}\Hilb{n+1})}\cdot c_1(\Q_n)_{|\lambda}^m
	=(n+1)\cdot(nt)^m=n^m(n+1)\cdot t^m\,.
	$$
	Here $\eu^{\coh}$ denotes the equivariant cohomological Euler class. The result in $\Tt$-equivariant case follows from the formula in $\TT$-equivariant cohomology.
\end{proof}

\begin{proof}[Proof of Theorem \ref{tw:pushH}]
	Fix a positive integer $m$. Consider a polynomial
	$$W_m(x)= \sum_{k=0}^{m}a_{k,m}x^k \in\QQ[x]$$
	where $a_{k,m}$ are the rational numbers from Lemma \ref{lem:ch2}. We need to prove that
	\begin{align} \label{eq:polynomial}
		W_m(x)= x^m(x+1)-(x-1)^mx\,.
	\end{align}
	Let $\lambda=(n+1)$ be the partition of $n+1$ consisting of a single summand. It corresponds to  the vertical Young diagram. We have
	\begin{align*}
		\Big(\sum_{k=0}^{m}a_{k,m}\cdot t^{m-k}\cdot \Pcoh{k}{n+1}\Big)_{|\lambda}&=
		\sum_{k=0}^{m}a_{k,m}\cdot t^{m-k}\cdot p_{k}(0,t,2t,\dots,nt) \\
		&=\left(W_m(0)+W_m(1)+\dots+W_m(n)\right)\cdot t^m \,. 
	\end{align*}
	Lemmas \ref{lem:ch2} and \ref{lem:LRRcoh} provide an alternative way to compute this element. It follows that for all $n \in \NN$ we have
	$$W_m(0)+W_m(1)+\dots+W_m(n)=n^m(n+1)\,. $$
	Therefore for all $n\in \NN$ we have
	$$W_m(n)=n^m(n+1)-(n-1)^mn \,,$$
	which implies formula \eqref{eq:polynomial}.
\end{proof}
\section{Formula for Nakajima's creation operators}\label{s:Nakwyniki}
The main goal of this paper is to answer the question of Nakajima \cite[Question 9.6]{Nak1}, i.e. to compute classes
$$\qq_i(c_k(\V_n)) \in \coh^{2k+2(i-1)}(\Hilb{n+i})\ $$
in terms of the Kirwan map. We also consider a variant of this question in equivariant cohomology.
\subsection{Operator $\qq_1$}
Let us recall that we consider the projection maps
$$p: \Hilb{n,n+1} \to \Hilb{n}\,, \qquad \pi: \Hilb{n,n+1} \to \Hilb{n+1}\,.$$

\begin{pro} \label{pro:q1}
	We have the following equality in equivariant K-theory $\KTh_{\TT}(\Hilb{n,n+1})$.
	\begin{align*}
		[p^*\V_n]=[\pi^*\V_{n+1}]-[\Q_n] \,.
	\end{align*}
 We have the following equalities in equivariant cohomology $\coh^*_{\TT}(\Hilb{n,n+1})$.
	\begin{align*}
		p^*c_\bullet(\V_n)&=\pi^*\big(c_\bullet(\V_{n+1})\big)\cdot (1+c_1(\Q_n))^{-1} \,,\\
		p^*\Pcoh{k}{n}&=\pi^*\Pcoh{k}{n+1} -c_1(\Q_n)^{k}	 \,,
	\end{align*}
	where $c_\bullet(-)$ denotes the full Chern class.
\end{pro}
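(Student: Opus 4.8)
The plan is to derive all three identities from the single defining short exact sequence of the tautological line bundle. By construction $\Q_n$ is the kernel of the canonical $\TT$-equivariant epimorphism $\pi^{*}\V_{n+1}\onto p^{*}\V_n$, so on $\Hilb{n,n+1}$ we have a short exact sequence of $\TT$-equivariant vector bundles
\[
0\to \Q_n\to \pi^{*}\V_{n+1}\to p^{*}\V_n\to 0\,.
\]
Each of the three identities is then an application of a standard additivity or multiplicativity property of this sequence. First I would dispose of the K-theoretic identity: since a class in $\KTh_{\TT}$ is additive on short exact sequences, the sequence immediately gives $[\pi^{*}\V_{n+1}]=[\Q_n]+[p^{*}\V_n]$, which rearranges to the stated $[p^{*}\V_n]=[\pi^{*}\V_{n+1}]-[\Q_n]$.

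Next, for the full Chern class I would apply the Whitney sum formula to the same sequence. Because $\Q_n$ is a line bundle (so $c_\bullet(\Q_n)=1+c_1(\Q_n)$) and the total Chern class commutes with pullback, this yields the honest identity
\[
\pi^{*}c_\bullet(\V_{n+1})=(1+c_1(\Q_n))\cdot p^{*}c_\bullet(\V_n)\in\coh^{*}_{\TT}(\Hilb{n,n+1})\,.
\]
The displayed formula $p^{*}c_\bullet(\V_n)=\pi^{*}c_\bullet(\V_{n+1})\cdot(1+c_1(\Q_n))^{-1}$ is precisely the rearrangement of this relation, where $(1+c_1(\Q_n))^{-1}$ is read as the formal inverse $\sum_{j\ge 0}(-c_1(\Q_n))^{j}$ in the completed ring $\prod_k\coh^{2k}_{\TT}$; since it is obtained by dividing the Whitney relation, the resulting product lands back in genuine equivariant cohomology and equals $p^{*}c_\bullet(\V_n)$.

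Finally, for the power-sum identity I would invoke the splitting principle applied to the same sequence. It exhibits $\Q_n$ as a line sub-bundle, so after passing to the flag bundle the Chern roots of $\pi^{*}\V_{n+1}$ are exactly $c_1(\Q_n)$ together with the Chern roots of $p^{*}\V_n$. By the definition of the Kirwan images, $\pi^{*}\Pcoh{k}{n+1}$ is the $k$-th power sum of the roots of $\pi^{*}\V_{n+1}$ and $p^{*}\Pcoh{k}{n}$ is the $k$-th power sum of the roots of $p^{*}\V_n$; since the power sum of a disjoint union of multisets of roots is the sum of the power sums, and the power sum attached to the single root $c_1(\Q_n)$ is just $c_1(\Q_n)^{k}$, I obtain $\pi^{*}\Pcoh{k}{n+1}=c_1(\Q_n)^{k}+p^{*}\Pcoh{k}{n}$, which rearranges to the claim. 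The only point requiring a modicum of care, rather than a genuine obstacle, is this last step: one must verify that the roots of $\pi^{*}\V_{n+1}$ genuinely split off $c_1(\Q_n)$ from those of $p^{*}\V_n$. This is exactly the content of the splitting principle for the short exact sequence, which holds verbatim in the $\TT$-equivariant setting, so no essential difficulty arises.
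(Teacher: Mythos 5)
Your proposal is correct and follows exactly the paper's route: the paper derives all three identities from the same short exact sequence $0\to \Q_n\to \pi^{*}\V_{n+1}\to p^{*}\V_n\to 0$, leaving the routine applications of K-theoretic additivity, the Whitney formula, and the splitting principle implicit. You have simply spelled out those standard steps in more detail.
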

\begin{proof}
	All equations follow from the short exact sequence
	\[0\to \Q_n \to \pi^{\ast}\V_{n+1} \to p^{\ast}\V_n  \to 0 \qedhere \]
\end{proof}

The above proposition together with the projection formula give formulas for the operator $\qq_1$.
\begin{cor} \label{cor:q1}
	Let $k\ge 0$. The following holds in the equivariant cohomology $\coh_\TT^{2k}(\Hilb{n+1})$.
	\begin{align}
		\nonumber\qq_1(c_k(\V_n))=&\sum_{m=0}^k (-1)^m\cdot c_{k-m}(\V_{n+1})\cdot\pi_*(c_1^m(\Q_n)) \,, \\
		\label{eq:powersum}\qq_1(\Pcoh{k}{n})=&\Pcoh{k}{n+1}\cdot\pi_*(1)-\pi_*(c_1^k(\Q_n))\,.
	\end{align}
\end{cor}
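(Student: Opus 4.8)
The plan is to combine the identity $\qq_1(\alpha) = \pi_* p^*(\alpha)$ from the preceding remark with the pullback formulas of Proposition \ref{pro:q1}, and then push forward using the projection formula. No essential difficulty is expected; the whole argument is a short manipulation of these three ingredients.

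For the power-sum identity I would start from the second cohomological formula of Proposition \ref{pro:q1}, namely $p^*\Pcoh{k}{n} = \pi^*\Pcoh{k}{n+1} - c_1(\Q_n)^k$, and apply $\pi_*$ to obtain
$$\qq_1(\Pcoh{k}{n}) = \pi_*\big(\pi^*\Pcoh{k}{n+1}\big) - \pi_*\big(c_1(\Q_n)^k\big).$$
The projection formula $\pi_*(\pi^*\beta \cdot \gamma) = \beta \cdot \pi_*(\gamma)$, applied to the first term with $\beta = \Pcoh{k}{n+1}$ and $\gamma = 1$, turns it into $\Pcoh{k}{n+1} \cdot \pi_*(1)$, which gives \eqref{eq:powersum} at once.

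For the Chern-class identity the additional input is that $\Q_n$ is a line bundle, so its first cohomological formula $p^*c_\bullet(\V_n) = \pi^*(c_\bullet(\V_{n+1}))\cdot(1+c_1(\Q_n))^{-1}$ can be expanded using the geometric series $(1+c_1(\Q_n))^{-1} = \sum_{m\ge 0}(-1)^m c_1(\Q_n)^m$, which is finite in each cohomological degree. Extracting from $p^*c_\bullet(\V_n)$ the homogeneous piece corresponding to $c_k$ — i.e. collecting the terms of the correct total degree — gives
$$p^*c_k(\V_n) = \sum_{m=0}^{k}(-1)^m \pi^*\big(c_{k-m}(\V_{n+1})\big)\cdot c_1(\Q_n)^m.$$
Applying $\pi_*$ termwise and again invoking the projection formula (now with $\beta = c_{k-m}(\V_{n+1})$ and $\gamma = c_1(\Q_n)^m$) moves each $c_{k-m}(\V_{n+1})$ outside the pushforward, yielding the stated sum.

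The only step needing a little care is the degree bookkeeping in the second part: since $c_1(\Q_n)^m$ and $c_{k-m}(\V_{n+1})$ have complementary degrees summing to that of $c_k(\V_n)$, exactly the indices $0 \le m \le k$ survive, which both justifies truncating the geometric series and produces the finite range of summation in the statement. Everything takes place in the $\TT$-equivariant cohomology ring, where $\pi_*$ is $\coh^*_\TT(\Hilb{n+1})$-linear, so the projection formula applies verbatim.
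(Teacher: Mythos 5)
Your argument is correct and is exactly the paper's intended proof: the paper's own justification is the one-line remark that Proposition \ref{pro:q1} together with the projection formula (applied to $\qq_1(\alpha)=\pi_*p^*(\alpha)$) yields the corollary, and your write-up simply makes the degree-by-degree expansion of $(1+c_1(\Q_n))^{-1}$ and the two applications of the projection formula explicit. Nothing further is needed.
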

\begin{rem}
	The maps forgetting the torus action
	$$\coh_\TT^{\ast}(\Hilb{n+1})\to \coh_\Tt^{\ast}(\Hilb{n+1})\qquad \coh_\TT^{\ast}(\Hilb{n+1}) \to \coh^{\ast}(\Hilb{n+1}) $$
	commute with Kirwan map, as well as with pushforwards and pullbacks. Therefore, the  equalities from Corollary \ref{cor:q1} hold also in $\coh_\Tt^{*}(\Hilb{n+1})$ and $\coh^{*}(\Hilb{n+1})$.
\end{rem}
Corollary \ref{cor:pushH} implies the following result.
\begin{pro} \label{tw:q1}
	Let $k\ge 0$. The following holds in nonequivariant cohomology $\coh^{2k}(\Hilb{n+1})$.
	$$\qq_1(c_k(\V_n))=\sum_{m=0}^k (-1)^m(m+1)\cdot c_{k-m}(\V_{n+1})\cdot \Pcoh{m}{n+1}\,.$$
\end{pro}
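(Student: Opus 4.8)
The plan is to obtain this formula by feeding the nonequivariant pushforward computation of Corollary \ref{cor:pushH} into the general expression for $\qq_1$ acting on Chern classes supplied by Corollary \ref{cor:q1}. All the genuine geometric content has already been extracted upstream, so the only remaining work is a term-by-term substitution.

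First I would invoke the remark following Corollary \ref{cor:q1}, which guarantees that the identity
$$\qq_1(c_k(\V_n))=\sum_{m=0}^k (-1)^m\cdot c_{k-m}(\V_{n+1})\cdot\pi_*\big(c_1(\Q_n)^m\big)$$
holds not only $\TT$-equivariantly but also in nonequivariant cohomology $\coh^{k}(\Hilb{n+1})$, since the map forgetting the torus action commutes with $\kappa$, with pushforwards and with pullbacks. This reduces the problem to understanding each pushforward $\pi_*\big(c_1(\Q_n)^m\big)$ in the nonequivariant theory, which is precisely the content of Corollary \ref{cor:pushH}.

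Next I would substitute $\pi_*\big(c_1(\Q_n)^m\big)=(m+1)\cdot\Pcoh{m}{n+1}$ into every summand. For $1\le m\le k$ this is immediate; the only point deserving attention is the boundary term $m=0$, where $c_1(\Q_n)^0=1$ and one needs $\pi_*(1)=n+1=\Pcoh{0}{n+1}$, reflecting that $\pi$ is generically finite of degree $n+1$ (over a reduced length-$(n+1)$ subscheme there are exactly $n+1$ ways to remove a point). With this the $m=0$ term reads $(n+1)\cdot c_k(\V_{n+1})$, matching the claimed summand $(-1)^0(0+1)\,c_k(\V_{n+1})\,\Pcoh{0}{n+1}$. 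Carrying out the substitution in all terms yields
$$\qq_1(c_k(\V_n))=\sum_{m=0}^k (-1)^m(m+1)\cdot c_{k-m}(\V_{n+1})\cdot \Pcoh{m}{n+1}\,,$$
which is exactly the asserted formula.

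The step that could in principle be the obstacle lies entirely outside this argument: it is the establishment of Corollary \ref{cor:pushH}, i.e. Theorem \ref{tw:pushH} together with the equivariant Grothendieck--Riemann--Roch passage from the $\KTh$-theoretic pushforward of Theorem \ref{tw:push}. Granting those inputs, the proof of Proposition \ref{tw:q1} is a purely formal consequence of Corollaries \ref{cor:q1} and \ref{cor:pushH}, so I expect no difficulty in this final step itself.
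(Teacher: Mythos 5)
Your proposal is correct and follows exactly the paper's own route: the paper derives Proposition \ref{tw:q1} by substituting the nonequivariant pushforward formula of Corollary \ref{cor:pushH} into the expansion of Corollary \ref{cor:q1}, using the remark that forgetting the torus action commutes with the relevant maps. Your extra care with the $m=0$ term ($\pi_*(1)=n+1=\Pcoh{0}{n+1}$) is consistent with the paper's conventions and with Corollary \ref{cor:m0}.
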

\begin{rem}
	An analogous formula may be obtained in the equivariant cohomology $\coh_\Tt^{2k}(\Hilb{n+1})$ by using Theorem \ref{tw:pushH} instead of Corollary \ref{cor:pushH}. It is slightly more complicated:
	$$
	\qq_1(c_k(\V_n))=\sum_{m=0}^k \sum_{u=0}^m (-1)^m a_{u,m} \cdot c_{k-m}(\V_{n+1})  \cdot \Pcoh{u}{n+1} \cdot t^{m-u} \in \coh^{2k}_{\Tt}(\Hilb{n+1})\,,
	$$
	where $a_{u,m}$ are the coefficients from Theorem \ref{tw:pushH}, i.e.
	$$ \sum_{u=0}^{m}a_{u,m}\cdot x^u=x^m(x+1)-(x-1)^mx \,.$$
\end{rem}
Chern classes are images of elementary symmetric polynomials under the Kirwan map. It turns out that the power sum basis is better suited for our computations. For a sequence $\lambda=(\lambda_1,\dots,\lambda_l)$ and a subset $A\subset\{1,\dots,l\}$, let $\lambda_A$ be the sequence obtained by removing indices corresponding to elements of $A$. We use the notation
$$l(A)=\sum_{i\in A}\lambda_i\,. $$
For the empty subset, we let $l(\varnothing)=0$. Let us recall that $\Pcoh{0}{n}=n$ and $\Pcoh{\varnothing}{n}=1$.
\begin{pro} \label{tw:NakP}
	Let $k\ge 0$. The following holds in the nonequivariant cohomology $\coh^{2k}(\Hilb{n+1})$.
	$$\qq_1(\Pcoh{k}{n})=(n-k)\cdot \Pcoh{k}{n+1}\,.$$
	More generally, let $\lambda=(\lambda_1,\dots,\lambda_l)$ be a sequence of non-negative integers. Then
	$$\qq_1(\Pcoh{\lambda}{n})=\sum_{A\subseteq\{1,\dots,l\}}(-1)^{|A|}(l(A)+1)\cdot\Pcoh{{\lambda_A}}{n+1}\cdot\Pcoh{l(A)}{n+1}\,.$$
\end{pro}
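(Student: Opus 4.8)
The plan is to express $\qq_1$ through the projection-formula identity $\qq_1(\alpha)=\pi_*p^*(\alpha)$ recorded after the definition of the Nakajima operators, and then to reduce everything to the pushforward computation of Corollary \ref{cor:pushH}. The single-summand formula comes out at once from Corollary \ref{cor:q1}: there $\qq_1(\Pcoh{k}{n})=\Pcoh{k}{n+1}\cdot\pi_*(1)-\pi_*(c_1(\Q_n)^k)$. Since $\pi$ is generically $(n+1)$-to-one, $\pi_*(1)=n+1$, which is precisely the $m=0$ instance of Corollary \ref{cor:pushH} (reading $\Pcoh{0}{n+1}=n+1$), while $\pi_*(c_1(\Q_n)^k)=(k+1)\Pcoh{k}{n+1}$. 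Subtracting gives $(n+1-k-1)\Pcoh{k}{n+1}=(n-k)\Pcoh{k}{n+1}$.

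For a general sequence $\lambda=(\lambda_1,\dots,\lambda_l)$ I would begin from $\qq_1(\Pcoh{\lambda}{n})=\pi_*\,p^*\Pcoh{\lambda}{n}$ and use that $p^*$ is a ring homomorphism to factor $p^*\Pcoh{\lambda}{n}=\prod_{i=1}^l p^*\Pcoh{\lambda_i}{n}$. The decisive input is the power-sum identity of Proposition \ref{pro:q1}, $p^*\Pcoh{k}{n}=\pi^*\Pcoh{k}{n+1}-c_1(\Q_n)^k$, which I substitute into each factor. Expanding the product $\prod_{i=1}^l\bigl(\pi^*\Pcoh{\lambda_i}{n+1}-c_1(\Q_n)^{\lambda_i}\bigr)$ by the distributive law, the resulting monomials are indexed by the subset $A\subseteq\{1,\dots,l\}$ of factors from which the term $-c_1(\Q_n)^{\lambda_i}$ is taken. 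The untouched factors assemble into $\pi^*\Pcoh{\lambda_A}{n+1}$, while the chosen exponents add to $l(A)=\sum_{i\in A}\lambda_i$ and contribute a sign $(-1)^{|A|}$, so that $p^*\Pcoh{\lambda}{n}=\sum_{A}(-1)^{|A|}\,\pi^*\Pcoh{\lambda_A}{n+1}\cdot c_1(\Q_n)^{l(A)}$.

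Applying $\pi_*$ and the projection formula $\pi_*(\pi^*\beta\cdot\gamma)=\beta\cdot\pi_*\gamma$ then pulls each $\Pcoh{\lambda_A}{n+1}$ outside the pushforward, leaving $\sum_{A}(-1)^{|A|}\Pcoh{\lambda_A}{n+1}\cdot\pi_*\bigl(c_1(\Q_n)^{l(A)}\bigr)$. Evaluating the remaining pushforward by Corollary \ref{cor:pushH}, $\pi_*(c_1(\Q_n)^{l(A)})=(l(A)+1)\Pcoh{l(A)}{n+1}$, produces exactly the claimed sum. The first formula is then the specialization $\lambda=(k)$, obtained by adding the $A=\varnothing$ term $(n+1)\Pcoh{k}{n+1}$ and the $A=\{1\}$ term $-(k+1)\Pcoh{k}{n+1}$.

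I do not anticipate a genuine obstacle; the computation is pure bookkeeping once Proposition \ref{pro:q1} and Corollary \ref{cor:pushH} are available. The only point demanding care is the exponent $l(A)=0$, which occurs for $A=\varnothing$ and whenever all $\lambda_i$ with $i\in A$ vanish: there I must invoke the $m=0$ case of Corollary \ref{cor:pushH}, the elementary identity $\pi_*(1)=\Pcoh{0}{n+1}=n+1$. I would also note that Proposition \ref{pro:q1} is valid at $k=0$, since $p^*\Pcoh{0}{n}=n$ agrees with $\pi^*\Pcoh{0}{n+1}-c_1(\Q_n)^0=(n+1)-1$, so the expansion remains uniform across all subsets $A$ even when $\lambda$ has zero entries.
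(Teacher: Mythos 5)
Your proposal is correct and follows essentially the same route as the paper: the single power-sum case via Corollary \ref{cor:q1} together with Corollary \ref{cor:pushH}, and the general case by expanding $p^*\Pcoh{\lambda}{n}=\prod_i\bigl(\pi^*\Pcoh{\lambda_i}{n+1}-c_1(\Q_n)^{\lambda_i}\bigr)$ over subsets $A$, then applying the projection formula and Corollary \ref{cor:pushH}. You merely spell out the bookkeeping (including the $l(A)=0$ and $k=0$ edge cases) that the paper leaves implicit.
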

\begin{proof}
	The first part follows from Corollaries \ref{cor:q1} and \ref{cor:pushH}. The second part uses the fact that
	$$p^*\Pcoh{\lambda}{n}=\sum_{A\subseteq\{1,\dots,l\}}(-1)^{|A|}\cdot\pi^*\Pcoh{\lambda_A}{n+1}\cdot c_1(\Q_n)^{l(A)} \in \coh^*(\Hilb{n,n+1})\,,$$
	which is a consequence of Proposition \ref{pro:q1}.
\end{proof}
Analogously, one may describe the action of $\qq^{\KTh}_{1,m}$ operator.
\begin{pro} \label{pro:qK}
	The following holds in the nonequivariant K-theory $\KTh(\Hilb{n+1})$.
	$$\qq^K_{1,0}(\PKT{k}{n})=	\PKT{k}{n+1}\cdot \PKT{-1}{n+1}-k\PKT{k}{n+1}+(k-1)\PKT{k-1}{n+1}\,,$$
	More generally
	$$\qq^K_{1,m}(\PKT{k}{n})=\PKT{k}{n+1}\cdot\big(m\PKT{m}{n+1}-(m-1)\PKT{m-1}{n+1}\big)-(k+m)\PKT{k+m}{n+1}+(k+m-1)\PKT{k+m-1}{n+1}\,.$$
\end{pro}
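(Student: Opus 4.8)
The plan is to reduce the statement to the nonequivariant pushforward formula of Corollary \ref{cor:pushnoneq} via the projection formula, mirroring the cohomological computation that led to Proposition \ref{tw:NakP}. By the remark following the definition of the $\KTh$-theoretic creation operators in Section \ref{s:Nakoperators}, we have
$$\qq^{\KTh}_{1,m}(\PKT{k}{n}) = \pi_*\big(p^*\PKT{k}{n}\otimes \Q_n^m\big) \in \KTh(\Hilb{n+1})\,.$$
So the first task is to express $p^*\PKT{k}{n}$ in terms of classes pulled back from $\Hilb{n+1}$ together with powers of the line bundle $\Q_n$, which is the $\KTh$-theoretic analogue of the identity $p^*\Pcoh{k}{n}=\pi^*\Pcoh{k}{n+1}-c_1(\Q_n)^k$ from Proposition \ref{pro:q1}.

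First I would observe that the power-sum class is an Adams operation applied to the tautological bundle, namely $\PKT{k}{n}=\psi^k([\V_n])$, which follows from Proposition \ref{pro:AdamsKirwan} together with $\PKT{1}{n}=[\V_n]$. Since Adams operations are natural with respect to pullbacks, are ring homomorphisms (hence additive), and raise a line bundle to its $k$-th power, applying $\psi^k$ to the $\KTh$-theoretic identity $[p^*\V_n]=[\pi^*\V_{n+1}]-[\Q_n]$ of Proposition \ref{pro:q1} yields
$$p^*\PKT{k}{n} = \pi^*\PKT{k}{n+1} - \Q_n^k \in \KTh(\Hilb{n,n+1})\,.$$
I expect this to be the only step requiring genuine care; once it is in place the remainder is mechanical.

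Next I would substitute this expression and apply the projection formula $\pi_*\big(\pi^*(\alpha)\otimes\beta\big)=\alpha\otimes\pi_*(\beta)$, obtaining
$$\qq^{\KTh}_{1,m}(\PKT{k}{n}) = \PKT{k}{n+1}\cdot\pi_*(\Q_n^m) - \pi_*(\Q_n^{k+m})\,.$$
Finally I would insert the formula $\pi_*[\Q^j]=j\,\PKT{j}{}-(j-1)\,\PKT{j-1}{}$ of Corollary \ref{cor:pushnoneq}, evaluated at $j=m$ and $j=k+m$, and collect terms; this produces exactly the claimed identity. The $m=0$ case follows either by direct specialization, using $\pi_*[\Q^0]=\PKT{-1}{}$ (equivalently $\pi_*[\O_{\Hilb{n,n+1}}]=[\V^*_{n+1}]$ from Corollary \ref{cor:m0}), or simply by setting $m=0$ in the general formula. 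The principal subtlety throughout is the justification of $p^*\PKT{k}{n}=\pi^*\PKT{k}{n+1}-\Q_n^k$ via the properties of Adams operations; notably, no localization or combinatorics of tangent weights is needed at this stage, since all the analytic work has already been absorbed into Corollary \ref{cor:pushnoneq}.
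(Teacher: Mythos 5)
Your proposal is correct and follows essentially the same route as the paper: the paper's proof likewise substitutes $p^*\PKT{k}{n}=\pi^*\PKT{k}{n+1}-\Q_n^{k}$ into $\pi_*\big(p^*\PKT{k}{n}\otimes\Q_n^m\big)$, applies the projection formula, and concludes by Corollary \ref{cor:pushnoneq}. The only difference is that you explicitly justify the key identity $p^*\PKT{k}{n}=\pi^*\PKT{k}{n+1}-\Q_n^{k}$ via Adams operations applied to Proposition \ref{pro:q1}, a step the paper leaves implicit; your justification is valid.
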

\begin{proof}
	We have
	\begin{align*}
		\qq^K_{1,m}(\PKT{k}{n})&=\pi_*\big((\pi^*\PKT{k}{n+1}-\Q_n^{k})\cdot \Q_n^m\big)\\
		&=\PKT{k}{n+1}\pi_*(\Q_n^m)-\pi_*(\Q_n^{k+m})\,.
	\end{align*}

	The result follows from Corollary \ref{cor:pushnoneq}.
\end{proof}
\begin{rem}
	An analogous formula in the equivariant K-theory $\KTh_\Tt(\Hilb{n+1})$ can be stated using Theorem \ref{tw:push} instead of Corollary \ref{cor:pushnoneq}.
\end{rem}
\begin{rem} \label{rem:Lehn}
	The relation between the characteristic classes of the tautological bundle and the Nakajima's operators is studied in \cite{Lehn}. There an arbitrary smooth irreducible surface~$X$ is considered. The author  considers operators on $\coh^{\ast}(\Hilb{n}(X))$ associated  with vector bundles on $X$ -- every bundle on $X$ canonically determines a vector bundle on $\Hilb{n}(X)$, which acts on the cohomology of $\Hilb{n}(X)$ by multiplication with its total Chern class.
	For the affine plane he studies the action of the tautological bundle on the symmetric functions, using Nakajima--Grojnowski identification
	$$\QQ[p_1,p_2,...] \simeq \bigoplus_{n\in\NN}\coh^{\ast}(\Hilb{n};\QQ)\,.$$
	The resulting operator is described in \cite[Theorem 4.10]{Lehn} and \cite[Theorem 4.1]{Lehn2} using different methods, see also \cite{Boissiere3} for a generalization to K-theory using McKay correspondence \cite{BKR}. In our paper we solve a dual problem. Under the Kirwan map the multiplication by the characteristic classes of the universal bundle are easy to describe and the Nakajima operators are complicated. Our equivariant formulas (e.g. Theorem~\ref{tw:pushH}) are independent from previous results.  
\end{rem}
\subsection{Higher operators}
The formulas for higher Nakajima's operators in the nonequivariant cohomology or $\Tt$-equivariant cohomology may be deduced from formulas for $\qq_1$. \\
The auxiliary operator
$$\rho:\coh_\TT^*(\Hilb{n})\to \coh_\TT^{*+1}(\Hilb{n+1})$$
is defined in \cite[Definition 33]{Evain}. Thanks to \cite[Corollary 30 and Theorem 34]{Evain} it satisfies
$$\rho(\mathcal{E})=(-1)\cdot\pi_*(c_1(\Q_n)\cdot p^*\mathcal{E})\,. $$
Therefore, a reasoning analogous to the one in the proof of Proposition \ref{tw:NakP} implies the following proposition.
\begin{pro} \label{pro:rho}
	In the nonequivariant cohomology $\coh^{\ast}(\Hilb{n+1})$ the following holds.
	$$\rho(\Pcoh{k}{n})=(k+2)\cdot \Pcoh{k+1}{n+1}-2\cdot \Pcoh{k}{n+1}\cdot\Pcoh{1}{n+1}\,.$$
	More generally, let $\lambda=(\lambda_1,\dots,\lambda_l)$ be a sequence of non-negative integers. Then
	$$\rho(\Pcoh{\lambda}{n})=\sum_{A\subseteq\{1,\dots,l\}}(-1)^{|A|+1}(l(A)+2)\cdot\Pcoh{{\lambda_A}}{n+1}\cdot\Pcoh{l(A)+1}{n+1}\,.$$
\end{pro}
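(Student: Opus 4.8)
The plan is to unwind the definition $\rho(\mathcal{E}) = -\pi_*(c_1(\Q_n) \cdot p^*\mathcal{E})$ and reduce everything to the pushforward formula of Corollary \ref{cor:pushH}, mirroring the proof of Proposition \ref{tw:NakP}. First I would record the multiplicative expansion of the pullback supplied by Proposition \ref{pro:q1}. Since $\Pcoh{\lambda}{n} = \prod_{i=1}^l \Pcoh{\lambda_i}{n}$ and $p^*\Pcoh{\lambda_i}{n} = \pi^*\Pcoh{\lambda_i}{n+1} - c_1(\Q_n)^{\lambda_i}$, expanding the product of the $l$ factors and indexing each resulting term by the subset $A \subseteq \{1,\dots,l\}$ of factors where the summand $-c_1(\Q_n)^{\lambda_i}$ is chosen gives
$$p^*\Pcoh{\lambda}{n} = \sum_{A \subseteq \{1,\dots,l\}} (-1)^{|A|} \cdot \pi^*\Pcoh{\lambda_A}{n+1} \cdot c_1(\Q_n)^{l(A)}\,,$$
which is exactly the identity already invoked in the proof of Proposition \ref{tw:NakP}.

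Next I would substitute this expansion into the definition of $\rho$, then pull the sign and the finite sum through the pushforward to obtain
$$\rho(\Pcoh{\lambda}{n}) = \sum_{A \subseteq \{1,\dots,l\}} (-1)^{|A|+1} \cdot \pi_*\big(\pi^*\Pcoh{\lambda_A}{n+1} \cdot c_1(\Q_n)^{l(A)+1}\big)\,.$$
The extra factor of $c_1(\Q_n)$ from the definition of $\rho$ is what raises the exponent from $l(A)$ to $l(A)+1$. By the projection formula the class $\pi^*\Pcoh{\lambda_A}{n+1}$ leaves the pushforward as $\Pcoh{\lambda_A}{n+1}$, and Corollary \ref{cor:pushH} with $m = l(A)+1$ evaluates $\pi_*\big(c_1(\Q_n)^{l(A)+1}\big)$ to $(l(A)+2)\cdot\Pcoh{l(A)+1}{n+1}$. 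Assembling these yields the general formula. For the single-variable case $\lambda = (k)$ I would then simply list the two subsets of $\{1\}$: the subset $\varnothing$ (where $\lambda_\varnothing = (k)$ and $l(\varnothing)=0$) contributes $-2\Pcoh{k}{n+1}\Pcoh{1}{n+1}$, and the subset $\{1\}$ (where $\lambda_{\{1\}}=\varnothing$ so $\Pcoh{\varnothing}{n+1}=1$, and $l(\{1\})=k$) contributes $(k+2)\Pcoh{k+1}{n+1}$.

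Since Corollary \ref{cor:pushH} already packages the genuine geometric input, there is no real obstacle here: the argument is purely formal. The only points requiring care are the sign bookkeeping and the boundary conventions $\Pcoh{\varnothing}{n+1}=1$ and $l(\varnothing)=0$, which ensure that the degenerate subsets $A=\varnothing$ and $A=\{1,\dots,l\}$ are treated consistently. With these in place, the shift $m \mapsto m+1$ induced by multiplication with $c_1(\Q_n)$ in the definition of $\rho$ propagates through the expansion to produce the factor $l(A)+2$ together with the overall sign $(-1)^{|A|+1}$.
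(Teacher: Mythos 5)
Your proof is correct and is essentially the paper's own argument: the paper only says that Proposition \ref{pro:rho} follows by ``a reasoning analogous to the one in the proof of Proposition \ref{tw:NakP}'', and your write-up is precisely that analogue — expand $p^*\Pcoh{\lambda}{n}$ over subsets $A$ via Proposition \ref{pro:q1}, let the extra factor of $c_1(\Q_n)$ from the definition of $\rho$ raise the exponent to $l(A)+1$, and evaluate with the projection formula and Corollary \ref{cor:pushH} to get the coefficient $(l(A)+2)$ and sign $(-1)^{|A|+1}$. The sign bookkeeping and the conventions $\Pcoh{\varnothing}{n+1}=1$, $l(\varnothing)=0$ are handled correctly, so there is no gap.
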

To compute the higher operators we use the following inductive result.
\begin{atw}[{\cite[Proposition 3.12]{Lehn}, \cite[Theorem 34]{Evain}}] \label{tw:Evain} 
	For $i \ge 2$, we have
	$$\qq_i=\frac{\rho\circ \qq_{i-1}- \qq_{i-1}\circ \rho}{i-1} \,.$$
	This result is valid also in the equivariant setting.
\end{atw}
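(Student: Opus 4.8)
The plan is to realize all three operators as correspondences and to compute the commutator $[\rho,\qq_{i-1}]$ by convolution. Recall that $\qq_{i-1}$ is defined by the incidence variety $Q_{i-1}^n\subset\Hilb{n}\times\Hilb{n+i-1}$, and that by the formula preceding the statement $\rho(\mathcal{E})=-\pi_*(c_1(\Q_n)\cdot p^*\mathcal{E})$ is the operator attached to the correspondence $\Hilb{n,n+1}=Q_1^n$ decorated with the class $c_1(\Q_n)$. Both compositions $\rho\circ\qq_{i-1}$ and $\qq_{i-1}\circ\rho$ are then convolutions of these two correspondences, landing in $\Hilb{n}\times\Hilb{n+i}$: the first convolves over the intermediate space $\Hilb{n+i-1}$, the second over $\Hilb{n+1}$. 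First I would write down these two convolution cycles explicitly, carrying along the Chern class that comes from the $\rho$-factor.

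The key geometric observation is that the support of each convolution splits according to whether the length-$(i-1)$ subscheme added by $\qq_{i-1}$ and the single point added by $\rho$ are supported at distinct points of $\Aa^2_\CC$ or collide. On the open locus where the two loci are disjoint the order of the two operations is irrelevant: the two convolution cycles are canonically identified, together with their $c_1(\Q)$ decorations, and hence cancel in the commutator $\rho\circ\qq_{i-1}-\qq_{i-1}\circ\rho$. The commutator is therefore supported on the diagonal locus where the added point merges with the length-$(i-1)$ subscheme into a subscheme of length $i$ concentrated at a single point, which is precisely the support of the cycle $[Q_i^n]$ defining $\qq_i$.

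It then remains to identify the surviving cycle, with its multiplicity, as $(i-1)[Q_i^n]$. This is where the decoration $c_1(\Q_n)$ is essential: after restriction to the collision locus one pushes $c_1(\Q)$ forward along the map recording the merging, and this pushforward produces the numerical factor $i-1$. I would carry this out by a Gysin / excess-intersection argument, isolating the normal contribution of the diagonal and evaluating the integral of $c_1(\Q)$ over the fibres of the punctual Hilbert scheme parametrising the collision, whose combinatorial weight matches the length being increased.

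The main obstacle is exactly this last step. The varieties $Q_i^n$ are singular for $i\ge 2$ and the merging map is not flat, so controlling the excess intersection and the obstruction contribution along the collision locus is delicate; this is the technical heart of the proofs in \cite{Lehn,Evain}. An alternative route, better adapted to the equivariant setting of this paper, is to verify the identity by localisation at the $\TT$-fixed points: using the tangent-weight combinatorics of Section \ref{s:nested} one reduces the commutator relation to a combinatorial identity for sums over chains of box additions. This trades the geometric excess-intersection computation for an equally delicate check that the collision multiplicities reproduce the factor $i-1$.
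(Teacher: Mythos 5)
The paper does not prove this statement at all: Theorem \ref{tw:Evain} is imported verbatim from \cite[Proposition 3.12]{Lehn} and \cite[Theorem 34]{Evain}, so there is no in-paper argument to compare yours against. Judged on its own terms, your proposal is an outline of the standard correspondence-theoretic strategy (which is indeed the shape of the arguments in the cited references): realize $\rho$ and $\qq_{i-1}$ as decorated correspondences, convolve, observe that the two compositions agree on the locus where the added length-one and length-$(i-1)$ subschemes have disjoint support, and conclude that the commutator is supported on the collision locus. That part of the sketch is sound. One caveat: in this paper the identity $\rho(\mathcal{E})=-\pi_*(c_1(\Q_n)\cdot p^*\mathcal{E})$ is itself a quoted consequence of \cite[Corollary 30 and Theorem 34]{Evain} rather than the definition of $\rho$, so taking it as your starting point is circular unless you adopt it as the definition, which is legitimate but should be said explicitly.

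The genuine gap is the step you yourself flag as the ``technical heart.'' Two things are missing there, not one. First, you need a dimension count showing that the commutator, a priori a Borel--Moore class supported on the collision locus, can only be a rational multiple of the fundamental class $[Q_i^n]$; this uses the irreducibility of $Q_i^n$ and the fact that the collision locus has no components of excess dimension, and it is not automatic. Second, you must actually compute that multiple and get $i-1$ (with the correct sign); this is where the $c_1(\Q)$ decoration enters, via an intersection-theoretic computation on the punctual locus that neither your excess-intersection sketch nor your proposed localisation alternative carries out. Deferring exactly this step to \cite{Lehn,Evain} means the proposal reproduces the citation rather than the proof. Since the paper also only cites the result, this is not a defect relative to the paper --- but it is not a proof.
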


Our formulas (Propositions \ref{tw:NakP} and \ref{pro:rho}) allow for an inductive computation of the Nakajima's operators in terms of the Kirwan map. They yield a formula for the image of an arbitrary symmetric polynomial written in the power-sum basis.

\begin{atw} \label{tw:qm}
	Let $\lambda=(\lambda_1,\dots,\lambda_l)$ be a sequence of nonnegative integers and $m$ a positive integer. Then
	\begin{align*}
		\qq_m(\Pcoh{\lambda}{n})=(-1)^{m+1}\cdot\sum_{A\subseteq\{1,\dots,l\}}(-1)^{|A|}m^{|A|}\cdot(l(A)+m)\cdot\Pcoh{{\lambda_A}}{n+m}\cdot\Pcoh{l(A)+m-1}{n+m}\,.
	\end{align*}
\end{atw}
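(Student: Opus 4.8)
The plan is to argue by induction on $m$, using the recursion of Theorem \ref{tw:Evain} together with the explicit formulas for $\qq_1$ (Proposition \ref{tw:NakP}) and for $\rho$ (Proposition \ref{pro:rho}). The base case $m=1$ is exactly Proposition \ref{tw:NakP}. For the inductive step I assume the formula for $\qq_{m-1}$ and compute $\qq_m=\tfrac{1}{m-1}(\rho\circ\qq_{m-1}-\qq_{m-1}\circ\rho)$.

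The crucial structural observation is that both $\qq_{m-1}$ and $\rho$ send a power-sum product $\Pcoh{\lambda}{n}$ to a sum, indexed by subsets $A\subseteq\{1,\dots,l\}$, of terms of the shape $\Pcoh{\lambda_A}{}\cdot\Pcoh{\ast}{}$: the factors indexed by $A$ are fused into one new power-sum of degree $l(A)+c$ (with $c=m-2$ for $\qq_{m-1}$ and $c=1$ for $\rho$), times an explicit coefficient. Hence each composition becomes a double sum: first an inner subset $A$ fusing some factors and creating one new factor, then an outer subset that may or may not include this new factor. I would split both compositions accordingly into terms where the outer subset avoids the freshly created factor (call these type~I) and terms where it contains it (type~II). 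Type~I terms retain two separate new power-sum factors $\Pcoh{l(A)+m-2}{}\cdot\Pcoh{l(B')+1}{}$, whereas in type~II terms the outer operator reabsorbs the inner factor and produces a single new power-sum; only the latter match the shape of the target formula.

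Next I would show that the type~I contributions of $\rho\circ\qq_{m-1}$ and of $\qq_{m-1}\circ\rho$ are identical and hence cancel in the difference. Both are sums over ordered pairs of disjoint subsets $(A,B')$ of $\{1,\dots,l\}$; swapping the two subsets (so that the set fused by $\qq_{m-1}$ and the set fused by $\rho$ are matched) gives a bijection under which the surviving subsequence $\lambda_{A\cup B'}$, the two new factors $\Pcoh{l(A)+m-2}{}$ and $\Pcoh{l(B')+1}{}$, and the product of coefficients all agree. This is a purely formal matching, so the type~I sums coincide and drop out. After this cancellation only type~II terms survive, and collecting them by the fused set $C=A\sqcup B'$ (summing over all ordered splittings of $C$) reduces everything, for fixed $C$ of size $r$, to the scalar identity
\[
\sum_{A\subseteq C}(m-1)^{|A|}\bigl(l(A)+m-1\bigr)\;-\;\sum_{A\subseteq C}(m-1)^{r-|A|+1}\bigl(l(A)+2\bigr)=-(m-1)\,m^{r}\,.
\]
Proving this identity is the last step and the only genuinely computational one: using $\sum_{A\subseteq C}(m-1)^{|A|}=m^{r}$ and $\sum_{A\subseteq C}(m-1)^{|A|}l(A)=(m-1)m^{r-1}l(C)$ (the latter by summing the contribution of each index separately), together with the complementation substitution $A\mapsto C\setminus A$ in the second sum, both sides evaluate to $-(m-1)m^{r}$. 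Dividing the surviving type~II sum by $m-1$ then yields precisely the coefficient $(-1)^{m+1}(-1)^{|C|}m^{|C|}(l(C)+m)$ of $\Pcoh{\lambda_C}{n+m}\cdot\Pcoh{l(C)+m-1}{n+m}$, which is the claimed formula.

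The main obstacle I anticipate is the bookkeeping of the nested sums — in particular setting up the type~I/type~II splitting so that the degrees of the new power-sums and the accompanying signs and powers of $m-1$ are tracked correctly through both compositions; once this is organized, the type~I cancellation is a clean reindexing and the remaining type~II identity is elementary. One should also keep in mind the convention $\Pcoh{0}{n}=n$, so that subsequences $\lambda_A$ with zero entries and fused factors of degree $0$ are handled consistently with the cited propositions; this does not affect the coefficient computation above.
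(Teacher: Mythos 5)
Your proposal is correct and follows essentially the same route as the paper's proof: induction on $m$ via the recursion $\qq_m=\tfrac{1}{m-1}(\rho\circ\qq_{m-1}-\qq_{m-1}\circ\rho)$, splitting each composed double sum according to whether the outer operator reabsorbs the freshly created power-sum factor, observing that the non-reabsorbing ("type I") contributions of the two compositions coincide and cancel, and reducing the surviving terms, grouped by $C=A\sqcup B$, to the scalar identity you state (which is the paper's Lemma \ref{lem:qm} plus the elementary sum $\sum_{A\subseteq C}(m-1)^{|A|}=m^{|C|}$). The only cosmetic difference is that the paper runs the induction from $m$ to $m+1$ rather than from $m-1$ to $m$.
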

\begin{cor} \label{cor:qm}
	let $k\ge 0$ be a nonegative integer and $m>0$ a positive integer. Then
	\begin{align*}
		\qq_m(\Pcoh{k}{n})=(-1)^{m+1}\cdot\left(m\cdot\Pcoh{k}{n+m}\cdot\Pcoh{m-1}{n+m}-m(m+k)\cdot\Pcoh{k+m-1}{n+m}\right)\,.
	\end{align*}
\end{cor}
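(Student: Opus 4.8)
The plan is to induct on $m$, using the commutator recursion $\qq_m=\frac{1}{m-1}\bigl(\rho\circ\qq_{m-1}-\qq_{m-1}\circ\rho\bigr)$ of Theorem \ref{tw:Evain}. The base case $m=1$ is exactly Proposition \ref{tw:NakP}. For the inductive step I would substitute the inductive formula for $\qq_{m-1}$ together with the explicit formula for $\rho$ from Proposition \ref{pro:rho} into the commutator, and then reorganize the resulting double sum into the claimed single sum over subsets $A\subseteq\{1,\dots,l\}$.

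The key structural feature is that both $\qq_{m-1}$ and $\rho$ send a product of power sums $\Pcoh{\lambda}{n}$ to a sum of products, each carrying exactly one \emph{new} power-sum factor. Hence each of the two compositions becomes a double sum: an outer index choosing a subset of the parts of $\lambda$, and an inner index choosing a subset of the parts of the intermediate sequence (which is $\lambda_A$ with an extra part appended). The decisive bookkeeping step is to split every term according to whether the second-applied operator's subset contains the newly created extra part. This writes $\rho\circ\qq_{m-1}(\Pcoh{\lambda}{n})$ as a sum of a piece where the extra part survives (carrying a product of \emph{two} new power sums) and a piece where it is reabsorbed (carrying only \emph{one}), and similarly for $\qq_{m-1}\circ\rho$.

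The first thing to observe is that the two ``extra part survives'' pieces are the only terms involving a product of two new power sums, and after swapping the roles of the inner and outer subsets their coefficients coincide; hence they cancel in the commutator. The surviving terms, collected by their final merged index set $S=A\sqcup B$, all take the form $\Pcoh{\lambda_S}{n+m}\cdot\Pcoh{l(S)+m-1}{n+m}$ times a scalar, so it remains only to verify the scalar identity
\[
\frac{1}{m-1}\sum_{A\subseteq S}(m-1)^{|A|}\bigl(m\,l(A)-(m-1)(l(S)+1)\bigr)=-m^{|S|}\,,
\]
which follows at once from $\sum_{A\subseteq S}(m-1)^{|A|}=m^{|S|}$ and $\sum_{A\subseteq S}(m-1)^{|A|}l(A)=(m-1)\,m^{|S|-1}\,l(S)$. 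Matching this against the claimed coefficient $(-1)^{m+1}(-m)^{|S|}(l(S)+m)$ closes the induction, and Corollary \ref{cor:qm} is the specialization $\lambda=(k)$.

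I expect the main obstacle to be the precise accounting of signs and, above all, the division by $m-1$: one must check that the commutator is exactly what forces the two-new-factor terms to cancel, and that the surviving scalar is genuinely divisible by $m-1$ with quotient $-m^{|S|}$. Once the cancellation and this elementary subset-sum identity are in place, the rest is routine manipulation of the Kirwan images $\Pcoh{\cdot}{n+m}$.
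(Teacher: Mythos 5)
Your proposal reproduces the paper's own proof of Theorem \ref{tw:qm} essentially verbatim: induction on $m$ via the commutator recursion of Theorem \ref{tw:Evain} with base case Proposition \ref{tw:NakP}, splitting each composition according to whether the newly created part lies in the second operator's subset, cancelling the two identical ``two new factors'' pieces, and reducing the survivors to the subset-sum identity of Lemma \ref{lem:qm}; the corollary is then the specialization $\lambda=(k)$. The approach and all key steps match the paper's argument.
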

\begin{rem}
	There are two reasons why the power sum basis is better suited for our computations than the elementary symmetric polynomial basis. First, power sums of Chern roots behave more simple with respect to the short exact sequences, see Proposition \ref{pro:q1}. Moreover, the power sum polynomials behave nicely under the Adams operations, see Proposition \ref{pro:AdamsKirwan}.
\end{rem}

To make the proof more readable we omit superscript in the notation of power sum elements $\Pcoh{}{}$. We use the notation $[l]$ for the set $\{1,\dots,l\}$. We write $\lambda_A,\, k$ for a sequence $\lambda_A$ with one added element $k$, i.e.
$$\Pcoh{{\lambda_A},\, k}{}:=\Pcoh{{\lambda_A}}{}\cdot\Pcoh{k}{} \,.$$
We need the following lemma. 
\begin{lemma} \label{lem:qm}
	Let $\lambda=(\lambda_1,\dots,\lambda_l)$ be a sequence of non-negative integers and $C\subseteq [l]$ a subset. For an arbitrary positive number $m$ we have
	$$\sum_{A\subseteq C}\left(m^{|A|}\cdot l(A)\right)=\sum_{A\subseteq C}\left(m^{|A|+1} \cdot l(C\setminus A)\right)=l(C)\cdot m(1+m)^{|C|-1}\,.$$
\end{lemma}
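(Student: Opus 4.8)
The plan is to prove that both sums on the left-hand side equal the common value $l(C)\cdot m(1+m)^{|C|-1}$ by interchanging the order of summation; then the first equality follows for free. Setting $n:=|C|$, the only input needed is the binomial identity $\sum_{A\subseteq S}m^{|A|}=(1+m)^{|S|}$ valid for any finite set $S$, together with its two refinements obtained by fixing whether a distinguished index lies in $A$ or not. I would expand $l(A)$ (resp.\ $l(C\setminus A)$) as a sum of the $\lambda_i$ and push these sums outside.

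For the first sum I would write $l(A)=\sum_{i\in A}\lambda_i$ and swap the two summations to obtain
\[\sum_{A\subseteq C}m^{|A|}\,l(A)=\sum_{i\in C}\lambda_i\sum_{\substack{A\subseteq C\\ i\in A}}m^{|A|}\,.\]
For a fixed $i\in C$, every subset $A$ containing $i$ has the form $\{i\}\cup A'$ with $A'\subseteq C\setminus\{i\}$, so the inner sum equals $m\sum_{A'\subseteq C\setminus\{i\}}m^{|A'|}=m(1+m)^{n-1}$; summing over $i\in C$ gives $l(C)\cdot m(1+m)^{n-1}$. For the middle sum I would factor out one power of $m$, expand $l(C\setminus A)=\sum_{i\in C,\,i\notin A}\lambda_i$, and swap again:
\[\sum_{A\subseteq C}m^{|A|+1}\,l(C\setminus A)=m\sum_{i\in C}\lambda_i\sum_{\substack{A\subseteq C\\ i\notin A}}m^{|A|}\,.\]
This time, for fixed $i$, the inner sum ranges over all subsets of $C\setminus\{i\}$, hence equals $(1+m)^{n-1}$, and summing over $i\in C$ yields $m\cdot l(C)(1+m)^{n-1}$, the same value.

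The argument is a routine double-counting, so I do not expect a substantial obstacle; the only points deserving a word of care are the two refinements of the binomial identity (subsets containing $i$ contribute a factor $m(1+m)^{n-1}$, those avoiding $i$ a factor $(1+m)^{n-1}$) and the degenerate case $C=\varnothing$. In that case both left-hand sums collapse to their single $A=\varnothing$ term $l(\varnothing)=0$, while the right-hand side vanishes because of the prefactor $l(C)$; here the hypothesis that $m$ is positive guarantees $1+m\neq0$ so that $(1+m)^{|C|-1}$ is well defined. This identity is precisely what is needed in the inductive proof of Theorem \ref{tw:qm} to collapse the double sum over subsets into a single power-sum expression.
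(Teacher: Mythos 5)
Your proof is correct and follows essentially the same route as the paper's: both interchange the order of summation, reduce each inner sum to the binomial identity $\sum_{A'\subseteq C\setminus\{i\}}m^{|A'|}=(1+m)^{|C|-1}$ (with an extra factor of $m$ when $i$ is required to lie in $A$), and conclude that both sums equal $l(C)\cdot m(1+m)^{|C|-1}$. Your explicit treatment of the degenerate case $C=\varnothing$ is a small addition the paper omits, but the argument is the same.
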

\begin{proof}
	we have
	\begin{multline*}
		\sum_{A\subseteq C}m^{|A|}\cdot l(A)=
		\sum_{A\subseteq C} \Big(m^{|A|}\cdot\sum_{i\in A} \lambda_i\Big)
		=\sum_{i\in C} \Big(\lambda_i\cdot\sum_{i\in A\subseteq C} m^{|A|}\Big)=\\
		=\Big(\sum_{i\in C} \lambda_i\Big)\cdot\Big(\sum_{A'\subseteq C\setminus \{*\}} m^{|A'|+1}\Big) 
		= l(C)\cdot m\cdot (1+m)^{|C|-1}\,.
	\end{multline*}
	On the other hand
	\begin{multline*}
		\sum_{A\subseteq C}m^{|A|+1}\cdot l(C\setminus A)=
		\sum_{A\subseteq C} \Big(m^{|A|}\cdot\sum_{i\notin A} \lambda_i\Big)
		=\sum_{i\in C} \Big(\lambda_i\cdot\sum_{i\notin A\subseteq C} m^{|A|+1}\Big)=\\
		=\Big(\sum_{i\in C} \lambda_i\Big)\cdot\Big(\sum_{A'\subseteq C\setminus \{*\}} m^{|A'|+1}\Big) 
		= l(C)\cdot m\cdot (1+m)^{|C|-1}\,. \qedhere
	\end{multline*}
\end{proof}
\begin{proof}[Proof of Theorem \ref{tw:qm}]
	We proceed by induction on $m$. For $m=1$ theorem simplifies to Proposition \ref{tw:q1}. Suppose that the theorem holds for $m$. We will prove that it holds also form $m+1$. \\
	We want to use Theorem \ref{tw:Evain}. First, let us consider the summand $\rho\circ \qq_{m}(\Pcoh{\lambda}{})$. The inductive assumption implies that it is equal to
	\begin{align} \label{eq:qk1}
		(-1)^{m+1}\cdot\sum_{A\subseteq[l]}(-1)^{|A|}m^{|A|}(l(A)+m)\cdot\rho\big(\Pcoh{{\lambda_A,\, l(A)+m-1}}{}\big)\,.
	\end{align}
	We use Proposition \ref{pro:rho} to compute summands $\rho(\Pcoh{{\lambda_A},\, l(A)+m-1}{})$. It yields a sum indexed by subsets of the set
	$$([l]\setminus A)\cup \{\infty\}\,,$$
	where $\{\infty\}$ corresponds to factor $\Pcoh{l(A)+m-1}{}$.
	Every such subset $\tilde{B}$ is uniquely determined by a subset $B\subseteq[l]$ such that $A\cap B=\varnothing$ and information whether the additional point $\{\infty\}$ belongs to $\tilde{B}$.
	Therefore, the sum computing $\rho(\Pcoh{{\lambda_A},l(A)+m-1}{})$ may be split into two sums indexed by  the set
	$$\{B\subseteq [l]|\, A\cap B=\varnothing\}\,. $$
	We substitute Proposition \ref{pro:rho} to equation \eqref{eq:qk1} and perform the mentioned splitting. The sum corresponding to $\infty \notin \tilde{B}$ is of the form
	\begin{align} \label{eq:qk2}
		(-1)^{m+1}\cdot\sum_{\substack{A,B\subseteq [l],\\ A\cap B=\varnothing}}
		(-1)^{|A|+|B|+1}m^{|A|}(l(A)+m)(l(B)+2)\cdot
		\Pcoh{{\lambda_{A\cup B},\, l(B)+2,\, l(A)+m}}{}\,.
	\end{align}
	The second sum, corresponding to $\infty \in \tilde{B}$ is equal to
	\begin{align}\label{eq:qk3}
		(-1)^{m+1}\cdot\sum_{\substack{A,B\subseteq [l] \\ A\cap B=\varnothing}}
		(-1)^{|A|+|B|+2}m^{|A|}(l(A)+m)(l(A)+l(B)+m+1)\cdot
		\Pcoh{{\lambda_{A\cup B},\, l(A)+l(B)+m}}{}\,.
 \end{align}
	We have $$\rho\circ \qq_{m}(\Pcoh{\lambda}{})=\eqref{eq:qk2}+\eqref{eq:qk3}\,.$$
	We apply analogous procedure to the expression $\qq_m\circ \rho(\Pcoh{\lambda}{})$. By Proposition \ref{pro:rho} it is equal to
	$$\sum_{B\subseteq [l]}(-1)^{|B|+1}(l(B)+2)\cdot
	\qq_m\big(\Pcoh{{\lambda_B,\, l(B)+1}}{}\big)\,.$$
	We use inductive assumption and split the obtained sum into two parts. The one corresponding to subsets not containing the additional point is equal to
	\begin{align} \label{eq:qk4}
		(-1)^{m+1}\sum_{\substack{A,B\subseteq [l],\\ A\cap B=\varnothing}}
		(-1)^{|A|+|B|+1}m^{|A|}(l(A)+m)(l(B)+2)\cdot
		\Pcoh{{\lambda_{A\cup B},\, l(B)+2,\, l(A)+m}}{}\,.
	\end{align}
	The other one is equal to
	\begin{align} \label{eq:qk5}
		(-1)^{m+1}\cdot\sum_{\substack{A,B\subseteq [l], \\ A\cap B=\varnothing}}
		(-1)^{|A|+|B|+2}m^{|A|+1}(l(B)+2)(l(A)+l(B)+m+1)\cdot
		\Pcoh{{\lambda_{A\cup B},\, l(A)+l(B)+m}}{}\,.
	\end{align}
	We have
	$$\rho\circ \qq_{m}(\Pcoh{\lambda}{})=\eqref{eq:qk4}+\eqref{eq:qk5}\,.$$
	The summands $\eqref{eq:qk4}$ and $\eqref{eq:qk2}$ are identical. By Theorem \ref{tw:Evain} we need to compute 
	\begin{align} \label{eq:qk6}
		m\cdot\qq_{m+1}(\Pcoh{\lambda}{})=\rho\circ \qq_{m}(\Pcoh{\lambda}{})- \rho\circ \qq_{m}(\Pcoh{\lambda}{})=\eqref{eq:qk3}-\eqref{eq:qk5}\,.
	\end{align}
	Grouping terms with the same $A\cup B$ we obtain that it is equal to
	$$
	(-1)^{m+1}\sum_{C\subseteq [l]}\Big(
	(-1)^{|C|}(l(C)+m+1)
	\Pcoh{{\lambda_{C},\, l(C)+m}}{}
	\sum_{A\subseteq C}
	\big(m^{|A|}(l(A)+m)-m^{|A|+1}(l(B)+2)\big)\Big)
	$$
	where $B=C\setminus A$. Lemma \ref{lem:qm} implies that
	$$\sum_{\substack{A\subseteq C,\\ B=C\setminus A}}
	\left(m^{|A|}l(A)-m^{|A|+1}l(B)\right)=0\,.$$
	On the other hand
	$$\sum_{A\subseteq C}
	\left(m^{|A|+1}-2m^{|A|+1}\right)=
	(-m)\cdot\sum_{A\subseteq C}m^{|A|}=(-m)(1+m)^{|C|}\,.$$
	The theorem follows from formula \eqref{eq:qk6}.
\end{proof}

\appendix
\section{Combinatorics of rational functions}

Let $\lambda$ be a partition of $n$ and let $(k,l)$ be its corner.  We introduced the notation (cf. Section \ref{s:pushK})
\[r_{\lambda, (k,l)}:= \frac{\eu(T_\lambda\H_{n+1})}{\eu(T_{\lambda,(k,l)}\H_{n,n+1})} \in \ZZ(q,t)\,. \]
Both Euler classes in the expression above are easy to compute - they are products of the form $\prod_{(i,j)} (1-q^{-i} t^{-j})$, where $(i,j)$ are tangent weights. Recall from Section \ref{prel:hilb} that most of the tangent weights at the fixed point $\lambda$ are identical in $\Hilb{n+1}$ and in $\Hilb{n, n+1}$, so factors corresponding to identical weights cancel out. What is left in the numerator are the tangent weights at $\Hilb{n+1}$ which are not tangent weights in the nested Hilbert scheme - each of them corresponds to a box below or to the left of the corner $(k,l)$. For boxes below the corner, the numerator weights are of the form $(-a, b+1)$ and the denominator weights of the form $(-a,b)$, while for the boxes to the left the numerator weights are $(a+1,-b)$ and the denominator weights are $(a,-b)$.
Hence $r_{\lambda, (k,l)}$ decomposes into the following product:

\[ r_{\lambda, (k,l)}= \prod_{d=0}^{l-1} \frac{1- q^{a_d}t^{-b_d-1} }{1-q^{a_d}t^{-b_d}}  \prod_{s=0}^{k-1}  \frac{1- q^{-a_s-1}t^{b_s} }{1-q^{-a_s}t^{b_s}} = \prod_{d=0}^{l-1} \frac{1}{t} \cdot \frac{q^{a_d}- t^{b_d+1}}{q^{a_d}- t^{b_d}} \prod_{s=0}^{k-1} \frac{1}{q} \cdot  \frac{q^{a_s+1}-t^{b_s}}{q^{a_s}-t^{b_s}}\,. \]
where $a_d=a_{\lambda, (k,d)}$, $b_d=b_{\lambda, (k,d)}$, $a_s=a_{\lambda, (s,l)}$, $b_s=b_{\lambda, (s,l)}$. Factoring out the $\frac{1}{q}, \frac{1}{t}$ terms one gets
\[ r_{\lambda, (k,l)}= \frac{1}{q^k t^l}  \prod_{d=0}^{l-1} \frac{q^{a_d}-t^{b_d+1}}{q^{a_d}-t^{b_d}} \prod_{s=0}^{k-1} \frac{q^{a_s+1}-t^{b_s}}{q^{a_s}-t^{b_s}}\,.\]
\begin{rem}
	In the case $k=0$ or $l=0$ we use convention $\prod_{s=0}^{-1}(\dots)=1\,.$
\end{rem}
Let us introduce two functions, corresponding to the two types of products which appear in the expression for $r_{\lambda, (k,l)}$.

\begin{adf}
	For a pair of non-negative integers $(a,b)$, we define rational functions
	\begin{align*}
		W_{a,b}(q,t)&:=  \frac{q^{a+1}-t^b}{q^a-t^b}\,,&
		U_{a,b}(q,t)&:=  \frac{q^{a}-t^{b+1}}{q^a-t^b}\,.&
	\end{align*}
Given a nonempty partition $\lambda$ and a box $(k,l)$ in the Young diagram of $\lambda$, we let
 \begin{align*}
		 	W_{\lambda,(k,l)}(q,t):= & W_{a_{\lambda,(k,l)},b_{\lambda,(k,l)}}(q,t)\,,&
		 	U_{\lambda,(k,l)}(q,t):= & U_{a_{\lambda,(k,l)},b_{\lambda,(k,l)}}(q,t)\,.&
		\end{align*}

\end{adf}
Let us note some basic properties of these functions, all of which are checked by an easy computation.
\begin{pro} \label{pro:lim}
	\begin{enumerate}
		\item We have
		$$W_{a,b}(q,t) =U_{b,a}(t,q)\,. $$
		\item For $a \neq 0$, we have
		\begin{align*}
			\lim_{q \to 0} W_{a,b}(q,t) &=1\,,&
			\lim_{q \to 0} U_{a,b}(q,t) &=t\,,&
			\lim_{q \to \infty} U_{a,b}(q,t) &=1\,.
		\end{align*}
		\item We have
		\begin{align*}
			W_{a,b}(q,t)&=qW_{a,b}(q^{-1},t^{-1})\,,&
			U_{a,b}(q,t)&=tU_{a,b}(q^{-1},t^{-1})\,.
		\end{align*}
	\end{enumerate}
\end{pro}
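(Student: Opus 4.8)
The plan is to verify each of the three assertions directly from the definitions
$$W_{a,b}(q,t) = \frac{q^{a+1}-t^b}{q^a - t^b}\,, \qquad U_{a,b}(q,t) = \frac{q^a - t^{b+1}}{q^a - t^b}\,,$$
since every statement reduces to an elementary manipulation of a single rational function in $q$ and $t$. There is no conceptual ingredient here; the whole content is formal.

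For part (1) I would perform the substitution prescribed by $U_{b,a}(t,q)$, that is, swap the roles of $a$ and $b$ and of $q$ and $t$ in the definition of $U$, obtaining
$$U_{b,a}(t,q) = \frac{t^b - q^{a+1}}{t^b - q^a}\,.$$
Multiplying numerator and denominator by $-1$ identifies this with $\tfrac{q^{a+1}-t^b}{q^a-t^b} = W_{a,b}(q,t)$. This requires nothing beyond careful bookkeeping of the substitution and one sign reversal.

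For part (2), assuming $a \geq 1$, I would read the limits off the monomials of lowest and highest degree in $q$. As $q \to 0$ the terms $q^{a+1}$ and $q^{a}$ both vanish, so $W_{a,b} \to \tfrac{-t^b}{-t^b} = 1$ and $U_{a,b} \to \tfrac{-t^{b+1}}{-t^b} = t$; as $q \to \infty$ the leading term $q^{a}$ dominates both numerator and denominator of $U_{a,b}$, yielding the limit $1$. The hypothesis $a \neq 0$ is precisely what guarantees that $q^{a}$ genuinely tends to $0$ (resp. $\infty$) rather than degenerating to the constant $1$, which is why it must be imposed. For part (3) I would clear the negative exponents: multiplying the numerator and denominator of $W_{a,b}(q^{-1},t^{-1})$ by $q^{a+1}t^{b}$ gives $\tfrac{t^b - q^{a+1}}{q(t^b - q^a)}$, so that the external factor $q$ cancels the $q$ in the denominator and, after a sign change in both binomials, reproduces $W_{a,b}(q,t)$; the identity for $U$ follows in the same manner by multiplying through by $q^{a}t^{b+1}$ and extracting the factor $t$.

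I do not expect any of these steps to present a genuine obstacle, as the proposition is purely computational. The only point demanding mild attention is the sign tracking in parts (1) and (3), where the simultaneous reversal of the two binomials appearing in numerator and denominator must be matched consistently; apart from that, the verification is routine.
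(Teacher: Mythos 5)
Your verification is correct and is exactly the routine computation the paper has in mind — the paper itself offers no written proof, merely noting that all three parts "are checked by an easy computation." All your algebraic steps (the sign reversal in (1), the limit evaluations in (2) with the correct role of the hypothesis $a\neq 0$, and the clearing of negative exponents in (3)) check out.
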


\begin{adf} \label{df:R}
	Let $\lambda$ be a nonempty partition and let $(k,l)$ be a corner in the Young diagram of $\lambda$. Let
	 $$R_{\lambda,(k,l)}(q,t):=
	  \prod_{d=0}^{l-1} U_{\lambda,(k,d)}(q,t)
	 \cdot
	 \prod_{s=0}^{k-1} W_{\lambda,(s,l)}(q,t) \,.$$
\end{adf}
\begin{pro} \label{pro:1/x}
	Let $\lambda$ be a nonempty partition and $(k,l)$ be its corner.
	We have
	$$R_{\lambda,(k,l)}(q,t)=q^k t^l \cdot R_{\lambda,(k,l)}(q^{-1},t^{-1}) \,.$$
\end{pro}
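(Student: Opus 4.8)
The plan is to prove the identity factor by factor, reducing it entirely to the scaling relations recorded in Proposition \ref{pro:lim}(3). By Definition \ref{df:R}, the function $R_{\lambda,(k,l)}(q,t)$ is a product of exactly $l$ factors of type $U$, namely $U_{\lambda,(k,d)}(q,t)$ for $d=0,\dots,l-1$, together with exactly $k$ factors of type $W$, namely $W_{\lambda,(s,l)}(q,t)$ for $s=0,\dots,k-1$. So the first thing I would record is simply the count of the two kinds of factors: $l$ of the $U$-type and $k$ of the $W$-type.

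Next I would invoke the third part of Proposition \ref{pro:lim}, which gives $U_{a,b}(q,t)=t\,U_{a,b}(q^{-1},t^{-1})$ and $W_{a,b}(q,t)=q\,W_{a,b}(q^{-1},t^{-1})$ for every pair of non-negative integers $(a,b)$. The one point that deserves to be stated explicitly — easy, but the only thing requiring a moment's thought — is that the subscripts $a_{\lambda,\bullet}$ and $b_{\lambda,\bullet}$ are combinatorial data read off the Young diagram of $\lambda$ and carry no dependence on the variables $q,t$. Consequently, substituting $q\mapsto q^{-1}$ and $t\mapsto t^{-1}$ in a factor $U_{\lambda,(k,d)}$ leaves its arm and leg indices untouched, so the relation above applies verbatim with $a=a_{\lambda,(k,d)}$, $b=b_{\lambda,(k,d)}$, and likewise for each $W$-factor.

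Applying this per factor, each of the $l$ $U$-factors contributes a scalar $t$ and each of the $k$ $W$-factors contributes a scalar $q$. Pulling these scalars to the front produces the prefactor $q^k t^l$, while the remaining product of the $U_{\lambda,(k,d)}(q^{-1},t^{-1})$ and $W_{\lambda,(s,l)}(q^{-1},t^{-1})$ is, again by Definition \ref{df:R}, exactly $R_{\lambda,(k,l)}(q^{-1},t^{-1})$. This yields the claimed identity $R_{\lambda,(k,l)}(q,t)=q^k t^l\cdot R_{\lambda,(k,l)}(q^{-1},t^{-1})$. I expect no genuine obstacle: the statement is pure bookkeeping on top of the per-factor scaling, and the only substantive remark is the invariance of the arm/leg subscripts under inversion of the variables.
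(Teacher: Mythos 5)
Your proof is correct and is exactly the argument the paper intends: the paper's proof consists of the single line ``It follows directly from Proposition \ref{pro:lim} (3),'' and your factor-by-factor bookkeeping (each of the $l$ factors of type $U$ contributing $t$ and each of the $k$ factors of type $W$ contributing $q$) is precisely the omitted computation.
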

\begin{proof}
	It follows directly from Proposition \ref{pro:lim} (3).
\end{proof}

Note that the function $R_{\lambda,(k,l)}$ is the same as $\tilde{r}_{\lambda,(k,l)}$ from Section \ref{s:pushK}, which in turn is equal to $r_{\lambda,(k,l)}$ rescaled by the factor $q^kt^l$. We have
$$
	\tilde{r}_{\lambda,(k,l)}=R_{\lambda,(k,l)}(q,t) \,, \qquad r_{\lambda,(k,l)} = \frac{1}{q^k t^l}\cdot R_{\lambda,(k,l)}(q,t)
$$

The remaining part of the appendix is devoted to proving some technical results about the function $R_{\lambda,(k,l)}$, which are used throughout the paper.

\begin{cor}\label{cor:Apinverse}
	Consider the situation as in Proposition \ref{pro:1/x}. Let $\tau:\KTh_\TT(pt)\to\KTh_\TT(pt)$ be a map given by $\tau(t)=t^{-1}$ and $\tau(q)=q^{-1}$. We have
$$	\tau(\tilde{r}_{\lambda,(k,l)}\big)=q^{-k}t^{-l}\cdot\tilde{r}_{\lambda,(k,l)}$$
\end{cor}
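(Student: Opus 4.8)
The plan is to recognize that this corollary is essentially an immediate reformulation of Proposition \ref{pro:1/x}, so the proof should be very short. The only conceptual point is to observe how the ring automorphism $\tau$ acts on the rational function $\tilde{r}_{\lambda,(k,l)}$.

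First I would note that $\tau$ is the ring homomorphism inverting both coordinate characters, and since $q,t$ are units it extends uniquely to the fraction field $\ZZ(q,t)$. Under this extension, applying $\tau$ to any rational function $F(q,t)$ simply substitutes $q\mapsto q^{-1}$ and $t\mapsto t^{-1}$, so that $\tau\big(F(q,t)\big)=F(q^{-1},t^{-1})$. In particular, using the identification $\tilde{r}_{\lambda,(k,l)}=R_{\lambda,(k,l)}(q,t)$ recorded just before the corollary, I would write
$$\tau\big(\tilde{r}_{\lambda,(k,l)}\big)=R_{\lambda,(k,l)}(q^{-1},t^{-1})\,.$$

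Next I would invoke Proposition \ref{pro:1/x}, which states that $R_{\lambda,(k,l)}(q,t)=q^k t^l\cdot R_{\lambda,(k,l)}(q^{-1},t^{-1})$. Solving this relation for $R_{\lambda,(k,l)}(q^{-1},t^{-1})$ yields
$$R_{\lambda,(k,l)}(q^{-1},t^{-1})=q^{-k}t^{-l}\cdot R_{\lambda,(k,l)}(q,t)=q^{-k}t^{-l}\cdot\tilde{r}_{\lambda,(k,l)}\,,$$
which is precisely the claimed identity. There is no real obstacle here: the combinatorial content is entirely carried by the symmetry property in Proposition \ref{pro:1/x} (itself a direct consequence of Proposition \ref{pro:lim}(3)), and the corollary merely repackages that symmetry in terms of the involution $\tau$ used in the proof of Proposition \ref{pro:duality}. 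The main thing to get right is simply the bookkeeping of which variable goes where under $\tau$.
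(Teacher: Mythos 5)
Your argument is correct and is exactly the (implicit) one the paper intends: the corollary is an immediate restatement of Proposition \ref{pro:1/x} via the identification $\tilde{r}_{\lambda,(k,l)}=R_{\lambda,(k,l)}(q,t)$ and the observation that $\tau$ acts by substituting $q\mapsto q^{-1}$, $t\mapsto t^{-1}$. The paper gives no separate proof for this corollary, and your two-line derivation supplies precisely the missing bookkeeping.
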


\begin{pro} \label{pro:Ap1}
	Let $\lambda$ be a nonempty partition and let $(k_1,l_1),\dots,(k_N,l_N)$ be its corners sorted from the uppermost one to the lowest one. Set $l_{N+1}=-1$. Then
	$$\lim_{q\to 0}R_{\lambda,(k_i,l_i)}(q,t)=\sum_{i=l_{i-1}+1}^{l_{i}} t^i \,. $$
\end{pro}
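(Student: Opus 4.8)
The plan is to evaluate the limit factor by factor in the product defining $R_{\lambda,(k_i,l_i)}$ (Definition \ref{df:R}), using the elementary one-variable limits recorded in Proposition \ref{pro:lim}(2) together with a combinatorial description of the arm lengths of the boxes that occur. Writing $(k,l)=(k_i,l_i)$, recall
\[
R_{\lambda,(k,l)}(q,t)=\prod_{d=0}^{l-1}U_{\lambda,(k,d)}(q,t)\cdot\prod_{s=0}^{k-1}W_{\lambda,(s,l)}(q,t).
\]
Each factor $W_{\lambda,(s,l)}$ comes from a box in the same row as the corner and strictly to its left, hence has arm length $a_{\lambda,(s,l)}=k-s\ge 1$; by Proposition \ref{pro:lim}(2) every such factor tends to $1$ as $q\to 0$. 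Thus the whole limit is carried by the $U$-factors, and the first task is to decide, for each box $(k,d)$ with $0\le d<l$ strictly below the corner, whether its arm length is zero or positive: Proposition \ref{pro:lim}(2) gives $\lim_{q\to 0}U_{\lambda,(k,d)}=t$ exactly when $a_{\lambda,(k,d)}\ne 0$, whereas for $a_{\lambda,(k,d)}=0$ the factor equals $U_{0,b}(q,t)=\tfrac{1-t^{b+1}}{1-t^{b}}$ with $b=b_{\lambda,(k,d)}=l-d$, which is already independent of $q$.

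The heart of the argument is the combinatorial claim that the box $(k,d)$ (with $d<l$) has arm length zero precisely when $l_{i+1}<d\le l_i$, where we set $l_{N+1}:=-1$. I expect this to be the main obstacle, since it requires translating ``arm length zero'' into a statement about column heights. Concretely, $a_{\lambda,(k,d)}=0$ means $(k,d)$ is the rightmost box of its row, equivalently that the column $k+1$ does not reach row $d$. Now the outer corners of $\lambda$ sit exactly atop those columns at which the height strictly decreases, so the columns strictly to the right of the corner column $k_i$ and up to the next corner column $k_{i+1}$ all share the common height $l_{i+1}+1$; in particular column $k_i+1$ has height $l_{i+1}+1$ and therefore reaches row $d$ if and only if $d\le l_{i+1}$. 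Comparing this with the requirement $d\le l_i$ (so that $(k_i,d)$ itself lies in the diagram) yields the claim, with the extremal cases $i=N$ (column $k_N+1$ absent, consistent with $l_{N+1}=-1$) and $k_i=0$ (empty $W$-product) absorbed automatically.

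Granting the claim, the remaining computation is routine telescoping. The boxes $(k,d)$ with $0\le d\le l_{i+1}$ have positive arm and contribute $t^{\,l_{i+1}+1}$ in the limit; the boxes with $l_{i+1}<d\le l_i-1$ have zero arm and leg $b=l_i-d$, so their combined limit is $\prod_{e=1}^{\,l_i-l_{i+1}-1}\tfrac{1-t^{e+1}}{1-t^{e}}=\tfrac{1-t^{\,l_i-l_{i+1}}}{1-t}$. Multiplying the two contributions gives
\[
\lim_{q\to 0}R_{\lambda,(k_i,l_i)}(q,t)=t^{\,l_{i+1}+1}\cdot\frac{1-t^{\,l_i-l_{i+1}}}{1-t}=\sum_{j=l_{i+1}+1}^{l_i}t^{\,j},
\]
which is the asserted formula.
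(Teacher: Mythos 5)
Your proof is correct and follows essentially the same route as the paper's: kill the $W$-factors (arm length $\geq 1$, limit $1$), note that the $U$-factors with nonzero arm each contribute $t$ while those with zero arm are $q$-independent and telescope, and multiply. One remark: you write the answer with $l_{i+1}$ (next corner down), whereas the paper's statement and proof use $l_{i-1}$; since the paper sets $l_{N+1}=-1$ and the application in Lemma \ref{lem:limind3} needs the ranges $[l_{i+1}+1,\,l_i]$ to partition $[0,l_1]$, the paper's index is a typo and your version, together with your explicit justification that column $k_i+1$ has height $l_{i+1}+1$, is the intended and correct one.
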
 
\begin{proof}
	By definition
	\begin{align*}
		\lim_{q\to 0} R_{\lambda,(k_i,l_i)}(q,t) 
		&=\lim_{q\to 0}\prod_{s=0}^{k_i-1} W_{\lambda,(s,l_i)}(q,t)
		\cdot
		\lim_{q\to 0}\prod_{s=0}^{l_{i-1}} U_{\lambda,(k_i,s)}(q,t)
		\cdot
		\lim_{q\to 0}\prod_{s=l_{i-1}+1}^{l_i-1} U_{\lambda,(k_i,s)}(q,t)
\end{align*}
	Proposition \ref{pro:lim} (2) implies that the first limit is equal to $1$ and the second to $t^{l_{i-1}+1}$. The third rational function does not depend on the variable $q$. Thus
	\[
		\lim_{q\to 0} R_{\lambda,(k_i,l_i)}(q,t) =  t^{l_{i-1}+1}\cdot \prod_{s=1}^{l_{i}-l_{i-1}-1} U_{1,s}(q,t) 
		=t^{l_{i-1}+1}\cdot\frac{1-t^{l_{i}-l_{i-1}}}{1-t}=\sum_{i=l_{i-1}+1}^{l_{i}} t^i\,. \qedhere
	\]
\end{proof}	
\begin{cor} \label{cor:Aplim1}
	Consider the situation as in Proposition \ref{pro:Ap1}. Then
	$$\lim_{q\to 0}\tilde{r}_{\lambda,(k_i,l_i)}=\sum_{i=l_{i-1}+1}^{l_{i}} t^i \,. $$
\end{cor}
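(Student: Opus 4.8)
The plan is to recognize that this corollary is an immediate translation of Proposition \ref{pro:Ap1} into the notation $\tilde{r}_{\lambda,(k,l)}$ used in the main body of the paper. The only ingredient needed is the identification, recorded just after Proposition \ref{pro:1/x}, that the rescaled quotient of Euler classes coincides with the combinatorial rational function, namely
$$\tilde{r}_{\lambda,(k,l)}=R_{\lambda,(k,l)}(q,t)\,.$$

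First I would recall this identity. It was established by factoring out $q^k t^l$ from $r_{\lambda,(k,l)}$ and matching the resulting product of $W$- and $U$-factors with the definition of $R_{\lambda,(k,l)}$ in Definition \ref{df:R}. With this in hand, substituting into the left-hand side of the claimed formula gives
$$\lim_{q\to 0}\tilde{r}_{\lambda,(k_i,l_i)}=\lim_{q\to 0}R_{\lambda,(k_i,l_i)}(q,t)\,,$$
and the right-hand side was already computed in Proposition \ref{pro:Ap1} to be $\sum_{i=l_{i-1}+1}^{l_i} t^i$, which is exactly the desired expression.

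Since the substitution is a definitional identity and the relevant limit has already been evaluated in Proposition \ref{pro:Ap1}, I do not expect any genuine obstacle: the corollary is essentially a restatement. The only point worth verifying is that the convention $l_{N+1}=-1$ and the ordering of corners from uppermost to lowest agree verbatim between the two statements, so that the summation ranges coincide; this is guaranteed by the opening phrase \emph{``situation as in Proposition \ref{pro:Ap1}''}.
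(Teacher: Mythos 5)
Your proposal is correct and matches the paper's (implicit) argument exactly: the corollary follows immediately from the identity $\tilde{r}_{\lambda,(k,l)}=R_{\lambda,(k,l)}(q,t)$ recorded after Proposition \ref{pro:1/x} together with the limit computed in Proposition \ref{pro:Ap1}. Nothing further is needed.
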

	\begin{pro} \label{pro:Ap2}
		Let $\lambda$ be a nonempty partition and $\tilde{\lambda}$ the partition $\lambda$ without the first column. Suppose that $(k,l)$ is a corner of $\lambda$ such that $k\neq 0$. Then the limit
		$$\lim_{q \to \infty} \big(R_{\lambda,(k,l)}(q,t)-qR_{\tilde{\lambda},(k-1,l)}(q,t)\big) $$
		exists.
	\end{pro}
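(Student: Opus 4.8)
The plan is to reduce the statement to a clean multiplicative factorization of $R_{\lambda,(k,l)}$ that isolates the single factor coming from the first column, and then to compare the orders of growth in $q$ of the two resulting pieces.

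First I would record how arm and leg lengths behave under deletion of the first column. Writing $\tilde{\lambda}=(\lambda_2,\dots,\lambda_l)$, column $i$ of $\lambda$ for $i\ge 1$ becomes column $i-1$ of $\tilde{\lambda}$ with unchanged height, so a box $(i,j)$ with $i\ge 1$ becomes the box $(i-1,j)$ of $\tilde{\lambda}$. A direct count then shows that both its arm and its leg are preserved: the boxes strictly to its right and strictly above it are literally the same boxes, merely reindexed. Since $k\ge 1$, this gives $U_{\lambda,(k,d)}=U_{\tilde{\lambda},(k-1,d)}$ for $0\le d\le l-1$ and $W_{\lambda,(s,l)}=W_{\tilde{\lambda},(s-1,l)}$ for $1\le s\le k-1$. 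Substituting these identities into Definition \ref{df:R} and peeling off the $s=0$ factor yields the factorization
$$R_{\lambda,(k,l)}(q,t)=W_{\lambda,(0,l)}(q,t)\cdot R_{\tilde{\lambda},(k-1,l)}(q,t).$$

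Granting this, the difference in question becomes
$$R_{\lambda,(k,l)}-qR_{\tilde{\lambda},(k-1,l)}=\big(W_{\lambda,(0,l)}-q\big)\cdot R_{\tilde{\lambda},(k-1,l)},$$
so it remains to check that this product stays bounded as $q\to\infty$. The box $(0,l)$ lies in row $l$, whose rightmost box is the corner $(k,l)$; hence its arm length equals $k$, and writing $b=b_{\lambda,(0,l)}$ a direct computation gives
$$W_{\lambda,(0,l)}-q=\frac{q^{k+1}-t^{b}}{q^{k}-t^{b}}-q=\frac{t^{b}(q-1)}{q^{k}-t^{b}},$$
a rational function of $q$-degree $1-k\le 0$. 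On the other hand $R_{\tilde{\lambda},(k-1,l)}$ is a product of $l$ factors of type $U$, each of $q$-degree $0$, and $k-1$ factors of type $W$, each of $q$-degree $1$, so it has $q$-degree $k-1$. The product therefore has $q$-degree $0$, hence a finite limit as $q\to\infty$, which is exactly the claim (and in fact identifies the limit as a ratio of leading $q$-coefficients).

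The hard part will be the bookkeeping in the first step: one must verify carefully that deleting the leftmost column leaves the arm and the leg of every box in the remaining columns untouched, and in particular that $(k-1,l)$ is again a corner of $\tilde{\lambda}$, so that the only surviving new factor is precisely $W_{\lambda,(0,l)}$. Once the factorization is in place, the convergence is a routine degree count using the explicit shape of the functions $U_{a,b}$ and $W_{a,b}$ from Proposition \ref{pro:lim}.
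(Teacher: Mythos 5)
Your proof is correct. The factorization $R_{\lambda,(k,l)}=W_{\lambda,(0,l)}\cdot R_{\tilde{\lambda},(k-1,l)}$ and the identity $W_{k,b}-q=\tfrac{t^{b}(q-1)}{q^{k}-t^{b}}$ are exactly the paper's starting point (the paper simply asserts the factorization ``by definition,'' so your careful check that deleting the first column preserves arms and legs is a welcome, if routine, supplement). Where you diverge is in how boundedness at $q\to\infty$ is established: the paper invokes the inversion symmetry $R_{\tilde{\lambda},(k-1,l)}(q,t)=q^{k-1}t^{l}R_{\tilde{\lambda},(k-1,l)}(q^{-1},t^{-1})$ (Proposition \ref{pro:1/x}) to convert the $q\to\infty$ limit into the already-computed $q\to 0$ limit of Proposition \ref{pro:Ap1}, whereas you count $q$-degrees directly: each $U$-factor has degree $0$, each $W$-factor degree $1$, so $R_{\tilde{\lambda},(k-1,l)}$ has degree $k-1$, which cancels the degree $1-k$ of $W_{\lambda,(0,l)}-q$. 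Your degree count is valid (including the edge cases $a=0$, where $U_{0,b}$ is constant in $q$ and $W_{0,b}$ still has degree $1$) and is more self-contained, not relying on Propositions \ref{pro:1/x} and \ref{pro:Ap1}; the paper's route buys a reuse of established lemmas and an explicit evaluation of the limit rather than just its existence, which is harmless here since only existence is claimed.
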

	\begin{proof}
		By definition
		$$R_{\lambda,(k,l)}(q,t)=R_{\tilde{\lambda},(k-1,l)}(q,t)\cdot W_{k,b_{\lambda,(0,l)}}(q,t)\,. $$
		Let $b:=b_{\lambda,(0,l)}$. It follows that 
		\begin{align*}
			R_{\lambda,(k,l)}(q,t)-qR_{\tilde{\lambda},(k-1,l)}(q,t)&=
			R_{\tilde{\lambda},(k-1,l)}(q,t)\cdot (W_{k,b}-q)\\
			&=
			R_{\tilde{\lambda},(k-1,l)}(q,t)\cdot \frac{t^{b}\cdot(q-1)}{q^k-t^b}\\
			&= R_{\tilde{\lambda},(k-1,l)}(q^{-1},t^{-1}) \cdot \frac{q^{k-1}t^{b+l}\cdot(q-1)}{q^k-t^b}\,,
		\end{align*}
		where the last equality follows from Proposition \ref{pro:1/x}. Proposition \ref{pro:Ap1} implies that the limit
		$$\lim_{q\to \infty} R_{\tilde{\lambda},(k-1,l)}(q^{-1},t^{-1}) $$
		exists. The limit of the second factor also exists (it is equal to $t^{b+l}$).
	\end{proof}
\begin{cor} \label{cor:Aplim2}
	Consider the situation as in Proposition \ref{pro:Ap2}. Then the limit
	$$\lim_{q \to \infty} \big(\tilde{r}_{\lambda,(k,l)}-q\tilde{r}_{\tilde{\lambda},(k-1,l)}\big) $$
	exists.
\end{cor}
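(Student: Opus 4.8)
The plan is to observe that this corollary is nothing more than a translation of Proposition \ref{pro:Ap2} through the dictionary set up in Section \ref{s:pushK}. Recall from that section the identification $\tilde{r}_{\lambda,(k,l)} = R_{\lambda,(k,l)}(q,t)$, valid for any nonempty partition $\lambda$ and any corner $(k,l)$. Applying it both to $\lambda$ at its corner $(k,l)$ and to $\tilde{\lambda}$ at its corner $(k-1,l)$, I would rewrite
$$\tilde{r}_{\lambda,(k,l)} - q\,\tilde{r}_{\tilde{\lambda},(k-1,l)} = R_{\lambda,(k,l)}(q,t) - q\,R_{\tilde{\lambda},(k-1,l)}(q,t)\,.$$
The existence of the limit as $q \to \infty$ of the right-hand side is precisely the statement of Proposition \ref{pro:Ap2}, so the corollary follows at once.

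The only point needing a moment of care is the matching of hypotheses. The assumption $k \neq 0$ guarantees that removing the first column of $\lambda$ carries the corner $(k,l)$ to the corner $(k-1,l)$ of $\tilde{\lambda}$, so that $\tilde{r}_{\tilde{\lambda},(k-1,l)}$ is indeed defined and the substitution above is legitimate; this is exactly the configuration assumed in Proposition \ref{pro:Ap2}. I do not expect any genuine obstacle here, as the substantive work has already been done: the leading $q$-behaviour of $R_{\lambda,(k,l)}$ was extracted in the proof of Proposition \ref{pro:Ap2} by factoring out the single $W$-term and invoking the palindromy relation of Proposition \ref{pro:1/x}. Consequently the present corollary is purely a bookkeeping restatement in terms of the functions $\tilde{r}$ that enter the localization formula of Corollary \ref{cor:LRR2}.
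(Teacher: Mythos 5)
Your proposal is correct and matches the paper's (implicit) argument exactly: the paper records the identification $\tilde{r}_{\lambda,(k,l)}=R_{\lambda,(k,l)}(q,t)$ just before the corollaries and treats Corollary \ref{cor:Aplim2} as an immediate restatement of Proposition \ref{pro:Ap2}, offering no separate proof. Your remark that $k\neq 0$ ensures $(k-1,l)$ is a corner of $\tilde{\lambda}$ is the right hypothesis check, so nothing is missing.
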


\begin{pro} \label{pro:Ap3}
	Let $\lambda$ be a nonempty partition and $(0,l)$ be its corner. Then the limit
	$$\lim_{q \to \infty}R_{\lambda,(0,l)}(q,t)$$
	exists.
\end{pro}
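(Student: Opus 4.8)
The plan is to exploit the fact that for a corner with $k=0$ the function $R_{\lambda,(0,l)}$ is built only from $U$-factors. Indeed, using the convention $\prod_{s=0}^{-1}(\dots)=1$, Definition \ref{df:R} gives
$$R_{\lambda,(0,l)}(q,t)=\prod_{d=0}^{l-1} U_{\lambda,(0,d)}(q,t),$$
a product of finitely many rational functions in $q$. Since a finite product of functions each of which admits a limit as $q\to\infty$ again admits a limit, it suffices to check that $\lim_{q\to\infty}U_{\lambda,(0,d)}(q,t)$ exists for each $d\in\{0,\dots,l-1\}$.

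First I would distinguish the factors by the arm length $a:=a_{\lambda,(0,d)}$ of the box $(0,d)$. When $a\neq 0$, Proposition \ref{pro:lim}(2) applies directly and gives $\lim_{q\to\infty}U_{\lambda,(0,d)}(q,t)=1$. The only case not covered there is $a=0$, which is precisely the situation of the first-column boxes lying on the staircase leading up to the corner; but this case is settled by inspection, since with $b:=b_{\lambda,(0,d)}$ one has
$$U_{0,b}(q,t)=\frac{q^0-t^{b+1}}{q^0-t^b}=\frac{1-t^{b+1}}{1-t^b},$$
a function with no $q$-dependence, whose limit as $q\to\infty$ therefore trivially exists. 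Combining the two cases, every factor has a limit, and hence so does the product, which proves the claim.

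There is essentially no genuine obstacle here; the only subtlety is that Proposition \ref{pro:lim}(2) is stated only for $a\neq 0$, so one must treat the arm-zero boxes by hand, the key observation being simply that for those boxes the factor carries no $q$ at all. As an alternative, slicker route that mirrors the proof of Proposition \ref{pro:Ap2}, one could instead invoke the functional equation of Proposition \ref{pro:1/x}, which for $k=0$ reads $R_{\lambda,(0,l)}(q,t)=t^l\cdot R_{\lambda,(0,l)}(q^{-1},t^{-1})$; then
$$\lim_{q\to\infty}R_{\lambda,(0,l)}(q,t)=t^l\cdot\lim_{u\to 0}R_{\lambda,(0,l)}(u,t^{-1}),$$
and the right-hand limit exists by Proposition \ref{pro:Ap1} applied with $t$ replaced by $t^{-1}$. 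Either way the existence of the limit is immediate, and—unlike in Propositions \ref{pro:Ap1} and \ref{pro:Ap2}—no cancellation or $q$-degree bookkeeping is required.
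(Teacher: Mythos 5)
Your proposal is correct, and your primary argument is genuinely different from the paper's. The paper proves Proposition \ref{pro:Ap3} in one line by the route you label as the ``alternative'': it applies the functional equation of Proposition \ref{pro:1/x} with $k=0$ to get $R_{\lambda,(0,l)}(q,t)=t^l R_{\lambda,(0,l)}(q^{-1},t^{-1})$ and then quotes Proposition \ref{pro:Ap1} for the $q^{-1}\to 0$ limit — exactly your second paragraph. Your main route instead unwinds Definition \ref{df:R} directly: for $k=0$ only the $U$-factors survive, and each factor $U_{a,b}$ has a $q\to\infty$ limit, by Proposition \ref{pro:lim}(2) when $a\neq 0$ and trivially when $a=0$ since then the factor is $q$-free. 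This is sound; the one point worth making explicit is that for the arm-zero boxes the denominator $1-t^{b}$ is nonzero, which holds because every box $(0,d)$ with $d<l$ lies strictly below the corner $(0,l)$ in the same column and hence has leg length $b\geq 1$. The trade-off between the two arguments is mild: the direct route is self-contained and, if pushed slightly further, would even produce the value of the limit (a product of $1$'s and of the $q$-free factors $\frac{1-t^{b+1}}{1-t^{b}}$), whereas the paper's route is shorter and reuses the already-established $q\to 0$ computation, keeping all the case analysis confined to Proposition \ref{pro:Ap1}. Either is acceptable here, since only existence of the limit is needed.
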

\begin{proof}
	By Proposition \ref{pro:1/x} we have
	$$R_{\lambda,(0,l)}(q,t)=t^lR_{\lambda,(0,l)}(q^{-1},t^{-1})\,.$$
	The limit of the right hand side exists thanks to Proposition \ref{pro:Ap1}.
\end{proof}
\begin{cor} \label{cor:Aplim3}
	Consider the situation as in Proposition \ref{pro:Ap3}. Then the limit
	$$\lim_{q \to \infty} \tilde{r}_{\lambda,(0,l)}(q,t) $$
	exists.
\end{cor}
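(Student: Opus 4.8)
The plan is to reduce the assertion directly to Proposition \ref{pro:Ap3} by invoking the identification of the rescaled Euler-class quotient with the purely combinatorial rational function $R_{\lambda,(k,l)}$. Recall that in the discussion following Proposition \ref{pro:1/x} one records, for a nonempty partition $\lambda$ and a corner $(k,l)$, the equality
$$\tilde{r}_{\lambda,(k,l)}=R_{\lambda,(k,l)}(q,t)\in\ZZ(q,t)\,.$$
Applying this to the corner $(0,l)$ appearing in the present statement gives $\tilde{r}_{\lambda,(0,l)}=R_{\lambda,(0,l)}(q,t)$, so that the limit in question is literally $\lim_{q\to\infty}R_{\lambda,(0,l)}(q,t)$. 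The existence of this limit is exactly the content of Proposition \ref{pro:Ap3}, and the corollary follows at once.

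For completeness I would recall the one non-formal ingredient, namely why $\lim_{q\to\infty}R_{\lambda,(0,l)}(q,t)$ exists. Specializing the functional equation of Proposition \ref{pro:1/x} to the corner $(0,l)$, and using that the prefactor $q^{k}t^{l}$ collapses to $t^{l}$ when $k=0$, one obtains
$$R_{\lambda,(0,l)}(q,t)=t^{l}\cdot R_{\lambda,(0,l)}(q^{-1},t^{-1})\,.$$
Under the substitution $q\mapsto q^{-1}$ the limit $q\to\infty$ turns into the limit $q\to 0$ (evaluated at $t^{-1}$), and $\lim_{q\to 0}R_{\lambda,(0,l)}(q,t)$ is computed in Proposition \ref{pro:Ap1} to be a finite sum of powers of $t$, in particular a genuine finite limit. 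Hence the right-hand side converges as $q\to\infty$, and with it $R_{\lambda,(0,l)}(q,t)$ itself.

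The main point to watch is bookkeeping rather than a genuine obstacle: one must use the identification $\tilde{r}_{\lambda,(k,l)}=R_{\lambda,(k,l)}$ in its correct normalization and carefully track the prefactor in Proposition \ref{pro:1/x}, which is precisely where the hypothesis $k=0$ enters to kill the $q^{k}$ factor. All of the substantive combinatorics — the evaluation of the $q\to 0$ limit in Proposition \ref{pro:Ap1} and the $q\leftrightarrow q^{-1}$ symmetry in Proposition \ref{pro:1/x} — has already been established, so the statement is a formal consequence and no further estimates are required.
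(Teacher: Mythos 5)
Your proof is correct and follows the paper's own route exactly: the corollary is the identification $\tilde{r}_{\lambda,(0,l)}=R_{\lambda,(0,l)}(q,t)$ combined with Proposition \ref{pro:Ap3}, whose proof in the paper is precisely the specialization of Proposition \ref{pro:1/x} to $k=0$ followed by the $q\to 0$ limit from Proposition \ref{pro:Ap1}. Nothing further is needed.
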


\section{Declarations} The authors have no conflict of
interest to declare that are relevant to this article.

\bibliographystyle{alpha}
\bibliography{Hilb}

\end{document}